\newcommand{\Z}{{\mathbb{Z}}}
\newcommand{\R}{{\mathbb{R}}}
\newcommand{\C}{{\mathbb{C}}}
\newcommand{\mR}{{\mathcal{R}}}
\newcommand{\HH}{{\mathbb{H}}}
\DeclareMathOperator{\rank}{rank}
\newcommand{\be}{\begin{equation}}
	\newcommand{\ee}{\end{equation}}
\DeclareMathOperator{\Arg}{Arg}
\newcommand{\defeq}{\vcentcolon=}
\newcommand{\floor}[1]{\left\lfloor #1 \right\rfloor}
\newcommand{\ceil}[1]{\left\lceil #1 \right\rceil}
\renewcommand{\(}{\left(}
\renewcommand{\)}{\right)}
\newcommand{\Mod}[1]{\ (\mathrm{mod}\ #1)}
\DeclareMathOperator{\sgn}{sgn}
\newtheorem{theorem}{Theorem}
\newtheorem{lemma}[theorem]{Lemma}
\newtheorem{corollary}[theorem]{Corollary}
\newtheorem{proposition}[theorem]{Proposition}
\newtheorem{condition}[theorem]{Condition}
\theoremstyle{remark}
\newtheorem*{remark}{Remark}
\numberwithin{equation}{section}
\numberwithin{theorem}{section}
\numberwithin{lemma}{section}
\numberwithin{proposition}{section} 
\numberwithin{example}{section}
\numberwithin{definition}{section}
\numberwithin{corollary}{section}
\numberwithin{condition}{section}
\numberwithin{notation}{section}
\numberwithin{claim}{section}
\numberwithin{table}{section}
\author{Qihang Sun}
\title{Vanishing properties of Kloosterman sums and Dyson's conjectures}
\date{\today}
\begin{document}

	\begin{abstract}
		In a previous paper \cite{QihangExactFormula}, the author proved the exact formulae for ranks of partitions modulo each prime $p\geq 5$. In this paper, for $p=5$ and $7$, we prove special vanishing properties of the Kloosterman sums appearing in the exact formulae. These vanishing properties imply a new proof of Dyson's rank conjectures. Specifically, we give a new proof of Ramanujan's congruences $p(5n+4)\equiv 0\Mod 5$ and $p(7n+5)\equiv 0\Mod  7$. 
	\end{abstract}

	\maketitle

	\section{Introduction}

	Let $p(n)$ denote the integer partition function. Ramanujan obtained the famous congruence properties of $p(n)$:
	\begin{equation}\label{Ramanujan congruences}
		p(5n+4)\equiv 0\Mod 5,\quad p(7n+5)\equiv 0\Mod 7,\quad p(11n+6)\equiv 0\Mod{11}. 
	\end{equation}  
	In 1944, Dyson \cite{Dyson} defined the rank of a partition and conjectured a beautiful explanation for Ramanujan's congruences. Suppose $\Lambda=\{\Lambda_1\geq \Lambda_2\geq \cdots\geq \Lambda_\kappa\}$ is a partition of $n$, i.e. $\sum_{j=1}^\kappa \Lambda_j=n$. Then the rank of $\Lambda$ is defined by
	\[\rank(\Lambda)\defeq\Lambda_1-\kappa.\]
	Let the quantities $N(m,n)$ and $N(a,b;n)$ be defined by
	\begin{equation}
		N(m,n)\defeq \#\{\Lambda \text{ is a partition of }n: \rank \Lambda=m\}
	\end{equation} 
	and 
	\begin{equation}
		N(a,b;n)\defeq \#\{\Lambda \text{ is a partition of }n: \rank \Lambda\equiv a\Mod b\}.
	\end{equation}
	Let $q=\exp(2\pi i z)=e(z)$ for $z\in \HH$ and $w$ be a root of unity. The generating function of $N(m,n)$ is given by (see e.g. \cite[p. 245]{BrmOno2006ivt})
	\begin{equation}\label{rankGeneratingFunction}
		\mR(w;q)\defeq 1+\sum_{n=1}^\infty\sum_{m=-\infty}^\infty N(m,n) w^mq^n=1+\sum_{n=1}^\infty\frac{q^{n^2}}{(wq;q)_n(w^{-1}q;q)_n},
	\end{equation}
	where $(a;q)_n\defeq\prod_{j=0}^{n-1}(1-aq^j)$. 
	Dyson made the following conjectures which were proved by Atkin and Swinnerton-Dyer in 1953. 
	\begin{theorem}[{\cite{AtkinSDrank}}]
		\label{Dyson's conjectures in ASD}
		For all $n\geq 0$, we have the following identities: 
		\begin{align*}
			N(1,5;5n+1)&=N(2,5;5n+1);			\tag{5-1} 		\\
			N(0,5;5n+2)&=N(2,5;5n+2);			\tag{5-2}		 \\
			N(0,5;5n+4)=N(1,5;5n+4)&=N(2,5;5n+4); 	\tag{5-4}\\
			N(2,7;7n)&=N(3,7;7n); 					\tag{7-0}\\
			N(1,7;7n+1)=N(2,7;7n+1)&=N(3,7;7n+1); 	\tag{7-1}\\
			N(0,7;7n+2)&=N(3,7;7n+2); 				\tag{7-2}\\
			N(0,7;7n+3)=N(2,7;7n+3), &\quad \ N(1,7;7n+3)=N(3,7;7n+3); \tag{7-3}\\
			N(0,7;7n+4)=N(1,7;7n+4)&=N(3,7;7n+4); 				\tag{7-4}\\
			N(0,7;7n+5)=N(1,7;7n+5)&=N(2,7;7n+5)=N(3,7;7n+5); 	\tag{7-5}\\
			N(0,7;7n+6)+N(1,7;7n+6)&=N(2,7;7n+6)+N(3,7;7n+6).  	\tag{7-6}
		\end{align*}
	\end{theorem}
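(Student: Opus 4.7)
The plan is to deduce Theorem \ref{Dyson's conjectures in ASD} directly from the exact formulae for $N(a,p;n)$ established in \cite{QihangExactFormula} for $p\in\{5,7\}$. Each such formula expresses the counting function as a convergent Rademacher-type series
\[
N(a,p;n) \;=\; \sum_{c\geq 1} K_{c}(a,p,n)\,\Psi_c(n),
\]
where $\Psi_c(n)$ is an analytic factor (a Bessel-function expression) that depends only on $c$ and $n$, and $K_c(a,p,n)$ is a Kloosterman-type sum twisted by the multiplier system of the mock modular form $\mR(\zeta_p^a;q)$. Each identity in Theorem \ref{Dyson's conjectures in ASD} has the shape $\sum_a c_a\, N(a,p;pn+r)=0$ for some small integer coefficients $c_a$, so after interchanging the two sums it suffices to prove the termwise vanishing
\[
\sum_{a=0}^{p-1} c_a\,K_c(a,p,pn+r) = 0 \qquad\text{for every }c\geq 1.
\]
This reduces the listed identities to a finite list of pure Kloosterman-sum identities, each asserting that an explicit finite exponential sum vanishes.

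To verify each such vanishing I would proceed in two steps. First, for fixed $c$, I would split the range of summation according to the residue of $c$ modulo $p$ and modulo the small auxiliary integers (powers of $2$, $3$) that arise from the eta-multiplier. In most of these residue classes the inner sum over $a$ is a geometric sum in $\zeta_p$ that already collapses to $0$ by orthogonality, leaving only the arithmetic progressions of $c$ for which $pn+r$ meets the parity/divisibility constraints imposed by the multiplier. Second, in the surviving classes, I would exhibit an involution $d\mapsto d'$ on the residues $d\pmod c$ coprime to $c$, typically of the form $d'\equiv -d$ or $d'\equiv u\,d\pmod{pc}$ for a carefully chosen unit $u$, under which the phase of the summand flips sign and forces the Kloosterman sum to cancel with itself.

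The hard part will be producing the correct involution case by case and checking that the combined Dedekind-sum and root-of-unity phase really does flip sign as required. This is the delicate step where $p=5$ and $p=7$ behave specially, and where (notably) $p=11$ must fail; the analysis hinges on how the quadratic residue structure of $p$ interacts with the $24p$-th roots of unity appearing in the multiplier. Once these vanishing properties are in hand, each identity of Theorem \ref{Dyson's conjectures in ASD} follows termwise from the Rademacher expansion. As a final corollary, summing (5-4) over $a\in\{0,\dots,4\}$ and (7-5) over $a\in\{0,\dots,6\}$ converts the equal-distribution statements into $5\mid p(5n+4)$ and $7\mid p(7n+5)$, recovering Ramanujan's congruences in the form advertised in the abstract.
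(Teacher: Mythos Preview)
Your high-level strategy---reduce each identity to a termwise vanishing of Kloosterman sums in the Rademacher expansion---is exactly the one the paper uses.  However, the concrete mechanism you propose for proving those vanishing statements is different from what the paper does, and in one key respect it looks insufficient.

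The paper does not find an involution $d\mapsto d'$ giving pairwise sign cancellation.  Instead it partitions the sum over $d\pmod c^*$ into fibers $V(r,c)=\{d:d\equiv r\pmod{c'}\}$ with $c'=c/p$.  Each fiber has $4$ or $5$ elements for $p=5$ (resp.\ $6$ or $7$ for $p=7$), and the individual summands $P(d)$ land on concentric circles of radii $\csc(j\pi/p)$ with argument differences that the paper computes exhaustively via the Dedekind-sum congruences \eqref{Congru Dedekind theta is or not 3}--\eqref{Congru Dedekind c even}.  The fiber sums then vanish not by a sign flip but by genuine multi-term trigonometric identities (e.g.\ $\cos(\tfrac{\pi}{10})/\sin(\tfrac{2\pi}{5})=\cos(\tfrac{3\pi}{10})/\sin(\tfrac{\pi}{5})$ for $p=5$, and for $p=7$ the identity $\cot(\tfrac{\pi}{7})+\cot(\tfrac{2\pi}{7})-\cot(\tfrac{3\pi}{7})=\sqrt 7$).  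In several residue classes of $c'$ the four or six points are in no sense paired, so a single involution with a phase flip would not suffice.

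The more serious gap is specific to $p=7$.  The exact formula \eqref{main theorem formula, mod 7} has \emph{two} families of Kloosterman sums, $S_{\infty\infty}^{(\ell)}$ and $S_{0\infty}^{(\ell)}$, coming from the cusps $\infty$ and $0$.  When $c'\ell\equiv\pm1\pmod 7$ the sum $S_{\infty\infty}^{(\ell)}(0,7n+5,c,\mu_7)$ does \emph{not} vanish by itself; one must add $2\sqrt 7\,S_{0\infty}^{(\ell)}(0,7n+5,c/7,\mu_7;0)$, and it is only the combination of the six points $P(d_j)$ with a seventh point $Q(B)$ carrying the factor $2\sqrt 7$ that sums to zero (Proposition~\ref{claim 7T+1 Arg diff proposition} and Condition~\ref{Styles six points mod 7, 7Tpm1 case}).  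Your plan, which manipulates only residues $d\pmod c$ within a single Kloosterman sum, has no mechanism for this cross-cusp cancellation, so as written it cannot handle (7-5) or the other $(7\text{-}k)$ cases.
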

	\begin{remark}
		By $N(a,b;n)=N(-a,b;n)$, the identity (5-4) implies 
		\[N(0,5;5n+4)=N(1,5;5n+4)=\cdots=N(4,5;5n+4)=\tfrac 15 p(5n+4)\]
		hence implies the Ramanujan congruence $p(5n+4)\equiv 0\Mod 5$. The identity (7-5) implies 
		\[N(0,7;7n+5)=N(1,7;7n+5)=\cdots=N(6,7;7n+5)=\tfrac 17 p(7n+5)\]
		hence implies the Ramanujan congruence $p(7n+5)\equiv 0\Mod 7$. 
	\end{remark}
	
	The proof of Theorem~\ref{Dyson's conjectures in ASD} in \cite{AtkinSDrank} involves identities of generating functions
	\begin{equation*}
		\sum_{n=0}^{\infty}  \(N(a,p;pn+k)-N(b,p;pn+k)\) x^{pn} \prod_{r=1}^{\infty}(1-x^r)
	\end{equation*}
	for $p=5,7$ and certain choices of the integer $k$. See \cite[Theorem~4 \& Theorem~5]{AtkinSDrank} for details. Recently, Garvan \cite[\S6]{GarvanTransformationDyson2017} gave a new and simplified proof of Dyson's conjectures. For
	\begin{equation*}
		\mathcal{K}_{p,0}(z)=\prod_{n=1}^{\infty}(1-q^{pn})\sum_{n=\ceil{(p^2-1)/24p}}\(\sum_{k=0}^{p-1}N(k,p;pn-\tfrac{p^2-1}{24})\zeta_p^k\)q^n,
	\end{equation*}
	with $\zeta_p\defeq e(\frac 1p)$ and $\mathcal{K}_{p,m}(z)$ defined in \cite[Definition~6.1]{GarvanTransformationDyson2017}, Garvan showed that $\mathcal{K}_{p,0}(z)$ is a weakly holomorphic modular form of weight $1$ on $\Gamma_1(p)$. By the Valence formula, Garvan proved $\mathcal{K}_{5,0}(z)=\mathcal{K}_{7,0}(z)=0$ and hence proved the Dyson's conjectures in \cite[\S6.3]{GarvanTransformationDyson2017}. 
	
	For integers $b>a>0$, denote $A(\frac ab;n)$ as the Fourier coefficient of $\mR(\zeta_b^a;q)$: 
	\[\mR(\zeta_b^a;q)=:1+\sum_{n=1}^\infty A\(\frac ab;n\)q^n. \]
 There is an important equation which explains the relation between $A(\frac ab;n)$ and $N(a,b;n)$:
	\begin{equation}\label{Relation between rank mod and A()}
		bN(a,b;n)=p(n)+\sum_{j=1}^{b-1}\zeta_b^{-aj}A\(\frac jb;n\). 
	\end{equation}
	It is not hard to show that $A(\frac jb;n)\in \R$ and $A(\frac jb;n)=A(1-\frac jb;n)$ for $1\leq j\leq b-1$, because $N(a,b;n)=N(-a,b;n)$ and $\zeta_b^{-aj}+\zeta_b^{-a(b-j)}=2\cos (\frac{\pi a j}b)$. Specifically, if we know the values of $A(\frac jb;n)$ for $1\leq j\leq b-1$, then we know the value of
	\[N(a_1,b;n)-N(a_2,b;n) \quad \text{for any }0\leq a_1,a_2\leq b-1. \]

	Another way of approaching Dyson's conjectures is therefore via the formulae for $A(\frac jb;n)$ when $b=5,7$. In 2009, Bringmann \cite[Theorem~1.1]{BringmannTAMS} proved the asymptotic formula for $A(\frac jb;n)$ when $b\geq 3$ is odd and $0\leq j\leq b-1$. Bringmann used the asymptotic formula when $b=3$ to prove the Andrews-Lewis conjecture about comparing $N(0,3;n)$ and $N(1,3;n)$. 
	In a previous paper \cite{QihangExactFormula}, for each prime $p\geq 5$ and $1\leq \ell\leq p-1$, the author proved that Bringmann's asymptotic formula, when summing up to infinity, is the exact formula for $A(\frac \ell p;n)$ for all $n\geq 0$. For each integer $A$, denote $[A]$ by
	\[0\leq [A]<7:\quad [A]\equiv A\Mod 7.\]
	When $p=5$ or $7$, that exact formula reduces to the following corollary. 
	\begin{theorem}[{\cite[Corollary~2.2]{QihangExactFormula}}]\label{main theorem corollary}
		For every positive integer $n$, when $p=5$ and $1\leq \ell\leq 4$, we have
		\begin{align}\label{main theorem formula, mod 5}
			\begin{split}
				A\(\frac \ell 5;n\)=\frac{2\pi e(-\frac 18)\sin(\frac{\pi\ell}5)}{(24n-1)^{\frac14}} \sum_{c>0:\,5|c}\frac{S_{\infty\infty}^{(\ell)}(0,n,c,\mu_5)}{c} I_{\frac12}\(\frac{4\pi \sqrt{24n-1}}{24c}\); 
			\end{split}
		\end{align}
		when $p=7$ and $1\leq \ell \leq 6$, we have
		\begin{align}\label{main theorem formula, mod 7}
			\begin{split}
				A\(\frac \ell 7;n\)&=\frac{2\pi e(-\frac 18)\sin(\frac{\pi\ell}7)}{(24n-1)^{\frac14}} \sum_{c>0:\,7|c}\frac{S_{\infty\infty}^{(\ell)}(0,n,c,\mu_7)}{c} I_{\frac12}\(\frac{4\pi \sqrt{24n-1}}{24c}\)\\
				&+\frac{4\pi \sin(\frac{\pi\ell}7)}{(24n-1)^{\frac14}}\sum_{\substack{a>0:\,7\nmid a,\\ [a\ell]=1\text{ or }6}}\!\!\!\!\!\frac{S_{0\infty}^{(\ell)}\(0,n,a,\mu_7;0\)}{\sqrt 7\,a }I_{\frac12}\(\frac{4\pi \sqrt{24n-1}}{24\times 7a}\).
			\end{split}
		\end{align}
		Here $S_{\infty\infty}^{(\ell)}(0,n,c,\mu_p)$ for $p=5,7$ and $S_{0\infty}^{(\ell)}\(0,n,a,\mu_7;0\)$ are given in \eqref{S infty infty for mod p}, \eqref{S 0 infty mod 7, +1} and \eqref{S 0 infty mod 7, -1}.  
	\end{theorem}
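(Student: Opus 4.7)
The plan is to deduce Theorem~\ref{main theorem corollary} as a specialization of the exact formula for $A(\frac\ell p;n)$ proved in \cite{QihangExactFormula} for every prime $p\geq 5$. That general formula is built from a Rademacher circle-method / Poincar\'e-series expansion of the harmonic Maass form completing $\mR(\zeta_p^\ell;q)$, and it writes the Fourier coefficient as one Kloosterman--Bessel sum per inequivalent cusp of the underlying congruence subgroup. Since the relevant level-$p$ subgroup has only two classes of cusps when $p$ is prime, namely $\infty$ and $0$, after specialization one expects at most two sums to survive, which is exactly the structure of \eqref{main theorem formula, mod 5} and \eqref{main theorem formula, mod 7}.

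Starting from Bringmann's framework \cite{BringmannTAMS}, I would complete $\mR(\zeta_p^\ell;q)$ by the non-holomorphic Eichler integral of its weight-$\frac32$ theta shadow to obtain a harmonic Maass form of weight $\frac12$, whose principal part at $\infty$ is a scalar multiple of $q^{-1/24}$. Extracting this scalar produces the overall prefactor $2\pi e(-\frac18)\sin(\frac{\pi\ell}p)$: the sine comes from $\zeta_p^\ell-\zeta_p^{-\ell}=2i\sin(\frac{\pi\ell}p)$ combined with the symmetry $A(\frac\ell p;n)=A(\frac{p-\ell}p;n)$, and the phase $e(-\frac18)$ is the standard normalization of the weight-$\frac12$ $\eta$-multiplier. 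The Rademacher expansion for a weight-$\frac12$ form with principal part at $\infty$ of index $-\frac1{24}$ then forces the $(24n-1)^{1/4}$ normalization and the Bessel factor $I_{\frac12}\!\left(\frac{4\pi\sqrt{24n-1}}{24c}\right)$. The cusp-$\infty$ contribution becomes a sum over $c$ with $p\mid c$ and matches the first line of \eqref{main theorem formula, mod 7} as well as all of \eqref{main theorem formula, mod 5}; the cusp-$0$ contribution is a sum over $c=pa$ with $\gcd(a,p)=1$, weighted by the principal part of the completed form at $0$, with the extra $\sqrt{7}$ in the denominator reflecting the width of the cusp $0$ in the level-$7$ structure.

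The main obstacle is then the computation of the principal part at cusp $0$ for both $p=5$ and $p=7$. For $p=5$ this principal part must vanish identically; the cleanest route is to apply the transformation of $\mR(\zeta_5^\ell;q)$ under $z\mapsto -1/(5z)$ using Watson's classical identities expressing it in terms of Ramanujan's order-$5$ mock theta functions, and to verify that every resulting component has non-singular Fourier expansion at $\infty$. This kills the cusp-$0$ sum and yields \eqref{main theorem formula, mod 5}. For $p=7$ the corresponding $S$-transform produces a non-trivial principal part, but supported only on residues $\pm 1 \bmod 7$; after tracking how the parameter $\ell$ interacts with the denominator $a$ through the Kloosterman multiplier $\mu_7$, this becomes the restriction $[a\ell]\in\{1,6\}$ in \eqref{main theorem formula, mod 7}. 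Once the principal-part data at both cusps is pinned down, absolute convergence of the Kloosterman--Bessel series for $n\geq 1$ follows from Sali\'e-type bounds together with the decay of $I_{\frac12}$ at small argument, upgrading the asymptotic formula of \cite{BringmannTAMS} to the exact identity of \cite{QihangExactFormula} and hence proving Theorem~\ref{main theorem corollary}.
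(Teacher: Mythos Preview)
The paper does not prove Theorem~\ref{main theorem corollary} at all: it is quoted verbatim as \cite[Corollary~2.2]{QihangExactFormula}, with no argument given here. So there is no ``paper's own proof'' to compare against beyond the bare citation, and your task was really to supply the argument that the cited paper carries out.

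Your outline of that argument is broadly accurate in structure --- complete $\mR(\zeta_p^\ell;q)$ to a weight-$\tfrac12$ harmonic Maass form, identify the principal parts at the cusps $\infty$ and $0$ of level $p$, and read off a Rademacher-type expansion --- and your explanation of why only the $\infty$-cusp contributes for $p=5$ while the $0$-cusp survives for $p=7$ with the residue restriction $[a\ell]\in\{1,6\}$ is on the right track. However, your final sentence contains a genuine gap. You write that ``absolute convergence of the Kloosterman--Bessel series for $n\geq 1$ follows from Sali\'e-type bounds together with the decay of $I_{\frac12}$ at small argument.'' This is false as stated: for small $x$ one has $I_{1/2}(x)\asymp x^{1/2}$, so the $c$-th term is of size $|S(c)|\,c^{-3/2}$, and even a Weil/Sali\'e bound $|S(c)|\ll c^{1/2+\varepsilon}$ only gives $c^{-1+\varepsilon}$, which does not sum. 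Passing from Bringmann's asymptotic to an \emph{exact} identity in weight $\tfrac12$ is precisely the non-trivial content of \cite{QihangExactFormula}; it requires either sharper averaged bounds for sums of these Kloosterman sums or a direct circle-method error analysis showing the tail tends to zero, not merely pointwise square-root cancellation. You should either invoke the specific estimate from \cite{QihangExactFormula} that handles this, or acknowledge that convergence is the substantive step and not a routine consequence of standard bounds.
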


	Theorem~\ref{main theorem corollary} gives us a new way to directly compute $A(\frac \ell p;n)$ for $p=5,7$ and all $\ell$. In this paper, we give a new proof of Theorem~\ref{Dyson's conjectures in ASD} by establishing the following vanishing properties of the Kloosterman sums $S_{\infty\infty}^{(\ell)}(0,n,c,\mu_p)$ for $p=5,7$ and $S_{0\infty}^{(\ell)}\(0,n,a,\mu_7;0\)$. This is totally different from the methods in \cite{AtkinSDrank} and \cite{GarvanTransformationDyson2017}. 
	
	\begin{theorem}\label{Kloosterman sums vanish}
		(i) For all integers $n\geq 0$ and $1\leq\ell\leq p-1$ for $p=5,7$ (denoted by $p|c$ below), we have the following vanishing conditions for the Kloosterman sums appeared in Theorem~\ref{main theorem corollary}: 
		\begin{enumerate}
			\item[{\rm(5-4)}] If $5|c$, we have $S_{\infty\infty}^{(\ell)}(0,5n+4,c,\mu_5)=0$.
			\item[{\rm(7-5,1)}] If $7|c$, $\frac c7\cdot \ell\not\equiv 1 \Mod 7$, and $\frac c7\cdot\ell\not \equiv -1\Mod 7$, then $S_{\infty\infty}^{(\ell)}(0,7n+5,c,\mu_7;0)=0$.
			\item[{\rm(7-5,2)}] If $7|c$, $7\nmid a$, $a\ell\equiv \pm1 \Mod 7$, and $c=7a$, we have 
			\[ e(-\tfrac 18)S_{\infty\infty}^{(\ell)}(0,7n+5,c,\mu_7)+2{\sqrt 7}\,S_{0\infty}^{(\ell)}(0,7n+5,a,\mu_7;0)=0.\]
		\end{enumerate}
		
		(ii) Furthermore, we denote $C_p^{a,b}\defeq \cos(\frac{a\pi }p)-\cos(\frac{b\pi}p)$ and
		\begin{align*}
			S_7^{(\ell)}(n,c)\defeq \sin(\tfrac {\pi\ell}7) \(e(-\tfrac 18){S_{\infty\infty}^{(\ell)}(0,n,c,\mu_7)}+\mathbf{1}_{\substack{ a\defeq c/7 \\ {[a\ell]=1,6}}}\cdot
			2\sqrt 7\,S_{0\infty}^{(\ell)}(0,7n+5,a,\mu_7;0)\)
		\end{align*}
		for simplicity, where $\mathbf{1}_{\textrm{condition}}$ equals 1 if the condition meets and equals 0 otherwise. We also have the following vanishing conditions for all $c\in \Z$ divisible by $p$, where $p=5$ or $7$ is marked at the subscript of $C_p^{a,b}$: 
		\begin{align*}
			C_5^{2,4}\sin(\tfrac{\pi}5)S_{\infty\infty}^{(1)}(0,5n+1,c,\mu_5)+C_5^{4,2}\sin(\tfrac{2\pi}5)S_{\infty\infty}^{(2)}(0,5n+1,c,\mu_5)&=0,\tag{5-1}\\
			C_5^{0,4}\sin(\tfrac{\pi}5)S_{\infty\infty}^{(1)}(0,5n+2,c,\mu_5)+C_5^{0,2}\sin(\tfrac{2\pi}5)S_{\infty\infty}^{(2)}(0,5n+2,c,\mu_5)&=0,\tag{5-2}\\
			C_7^{4,6}S_7^{(1)}(7n,c)+C_7^{6,2}S_7^{(2)}(7n,c)+C_7^{2,4}S_7^{(3)}(7n,c)&=0,\tag{7-0}\\
			C_7^{2,4}S_7^{(1)}(7n+1,c)+C_7^{4,6}S_7^{(2)}(7n+1,c)+C_7^{6,2}S_7^{(3)}(7n+1,c)&=0,\tag{7-1,1}\\
			C_7^{4,6}S_7^{(1)}(7n+1,c)+C_7^{6,2}S_7^{(2)}(7n+1,c)+C_7^{2,4}S_7^{(3)}(7n+1,c)&=0,\tag{7-1,2}\\
			C_7^{0,6}S_7^{(1)}(7n+2,c)+C_7^{0,2}S_7^{(2)}(7n+2,c)+C_7^{0,4}S_7^{(3)}(7n+2,c)&=0,\tag{7-2}\\
			C_7^{0,4}S_7^{(1)}(7n+3,c)+C_7^{0,6}S_7^{(2)}(7n+3,c)+C_7^{0,2}S_7^{(3)}(7n+3,c)&=0,\tag{7-3,1}\\
			C_7^{2,6}S_7^{(1)}(7n+3,c)+C_7^{4,2}S_7^{(2)}(7n+3,c)+C_7^{6,4}S_7^{(3)}(7n+3,c)&=0,\tag{7-3,2}\\
			C_7^{0,2}S_7^{(1)}(7n+4,c)+C_7^{0,4}S_7^{(2)}(7n+4,c)+C_7^{0,6}S_7^{(3)}(7n+4,c)&=0,\tag{7-4,1}\\
			C_7^{2,6}S_7^{(1)}(7n+4,c)+C_7^{4,2}S_7^{(2)}(7n+4,c)+C_7^{6,4}S_7^{(3)}(7n+4,c)&=0,\tag{7-4,2}\\
			\(C_7^{0,4}+C_7^{2,6}\)S_7^{(1)}(7n+6,c)+\(C_7^{0,6}+C_7^{4,2}\)S_7^{(2)}(7n+6,c)&\\
			+\(C_7^{0,2}+C_7^{6,4}\)S_7^{(3)}(7n+6,c)&=0,\tag{7-6}
		\end{align*}
		
	\end{theorem}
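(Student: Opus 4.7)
The plan is to prove Theorem~\ref{Kloosterman sums vanish} by direct manipulation of the explicit formulae for $S_{\infty\infty}^{(\ell)}$ and $S_{0\infty}^{(\ell)}$ from \eqref{S infty infty for mod p}, \eqref{S 0 infty mod 7, +1} and \eqref{S 0 infty mod 7, -1}. Both Kloosterman sums are finite sums over $d$ coprime to the modulus of a multiplier factor from $\mu_p$ (whose $\ell$-dependence enters only through a $p$-th root of unity) multiplied by a Kloosterman phase $e(n\bar d/c)$.

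For part~(i), I would write $c=pc'$ and decompose the summation over $d\in(\Z/c\Z)^*$ into orbits under the shift $d\mapsto d+c'$. Within each orbit the multiplier picks up a fixed $p$-th root of unity $\zeta_p^{\alpha(d_0,\ell)}$ under one shift (determined by a Dedekind-sum calculation on $\mu_p$), while $e(n\bar d/c)$ picks up a related $p$-th root of unity depending on $n$ and $d_0$. Summing over the orbit yields $\sum_{k=0}^{p-1}\zeta_p^{k\beta(d_0,\ell,n)}$, which vanishes unless $\beta(d_0,\ell,n)\equiv 0\Mod p$. For (5-4), a case-by-case verification would confirm $\beta\not\equiv 0\Mod 5$ for every $d_0$ when $n\equiv 4\Mod 5$, giving total vanishing; (7-5,1) is analogous, and the exceptional orbits (those with $\tfrac{c}{7}\cdot\ell\equiv\pm 1\Mod 7$) are precisely where the inner geometric sum fails to vanish. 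Identity (7-5,2) then says that the residual contribution from these orbits is cancelled by the cusp-$0$ Kloosterman sum $S_{0\infty}^{(\ell)}$; I would verify this by transporting the exceptional-orbit terms from cusp $\infty$ to cusp $0$ via a scaling matrix coming from the Atkin--Lehner involution on $\Gamma_0(7)$, producing exactly the $2\sqrt 7$ factor appearing in the statement.

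For part~(ii), the linear combinations of Kloosterman sums are designed (via \eqref{Relation between rank mod and A()} and the exact formula of Theorem~\ref{main theorem corollary}) to mirror Dyson's remaining conjectures. To prove each identity directly at the Kloosterman-sum level, I would interchange the order of summation: for each fixed $d$, the Kloosterman phase and the $d$-dependent part of the multiplier are common across the three values of $\ell$, so each identity reduces to a finite trigonometric identity of the schematic form $\sum_\ell C_p^{a_\ell,b_\ell}\sin(\tfrac{\pi\ell}{p})\,\zeta_p^{\ell\gamma(d)}=0$. I would check these by expanding $C_p^{a,b}$ and $\sin(\tfrac{\pi\ell}{p})$ into $p$-th roots of unity and matching Galois orbits in $\Q(\zeta_p)$; each identity should reduce to one of a handful of elementary trigonometric relations among $\cos(\tfrac{k\pi}{p})$ and $\sin(\tfrac{k\pi}{p})$. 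The $S_7^{(\ell)}$ cases where the $S_{0\infty}$ contribution enters are handled by the same interchange-of-summation strategy, using the cusp-$0$ transport of part~(i) on the exceptional orbits.

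The main obstacle will be tracking $\mu_p$ precisely: the multiplier involves Dedekind sums and the $24$-th root-of-unity phase of $\eta$, and is sensitive to the $\SL_2(\Z)$-lift of each matrix representative. Pinning down the phase shift under $d\mapsto d+c'$, and the phase transition between cusps $\infty$ and $0$ in the $p=7$ case, is where the real calculation concentrates; once these are correctly computed, both parts should follow from finite verifications. I expect much of the paper to consist of a careful catalog of closed forms for these multiplier phases on the relevant cosets, from which (5-4), (7-5,1), (7-5,2) and the part~(ii) identities all drop out as finite checks.
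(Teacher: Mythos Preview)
Your high-level decomposition into orbits $V(r,c)=\{d\equiv r\Mod{c'}\}$ with $c'=c/p$ is exactly what the paper does, but your expectation that each orbit sum is a geometric series $\sum_{k=0}^{p-1}\zeta_p^{k\beta}$ is only correct in the easy case $p^2\mid c$. When $p\|c$ (the main case), three things break simultaneously: the orbit $V(r,c)\subset(\Z/c\Z)^*$ has $p-1$ elements rather than $p$ (one shift lands on a multiple of $p$); the summands do not have equal modulus, because the factor $1/\sin(\tfrac{\pi a\ell}{p})$ in \eqref{S infty infty for mod p} takes different values as $a$ runs over the nonzero residues mod $p$; and the phase increment under $d\mapsto d+\beta c'$ is a $2p$-th root of unity that varies from step to step (for $p=5$, $\ell=1$, $c'\equiv 1\Mod 5$ the successive argument differences are $\tfrac{3}{10},\tfrac{1}{10},\tfrac{3}{10},\tfrac{3}{10}$). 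The orbit sum therefore cannot vanish for the reason $\sum_k\zeta_p^{k\beta}=0$. What the paper actually does is compute, via the Dedekind-sum congruences \eqref{Congru Dedekind theta is or not 3}--\eqref{Congru Dedekind c even} and an exhaustive split over $c'\Mod{2\cdot3\cdot p}$, the exact relative arguments of the $p-1$ points on their concentric circles of radii $\csc(\tfrac{j\pi}{p})$, and then verifies that each resulting configuration sums to zero via identities such as $\cos(\tfrac{\pi}{10})/\sin(\tfrac{2\pi}{5})=\cos(\tfrac{3\pi}{10})/\sin(\tfrac{\pi}{5})=1$ for $p=5$ and $\cos(\tfrac{3\pi}{7})/\sin(\tfrac{\pi}{7})-\cos(\tfrac{\pi}{7})/\sin(\tfrac{2\pi}{7})+\cos(\tfrac{2\pi}{7})/\sin(\tfrac{3\pi}{7})=0$ for $p=7$.

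Two further points. For (7-5,2) the paper does not invoke an Atkin--Lehner transport; it writes the single $S_{0\infty}$ summand as a seventh point $Q(B)$ of modulus $2\sqrt 7$, pins down $\Arg(Q\rightarrow P(d_1);\ell)$ by another Dedekind-sum computation, and closes with the identity $\cot(\tfrac{\pi}{7})+\cot(\tfrac{2\pi}{7})-\cot(\tfrac{3\pi}{7})=\sqrt 7$. For part~(ii), your plan to fix $d$ and reduce to a trigonometric identity in $\ell$ will not work as stated: the $\ell$-dependence of the summand is through $e(-\tfrac{3c'a\ell^2}{2p})/\sin(\tfrac{\pi a\ell}{p})$, not merely $\zeta_p^{\ell\gamma(d)}$, and the linear relations among $S_7^{(\ell)}$ do not hold term-by-term in $d$. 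The paper instead reuses the $V(r,c)$ grouping, shifts the already-computed argument tables from the $pn+k_0$ case to the $pn+k$ case by adjusting only the $P_3$ contribution, and checks that the resulting three (or two, for $p=5$) configurations combine to zero via a short list of explicit trigonometric identities in $\cos(\tfrac{j\pi}{p})$ and $\sin(\tfrac{j\pi}{p})$.
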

	
	Using \eqref{Relation between rank mod and A()} and Theorem~\ref{main theorem corollary}, we have the following corollary. 
	\begin{corollary}\label{corollary Ramanujan congruence}
		For any pair {\rm{($p$-$k$)}} (or {\rm{($p$-$k$,$t$)}} for both $t=1,2$) in Theorem~\ref{Kloosterman sums vanish} with
		\[p=5,\ k\in\{1,2,4\}\quad \text{or} \quad p=7,\ k \in\{0,1,2,3,4,5,6\},\]
		we have Dyson's conjecture {\rm{($p$-$k$)}} in Theorem~\ref{Dyson's conjectures in ASD}. 
	\end{corollary}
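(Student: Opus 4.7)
The plan is to express each Dyson identity in Theorem~\ref{Dyson's conjectures in ASD} as a vanishing statement for a $\Z$-linear combination of the coefficients $A(\tfrac{\ell}{p};pn+k)$ via \eqref{Relation between rank mod and A()}, to substitute the exact formulae of Theorem~\ref{main theorem corollary}, and then to invoke Theorem~\ref{Kloosterman sums vanish} termwise in the $c$-sum. For $0\leq a_1,a_2\leq p-1$, \eqref{Relation between rank mod and A()} combined with the symmetry $A(\tfrac{j}{p};n)=A(\tfrac{p-j}{p};n)$ gives
\[
p\bigl(N(a_1,p;n)-N(a_2,p;n)\bigr)=\sum_{j=1}^{(p-1)/2}2\bigl[\cos(\tfrac{2\pi ja_1}{p})-\cos(\tfrac{2\pi ja_2}{p})\bigr]A(\tfrac{j}{p};n),
\]
and the bracketed quantity agrees with $C_p^{\alpha,\beta}$ in the notation of Theorem~\ref{Kloosterman sums vanish}(ii) for appropriate residues $\alpha,\beta$. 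A parallel four-term identity on the left-hand side handles the combined conjecture $(7\text{-}6)$.

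For every pair $(p\text{-}k)$ with $k\neq p-1$, substituting Theorem~\ref{main theorem corollary} into each $A(\tfrac{j}{p};n)$ and pulling out the common prefactor together with $\tfrac{1}{c}I_{1/2}\bigl(\tfrac{4\pi\sqrt{24n-1}}{24c}\bigr)$ would reduce the required identity to termwise vanishing in $c$ of exactly one of the linear combinations listed in Theorem~\ref{Kloosterman sums vanish}(ii). The two sub-cases $(7\text{-}3,1),(7\text{-}3,2)$ and $(7\text{-}4,1),(7\text{-}4,2)$ arise because $(7\text{-}3)$ and $(7\text{-}4)$ each encode two independent rank equalities. For $p=7$, the two Kloosterman pieces appearing in \eqref{main theorem formula, mod 7} share the same Bessel evaluation precisely when $c=7a$, and the ratio of their prefactors is exactly $2\sqrt 7$; this is how the combined object $S_7^{(\ell)}(n,c)$ in Theorem~\ref{Kloosterman sums vanish}(ii) was normalized.

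For the remaining pairs $(5\text{-}4)$ and $(7\text{-}5)$ the Dyson identity is the stronger statement that each $A(\tfrac{\ell}{p};pn+k)$ vanishes individually, after which \eqref{Relation between rank mod and A()} yields $N(0,p;pn+k)=\cdots=N(p-1,p;pn+k)=\tfrac{1}{p}p(pn+k)$. For $(5\text{-}4)$ this is immediate from Theorem~\ref{Kloosterman sums vanish}(i) $(5\text{-}4)$. For $(7\text{-}5)$ one splits the $c$-sum in \eqref{main theorem formula, mod 7}: when $c=7m$ with $m\ell\not\equiv\pm1\pmod 7$ the only contribution vanishes by $(7\text{-}5,1)$, while for the paired indices $c=7a$ with $a\ell\equiv\pm1\pmod 7$ the $\infty\infty$ and $0\infty$ contributions cancel exactly by $(7\text{-}5,2)$, again thanks to the $2\sqrt 7$ ratio noted above. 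The main obstacle is really just bookkeeping: verifying case by case that the cosine-difference coefficients produced by the $j\leftrightarrow p-j$ pairing match the $C_p^{\alpha,\beta}$ coefficients prescribed in Theorem~\ref{Kloosterman sums vanish}(ii); this is a finite, largely mechanical verification across the ten identities in question.
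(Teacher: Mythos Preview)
Your proposal is correct and follows precisely the approach the paper indicates (in the one-line remark ``Using \eqref{Relation between rank mod and A()} and Theorem~\ref{main theorem corollary}, we have the following corollary''): expand $N(a_1,p;n)-N(a_2,p;n)$ via \eqref{Relation between rank mod and A()} with the $j\leftrightarrow p-j$ pairing to produce the $C_p^{\alpha,\beta}$ coefficients, substitute the exact formulae of Theorem~\ref{main theorem corollary}, and apply Theorem~\ref{Kloosterman sums vanish} term-by-term in $c$. Your treatment of the $(7\text{-}5)$ case---splitting on whether $a\ell\equiv\pm1\pmod 7$ and checking that the $2\sqrt 7$ prefactor ratio matches the combination in (7-5,2)---and your observation that $S_7^{(\ell)}(n,c)$ is normalized exactly so that the two Bessel contributions at $c=7a$ collapse into a single coefficient, are both on point and fill in details the paper leaves implicit.
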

	
	The paper is organized as follows. In Section~\ref{Section notations} we review about our notations of vector-valued Kloosterman sums as established in \cite{QihangExactFormula}. In Section~\ref{Section (5-4)} we provide a detailed proof of (5-4) of Theorem~\ref{Kloosterman sums vanish}. In Section~\ref{Section 7-5,1} and Section~\ref{Section 7-5,2} we prove (7-5,1) and (7-5,2) of Theorem~\ref{Kloosterman sums vanish}, respectively. Section~\ref{Section part ii} covers the proofs of (5-1), (5-2) and (7-0) of Theorem~\ref{Kloosterman sums vanish} but does not include the repetitive proofs for the remaining parts labeled (7-$k$). 
	
We have omitted detailed proofs for similar cases to avoid redundancy. The complete proof, along with the Mathematica code used for generating illustrative graphs within the proofs, are available in our GitHub repository \cite{QihangKLsumsGitHub}. 
	
	\section{Notations}
	\label{Section notations}
	In this section we define some notation involving Dedekind sums and Kloosterman sums. For the origin of these notations, see  \cite{GarvanTransformationDyson2017,QihangExactFormula}. 
	
	For integers $d$ and $m\geq 1$, let $\overline{ d_{\{m\}}}$ denote the inverse of $d\Mod m$. If there is a subscript, e.g. $d_1$, then we write $\overline{ d_{1\{m\}}}$ as the inverse of $d_1\Mod m$. 
	
	Define
	\[((x))\defeq \left\{
	\begin{array}{ll}
		x-\floor{x}-\tfrac12,& \text{ when }x\in \R\setminus\Z,\\
		0,& \text{ when }x\in \Z. 
	\end{array}
	\right.
	\]
	For integers $c>0$ and $(d,c)=1$, we define the Dedekind sum as
	\begin{equation}\label{Dedekind sum}
		s(d,c)\defeq\sum_{r\Mod c}\(\(\frac rc\)\)\(\(\frac{dr}c\)\). 
	\end{equation}
Here we use the notation $r\Mod c$ to indicate that the summation is taken over all residue classes modulo $c$. Similarly, $r\Mod c^*$ denotes that the summation is restricted to reduced residue classes, where $\gcd(r,c)=1$. For simplicity in more complex subscripts, we will abbreviate these as $r(c)$ and $r(c)^*$, respectively.  
 
	The Dedekind sums have the following properties  \cite[(4.2)-(4.5)]{Lewis1995ModDedekindSum}:  
	\begin{align}
		\label{Congru Dedekind theta is or not 3}&2\theta cs(d,c)\in \Z,\text{\ \ where }\theta=\gcd(c,3),\\
		\label{Congru Dedekind mod theta c}&12 cs(d,c)\equiv d+\overline{ d_{\{\theta c\}}}\Mod {\theta c},\\
		\label{Congru Dedekind c odd}&12 cs(d,c)\equiv c+1-2(\tfrac d c)\Mod8,\text{\ \ if }c\text{ is odd,}\\
		\label{Congru Dedekind c even}&12 cs(d,c)\equiv d+\(c^2+3c+1+2c(\tfrac cd)\)\overline{ d_{\{8\times 2^\lambda \}}}\Mod {8\times 2^\lambda},\text{\ \ if }2^\lambda \|c\text{ for }\lambda\geq 1. 
	\end{align}
	These congruences determine $12cs(d,c)\Mod {24c}$ uniquely in every case ($2|c$ or $2\nmid c$, $3|c$ or $3\nmid c$). 
	
	In the proof we use the following quadratic reciprocity of the Kronecker symbol $(\frac \cdot \cdot)$. For any non-zero integer $n$, write $n=2^\lambda n_o$ where $n_o$ is odd. For integers $m,n$ with $(m,n)=1$, we have
	\begin{equation}\label{quadratic reciprocity}
		\(\frac mn\)\(\frac nm\)=\pm(-1)^{(m_o-1)(n_o-1)/4},
	\end{equation}
	where we take $+$ if $m\geq 0$ or $n\geq 0$, and we take $-$ if $m<0$ and $n<0$.

	Next we define the Kloosterman sums $S_{\infty\infty}^{(\ell)}(0,n,c,\mu_p)$ for $p=5,7$ and $S_{0\infty}^{(\ell)}(0,n,c,\mu_7;0)$ appearing at Corollary~\ref{main theorem corollary}. We follow the notations of vector-valued Kloosterman sums in \cite[\S4.3]{QihangExactFormula}. 
	From \cite[(5.19), (5.29)]{QihangExactFormula}, when $p|c$ we have
	\begin{equation}\label{S infty infty for mod p}
		S_{\infty\infty}^{(\ell)}(0,n,c,\mu_p)=e(-\tfrac 18)\sum_{\substack{d\Mod c^*\\ad\equiv 1\Mod c}} \frac{(-1)^{\ell c}e(-\frac{3ca\ell^2}{2p^2})}{\sin(\frac{\pi a\ell}p)}\,e^{-\pi is(d,c)}e\(\frac{nd}c\).
	\end{equation} 
	
	When $p=7$, recall that $[A\ell]$ is the least non-negative residue of $A\ell\Mod 7$. From \cite[(5.31)]{QihangExactFormula}, when $A\ell=7T+1$ for some integer $T\geq 0$, we have
	\begin{equation}\label{S 0 infty mod 7, +1}
		S_{0\infty}^{(\ell)}(0,n,A,\mu_7;0)=(-1)^{A\ell-[A\ell]}\sum_{\substack{B\Mod A^*\\0<C<7A,\,7|C\\BC\equiv -1(A)}}e\(\frac{(\frac 32 T^2+\frac12 T)C}A\)e^{-\pi i s(B,A)}e\(\frac{nB}A\). 
	\end{equation}
	When $A\ell=7T-1$ for some integer $T\geq 1$, we have
	\begin{align}\label{S 0 infty mod 7, -1}
		\begin{split}
			&S_{0\infty}^{(\ell)}(0,n,A,\mu_7;0)\\
			&=(-1)^{A\ell-[A\ell]}\sum_{\substack{B\Mod A^*\\0<C<7A,\,7|C\\BC\equiv -1(A)}}e\(\frac{(\frac 32 (T-1)^2+\frac52 (T-1)+1)C}A\)e^{-\pi i s(B,A)}e\(\frac{nB}A\).
		\end{split}
	\end{align}
	If $[A\ell]\neq 1$ and $[A\ell]\neq 6$, then $S_{0\infty}^{(\ell)}(0,n,A,\mu_7;0)\defeq 0$.

	\section{Proof of (5-4) of Theorem~\ref{Kloosterman sums vanish}}
	\label{Section (5-4)}
	In this section we prove (5-4) of Theorem~\ref{Kloosterman sums vanish}. 
	We only consider $\ell=1,2$ because $A(\frac {\ell}p;n)=A(1-\frac {\ell}p;n)$. 
	
	Define $c'\defeq c/5$. For any integer $r$ with $(r,c')=1$, we define
	\[V(r,c)\defeq \{d\Mod c^*:\ d\equiv r\Mod {c'}\}. \]
	For example, $V(1,30)=\{d\Mod {30}^*: d\equiv 1,7,13,19\Mod{30}\}$ and $V(4,25)=\{d\Mod{25}^*: d\equiv 4,9,14,19,24\Mod{25}\}$. 
	Clearly, $|V(r,c)|=4$ if $5\| c$ and $|V(r,c)|=5$ if $25|c$. Moreover, $(\Z/c\Z)^*$ is the disjoint union 
	\[(\Z/c\Z)^*=\bigcup_{r\Mod{c'}^*}V(r,c). \] 
	
	By \eqref{S infty infty for mod p} we have
	\begin{equation}\label{S infty infty for mod 5 using}
		e(\tfrac 18)S_{\infty\infty}^{(\ell)}(0,5n+4,c,\mu_5)=\sum_{\substack{d\Mod c^*\\ad\equiv 1\Mod c}} \frac{(-1)^{\ell c}e(-\frac{3c'a\ell^2}{10})}{\sin(\frac{\pi a\ell}{5})}\,e^{-\pi is(d,c)}e\(\frac{(5n+4)d}c\).
	\end{equation}
	We claim the following proposition. 
	
	\begin{proposition}{\label{Vrc sum equals 0}}
		For $\ell=1,2$, the sum on $V(r,c)$ satisfies
		\begin{equation}{\label{Vrc sum equals 0 equation}}
			s_{r,c}\defeq \sum_{\substack{d\in V(r,c)\\ad\equiv 1\Mod c}} \frac{e(-\frac{3c'a\ell^2}{10})}{\sin(\frac{\pi a\ell}{5})}e^{-\pi is(d,c)}e\(\frac{4d}c\)=0.
		\end{equation}
	\end{proposition}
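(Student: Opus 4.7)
The plan is to parametrize $V(r,c)$ explicitly via CRT, expand each factor of the summand as an explicit function of the parameter, and then exhibit the cancellation either as a complete sum of fifth roots of unity or as a pairwise sign-flipping involution on $V(r,c)$.

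I would first split into two subcases on the $5$-adic valuation of $c$. In \textbf{Case A}, where $5\|c$, CRT identifies $V(r,c)$ bijectively with $\{1,2,3,4\}$ via $\delta\mapsto d_\delta$, where $d_\delta\equiv r\Mod{c'}$ and $d_\delta\equiv\delta\Mod 5$; then $a_\delta\defeq\overline{d_{\delta\{c\}}}$ satisfies $a_\delta\equiv\overline{r_{\{c'\}}}\Mod{c'}$ and $a_\delta\equiv\overline{\delta_{\{5\}}}\Mod 5$. In \textbf{Case B}, where $25\mid c$, I parametrize instead by $k\in\{0,1,2,3,4\}$ via $d=d_0+kc'$, noting that $5\mid c'$ forces every $d\in V(r,c)$ to share the common residue $r\Mod 5$, so that every corresponding $a$ shares the common residue $\overline{r_{\{5\}}}\Mod 5$.

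Next, I would compute the parameter-dependence of each factor. The phase $e(4d/c)$ decomposes as a fixed factor times $\zeta_5^{4k}$ in Case B, or more generally as a $\delta$-dependent fifth root of unity in Case A (since $4d_\delta/c - 4d_0/c$ lies in $\tfrac{1}{5}\Z$). The factors $e(-3c'a\ell^2/10)$ and $\sin(\pi a\ell/5)^{-1}$ depend on $a\Mod{10}$, which splits by CRT into the fixed $a\Mod{c'}$ part together with the $a\Mod 5$ residue and the parity of $a$. To extract $e^{-\pi is(d,c)}$ I would invoke \eqref{Congru Dedekind mod theta c}--\eqref{Congru Dedekind c even} to pin down $12cs(d,c)\Mod{24c}$, and hence $s(d,c)\Mod 2$, in terms of $d$, $\overline{d_{\{c\}}}$, and the appropriate parity/$3$-divisibility corrections.

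With everything in hand, Case B should reduce, after pulling out the $k$-independent $\sin$ factor and other constants, to a constant multiple of $\sum_{k=0}^4\zeta_5^{mk}$ for some $m$ coprime to $5$, which vanishes. Case A is more delicate: the four $d_\delta$ hit all four nonzero residues mod $5$, and here the natural candidate is the involution $\delta\leftrightarrow 5-\delta$ on $\{1,2,3,4\}$, which pairs $\sin(\pi a\ell/5)$ values and (after checking the parity shifts) must be shown to flip the overall sign of the summand. The main obstacle will be the Dedekind-sum bookkeeping: the congruences \eqref{Congru Dedekind mod theta c}--\eqref{Congru Dedekind c even} split into several sub-cases according to $c\Mod 6$ and to $2^\lambda\|c$, and one must verify the intended cancellation uniformly across all of them. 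In Case B a further complication is resolving the implicit link between $j$ and $k$ arising from $(a_0+jc')(d_0+kc')\equiv 1\Mod c$, which determines the indexing of $a$ in terms of the indexing of $d$; once this is in hand, the proof reduces to careful modular arithmetic.
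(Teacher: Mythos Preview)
Your Case~B plan (when $25\mid c$) is essentially the paper's approach and will work: the seven-free factor $\sin(\pi a\ell/5)^{-1}$ is indeed constant on $V(r,c)$ since $a\equiv\overline{r_{\{5\}}}\Mod 5$ is fixed, and the remaining phases combine to a nontrivial fifth root of unity in the parameter $k$, so the sum vanishes as a complete character sum.

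Your Case~A plan, however, has a genuine gap. The involution $\delta\leftrightarrow 5-\delta$ does \emph{not} flip the sign of the summand in general. Concretely, take $\ell=1$ and $c'\equiv 1\Mod 5$: the paper computes the argument differences $\Arg(d_1\to d_2\to d_3\to d_4\to d_1)$ to be $\tfrac{3}{10},\tfrac{1}{10},\tfrac{3}{10},\tfrac{3}{10}$, so $\Arg(d_1\to d_4)=\tfrac{7}{10}\neq\tfrac12$ and $\Arg(d_2\to d_3)=\tfrac{1}{10}\neq\tfrac12$. Moreover $|P(d_1)|=|P(d_4)|=\csc(\tfrac{\pi}{5})$ while $|P(d_2)|=|P(d_3)|=\csc(\tfrac{2\pi}{5})$, so the four summands lie on two different circles and no pairwise sign-flip is possible. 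The actual cancellation is a four-term identity: $P(d_1)+P(d_4)$ and $P(d_2)+P(d_3)$ point in the same direction but with opposite signed magnitudes, and one needs the trigonometric identity $\cos(\tfrac{\pi}{10})/\sin(\tfrac{2\pi}{5})=\cos(\tfrac{3\pi}{10})/\sin(\tfrac{\pi}{5})=1$ to see they cancel. (Your involution \emph{does} happen to work when $c'\equiv 2,3\Mod 5$ for $\ell=1$, and when $c'\equiv 1,4\Mod 5$ for $\ell=2$, because there the argument differences are all $\tfrac12$ or alternate $0,\tfrac12$; but that covers only half the cases.)

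What you actually need in Case~A is to track the exact argument differences $\Arg(d_u\to d_v;\ell)$ as fractions with denominator $10$, not merely determine whether they equal $\tfrac12$. The paper does this by reducing (via a symmetry lemma) to the two differences $\Arg(d_1\to d_2;\ell)$ and $\Arg(d_1\to d_4;\ell)$, then computing each of them case-by-case over $c'\Mod{30}$ (to handle the $2$- and $3$-divisibility branches in \eqref{Congru Dedekind c odd}--\eqref{Congru Dedekind c even}), and finally verifying that the resulting four-point configuration always matches one of six explicit patterns that sum to zero. Your Dedekind-sum bookkeeping is the right ingredient, but the endgame has to be this finer geometric verification rather than an involution.
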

	If Proposition~\ref{Vrc sum equals 0} is true, then 
	\[S_{\infty\infty}^{(\ell)}(0,5n+4,c,\mu_5)=e(-\tfrac 18)(-1)^{\ell c}\sum_{r\Mod {c'}^*}s_{r,c} e\(\frac{nr}{c'}\)=0\]
	for all $n\in \Z$, $\ell=1,2$, and we have proved (5-4) of Theorem~\ref{Kloosterman sums vanish}.

	In the following subsections \S7.1-\S7.4, we prove Proposition~\ref{Vrc sum equals 0} when $5\|c$. In \S7.5, we prove Proposition~\ref{Vrc sum equals 0} when $25|c$. Suppose now that $5\|c$. Since $|V(r,c)|=4$, let $\beta\in \{1,2,3,4\}$ such that $\beta c'\equiv 1\Mod 5$ and we make a special choice of $V(r,c)$ as
	\begin{equation}\label{Vrc dj choice jbetac' p=5}
		V(r,c)=\{d_1,d_2,d_3,d_4\}\quad\text{where }d_j\equiv  j\Mod 5\text{ and }d_{j+1}=d_1+j\beta c'.
	\end{equation}
	We also take $a_j$ for $j\in \{1,2,3,4\}$ such that $a_j\equiv  j\Mod 5$, $a_{j+1}=a_1+j\beta c'$, and
	\begin{equation}\label{ajinverse5 dj =1}
		a_{\overline{j_{\{5\}}}}d_j\equiv 1\Mod c .
	\end{equation}
	These choices do not affect the sum \eqref{Vrc sum equals 0 equation} because $s_{r,c}$ has period $c$ in both $a$ and $d$. In \eqref{Vrc sum equals 0 equation}, we denote each summation term as
	\begin{equation}\label{P1P2P3}
		P(d)\defeq \frac{e\(-\frac{3c'a\ell^2}{10}\)}{\sin(\frac{\pi a\ell}{5})}\cdot e\(-\frac{12 cs(d,c)}{24c}\)\cdot e\(\frac{4d}c\)=:P_1(d)\cdot P_2(d)\cdot P_3(d),
	\end{equation}
	where $P_1(d)\defeq e(-\frac{3c'a \ell^2}{10})/\sin(\frac{\pi a \ell }5)$, $P_2(d)\defeq \exp(-\pi i s(d,c))$, and $P_3(d)\defeq e(\frac{4d}c)$. 
	
	\begin{remark}
		We keep $24c$ in the denominator of $P_2(d)$ because the congruence properties of the Dedekind sum are of the form $12cs(d,c)$. See \eqref{Congru Dedekind theta is or not 3}-\eqref{Congru Dedekind c even} for details. 
	\end{remark}

	We claim that the set of points $P(d)$ for $d\in V(r,c)$ must have the relative position illustrated in one of the following six configurations. Here $0<d_j<c$ for simplicity but we use \eqref{Vrc dj choice jbetac' p=5} in the proof.  
	
	\begin{center}
		
		\includegraphics[scale=0.35]{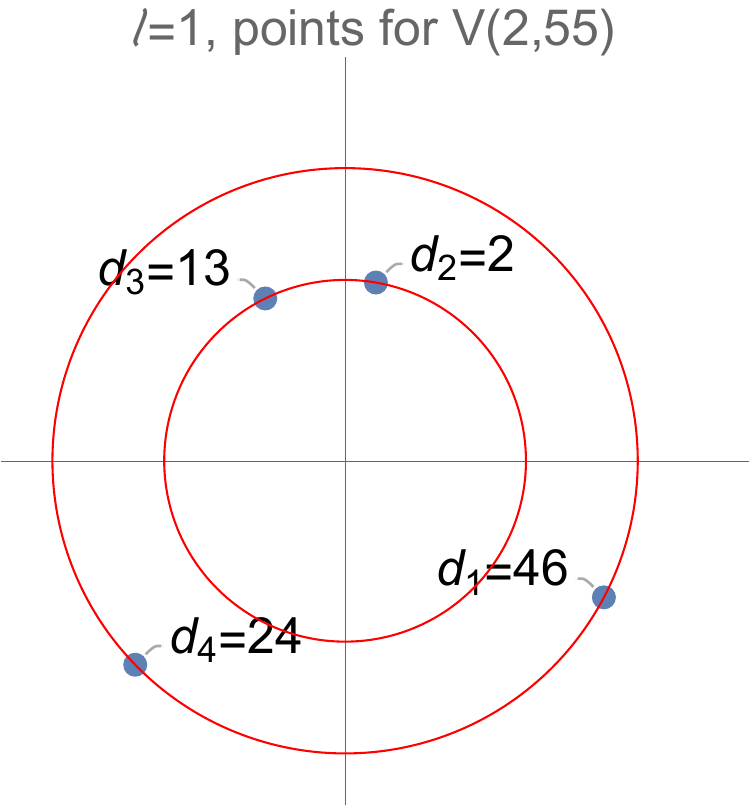}
		\includegraphics[scale=0.35]{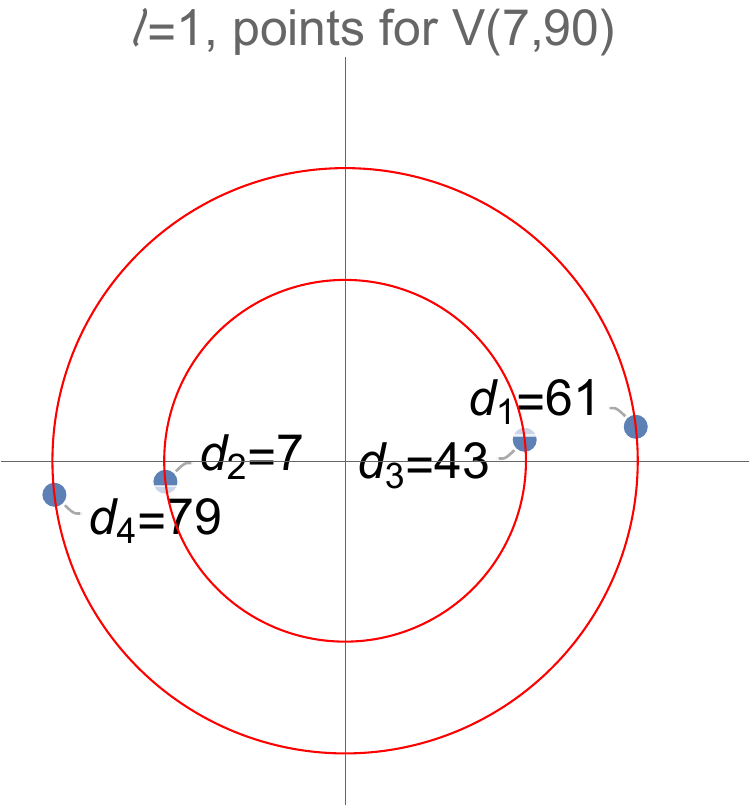}
		\includegraphics[scale=0.35]{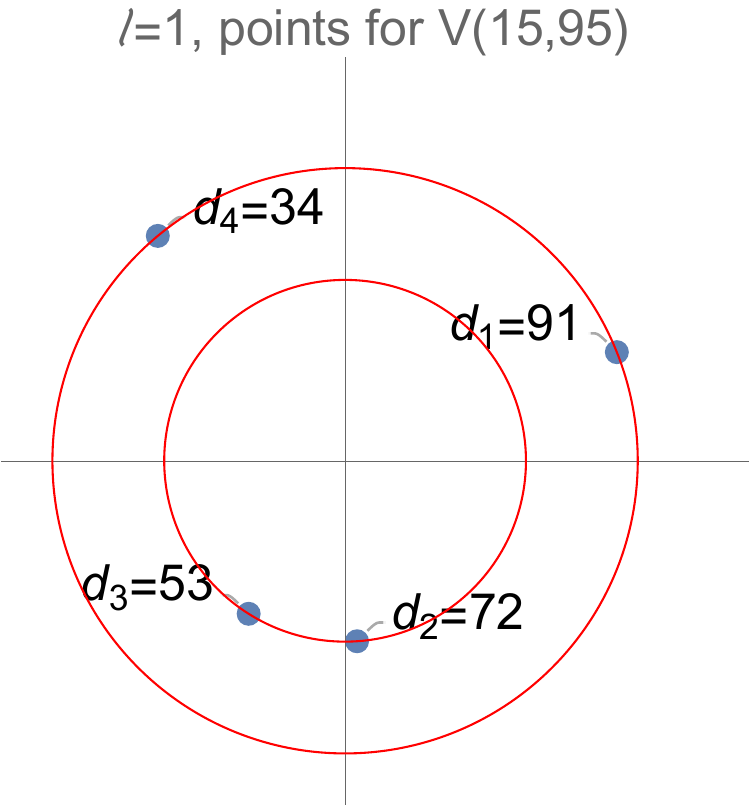}
		\\\vspace{10px}
		\includegraphics[scale=0.35]{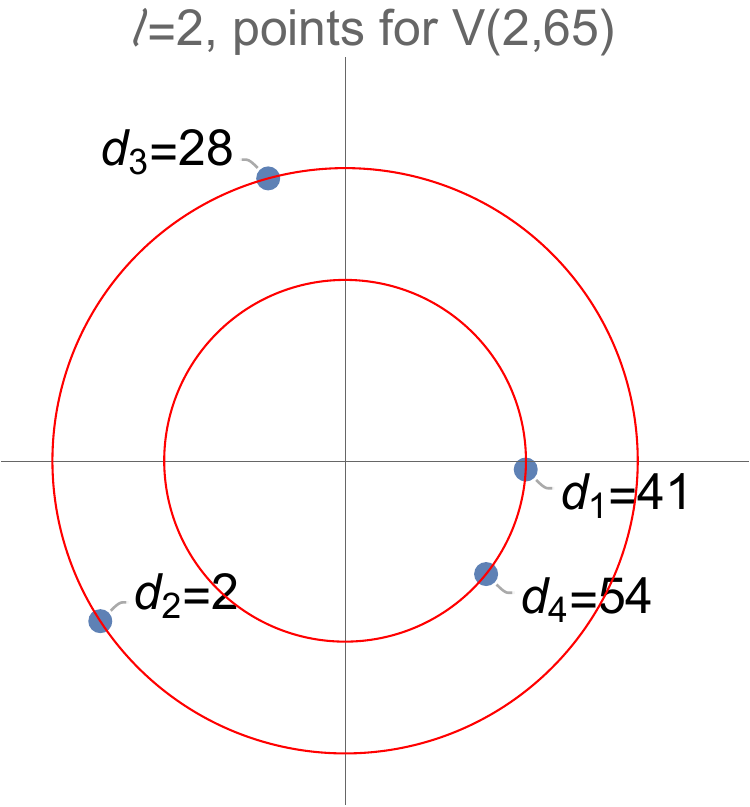}
		\includegraphics[scale=0.35]{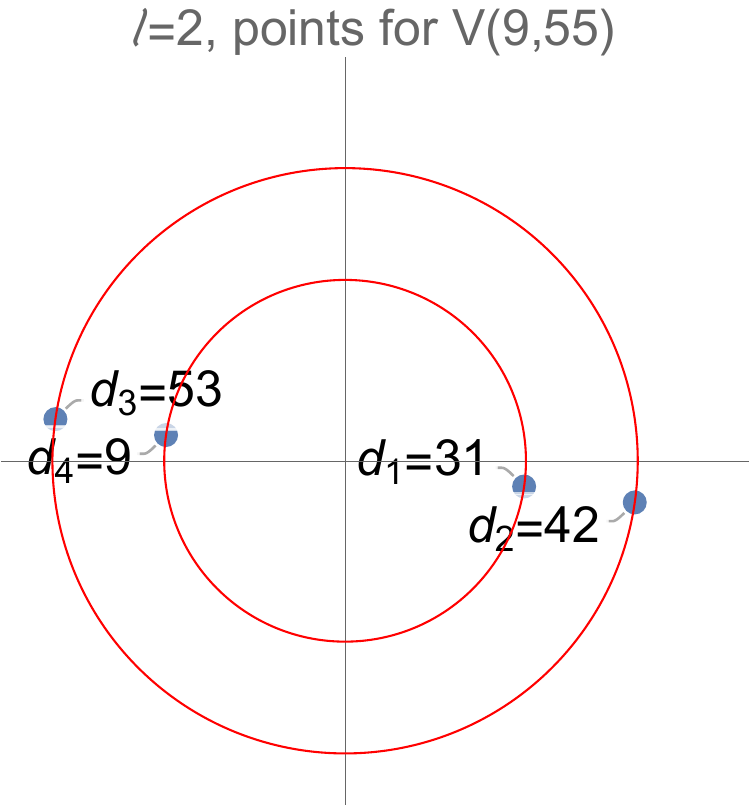}
		\includegraphics[scale=0.35]{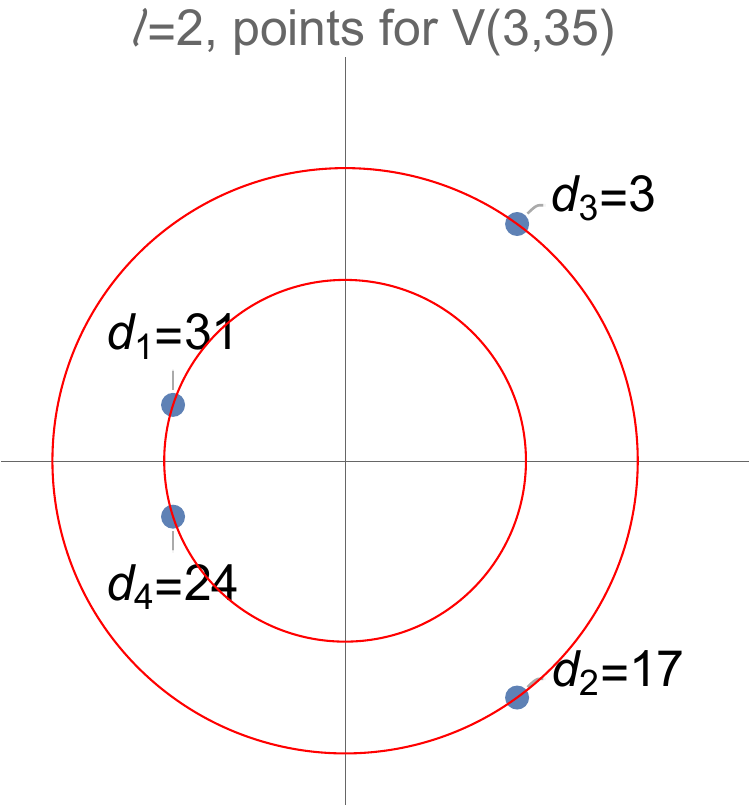}
	\end{center}

	Here we explain the styles. Each graph above has two circles with inner one of radius $\csc(\frac{2\pi}5 )$ and outer one with radius $\csc(\frac\pi5 )$. When $\ell=1$, the value of $P(d_1)$ and $P(d_4)$ will be on the outer circle ($P(d_2)$ and $P(d_3)$ on the inner circle) because the term $P_1(d_j)$ has denominator $\sin(\frac{\pi a_j\ell}5)$. When $\ell=2$, $P(d_1)$ and $P(d_4)$ will be on the inner circle. 
	
	We describe the relative argument differences via the following notation. Let	
	\begin{equation}\label{Arg def}
		\Arg_j(d_u\rightarrow d_v;\ell),\quad \text{ for }j\in\{1,2,3\}, \ u,v\in \{1,2,3,4\}, \text{ and } \ell\in\{1,2\}
	\end{equation}
	be the argument difference (as the proportion of $2\pi$, positive when going counter-clockwise) contributed from $P_j$ going from $d_u$ to $d_v$ when $\ell\in \{1,2\}$. To be precise, if we denote $P_j(d_u)=R_{j,u} \exp(i\Theta_{j,u})$ for $R_{j,u},\Theta_{j,u}\in \R$, then
	\[\Arg_j(d_u\rightarrow d_v;\ell)=\alpha\quad  \Leftrightarrow\quad \Theta_{j,v}-\Theta_{j,u} =\alpha\cdot 2\pi +2k\pi \text{ for some }k\in \Z. \]
	We say that two argument differences equal: $\Arg_j(d_u\rightarrow d_v;\ell)=\Arg_j(d_w\rightarrow d_x;\ell)$ or say $\Arg_j(d_u\rightarrow d_v;\ell)=\alpha$ if their difference is an integer. 
	
	Although the $P_2$ and $P_3$ terms are not affected by the value of $\ell$ in \eqref{P1P2P3}, we still use the notations $\Arg_2(d_u\rightarrow d_v;\ell)$ and $\Arg_e(d_u\rightarrow d_v;\ell)$ to indicate the different cases for $\ell$. Moreover, we define
	\begin{equation}\label{Arg total def}
		\Arg(d_u\rightarrow d_v;\ell)\defeq \sum_{j=1}^3 \Arg_j(d_u\rightarrow d_v;\ell)
	\end{equation}
	as the argument difference in total. 
	
	The following condition ensures Proposition~\ref{Vrc sum equals 0}. 
	\begin{condition}\label{Styles four points mod 5}
		We have the following six styles for the relative position of these four points. 
		\begin{itemize}
			\item $\ell=1$. First graph style: the arguments going $d_1\rightarrow d_2\rightarrow d_3\rightarrow d_4\rightarrow d_1$ are $\frac3{10}$, $\frac1{10}$, $\frac3{10}$, and $\frac3{10}$, respectively. The second graph style is that all the argument differences are $\frac12$, while the third graph style has the reversed order of rotation compared with the first one. 
			\begin{center}
				\begin{tabular}{|c|ccccccccc|}
					\hline
					$\Arg(d_u\rightarrow d_v;1)\searrow$ & $d_1$ & $\rightarrow$ & $d_2$ & $\rightarrow$ & $d_3$ & $\rightarrow$ & $d_4$ & $\rightarrow$ & $d_1$ \\
					\hline
					$c'\equiv 1\Mod 5$  & & $\frac 3{10}$ & & $\frac 1{10}$ & & $\frac 3{10}$ & & $\frac 3{10}$ & \ \\
					[1ex]
					$c'\equiv 2,3\Mod 5$ & & $\frac 12$ & & $\frac 12$ & & $\frac 12$ & & $\frac 12$ & \ \\
					[1ex]
					$c'\equiv 4\Mod 5$ & & $-\frac 3{10}$ & & $-\frac 1{10}$ & & $-\frac 3{10}$ & & $-\frac 3{10}$ & \ \\
					\hline
				\end{tabular}
			\end{center}
			\item $\ell=2$. Here are the styles for the graphs in the second row. 
			\begin{center}
				\begin{tabular}{|c|ccccccccc|}
					\hline
					$\Arg(d_u\rightarrow d_v;2)\searrow$& $d_1$ & $\rightarrow$ & $d_2$ & $\rightarrow$ & $d_3$ & $\rightarrow$ & $d_4$ & $\rightarrow$ & $d_1$ \\
					\hline
					$c'\equiv 3\Mod 5$ & & $-\frac 25$ & & $-\frac 3{10}$ & & $-\frac 25$ & & $\frac 1{10}$ & \ \\
					[1ex]
					$c'\equiv 1,4\Mod 5$ & & $0$ & & $\frac 12$ & & $0$ & & $\frac 12$ & \ \\
					[1ex]
					$c'\equiv 2\Mod 5$ & & $\frac 25$ & & $\frac 3{10}$ & & $\frac 25$ & & $-\frac 1{10}$ & \ \\
					\hline
				\end{tabular}
			\end{center} 
		\end{itemize}
	\end{condition}
	One can check that, whenever the four points on $\C$ satisfy any of the above cases of relative argument differences and corresponding radii, their sum becomes $0$. This can be explained by 
	\[\frac{\cos(\frac{\pi}{10})}{\sin(\frac{2\pi}5)}=\frac{\cos(\frac{3\pi}{10})}{\sin(\frac{\pi}5)}=1,\quad \text{where }\frac 1{\sin(\frac \pi 5)}\text{ and }\frac{1}{\sin(\frac{2\pi}5)}\text{ are the radii.} \]

	Before we divide into the cases, we first claim the following lemma: 
	\begin{lemma}\label{p=5 Argreduction}
		For $\ell\in \{1,2\}$, we have
		\begin{equation}
			\Arg(d_1\rightarrow d_2;\ell)+\Arg(d_4\rightarrow d_3;\ell)=0\quad\text{and}\quad  \Arg(d_1\rightarrow d_3;\ell)+\Arg(d_4\rightarrow d_2;\ell)=0. 
		\end{equation}	
	\end{lemma}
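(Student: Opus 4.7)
The two displayed identities are in fact the same claim rewritten: both amount to $\arg P(d_2) + \arg P(d_3) - \arg P(d_1) - \arg P(d_4) \equiv 0 \pmod{2\pi}$, i.e., $\frac{P(d_1)P(d_4)}{P(d_2)P(d_3)} \in \R_{>0}$. So it suffices to verify this single claim, and I approach it by decomposing $P = P_1 P_2 P_3$ via \eqref{P1P2P3} and handling each factor's contribution separately.

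The $P_3$ contribution is immediate: since $P_3(d) = e(4d/c)$ is linear in $d$ and the choice $d_j = d_1 + (j-1)\beta c'$ forces $d_1 + d_4 = d_2 + d_3$, we get $\frac{P_3(d_1)P_3(d_4)}{P_3(d_2)P_3(d_3)} = 1$. The $P_1$ exponential piece $e(-3c'a\ell^2/10)$ is handled analogously: the pairing $d_j \leftrightarrow a_{\overline{j_{\{5\}}}}$ permutes the $a$-tuple $(a_1,a_2,a_3,a_4) \mapsto (a_1,a_3,a_2,a_4)$, which still satisfies $a_1 + a_4 = a_3 + a_2$, so that piece contributes $1$ as well. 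What remains from $P_1$ is the real sine ratio $\frac{\sin(\pi a_3\ell/5)\sin(\pi a_2\ell/5)}{\sin(\pi a_1\ell/5)\sin(\pi a_4\ell/5)}$. Using $\beta c' \equiv 1 \pmod 5$ and $a_1 \equiv 1 \pmod 5$, each factor $\sin(\pi a_j\ell/5)$ equals $(-1)^{(j-1)K+L\ell}\sin(j\pi\ell/5)$ for integers $K = \ell(\beta c'-1)/5$ and $L = (a_1-1)/5$ independent of $j$; the sign factors cancel between numerator and denominator, and the reduced ratio $\sin(2\pi\ell/5)\sin(3\pi\ell/5)/[\sin(\pi\ell/5)\sin(4\pi\ell/5)]$ simplifies to $\sin^2(2\pi/5)/\sin^2(\pi/5)>0$ for $\ell = 1$ and to its (positive) reciprocal for $\ell = 2$.

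The remaining task is to show $S \defeq s(d_1,c)+s(d_4,c)-s(d_2,c)-s(d_3,c)$ is an even integer, since then $\frac{P_2(d_1)P_2(d_4)}{P_2(d_2)P_2(d_3)} = \exp(-\pi i S) = 1$. Applying \eqref{Congru Dedekind mod theta c} with $\overline{d_{j\{c\}}} = a_{\overline{j_{\{5\}}}}$ together with the cancellations $d_1+d_4 = d_2+d_3$ and $a_1+a_4 = a_2+a_3$ yields $12cS \equiv 0 \pmod{\theta c}$. To strengthen this to $12cS \equiv 0 \pmod{24c}$ (equivalent to $S \in 2\Z$), I would insert \eqref{Congru Dedekind c odd} (when $c$ is odd) or \eqref{Congru Dedekind c even} (when $2^\lambda \| c$) into the alternating sum and then invoke Kronecker reciprocity \eqref{quadratic reciprocity} to pair the resulting quadratic symbols $\bigl(\tfrac{d_j}{c}\bigr)$.

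The main obstacle is this last step: the Dedekind congruences branch on the 2- and 3-adic valuations of $c$, so the quadratic-symbol bookkeeping splits into several subcases. I expect the symmetric pairing $\{d_1,d_4\} \leftrightarrow \{d_2,d_3\}$ to cancel the symbols as well, since $d_1 d_4 \equiv d_2 d_3 \pmod{c'}$ follows from $d_1+d_4 = d_2+d_3$ together with $d_j \equiv j \pmod 5$; but verifying this uniformly across all cases of $c \pmod{24}$ will be where most of the technical work lies.
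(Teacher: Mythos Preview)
Your proposal is correct and follows essentially the same route as the paper: both reduce the lemma to $S := s(d_1,c)+s(d_4,c)-s(d_2,c)-s(d_3,c)\in 2\Z$, dispatch the $P_1$ and $P_3$ contributions via the linear relations $d_1+d_4=d_2+d_3$ and $a_1+a_4=a_2+a_3$, and then attack $12cS\pmod{24c}$ by the case split on $\gcd(c,6)$ using \eqref{Congru Dedekind mod theta c}--\eqref{Congru Dedekind c even} together with quadratic reciprocity. The paper carries out exactly this case analysis you anticipate.

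One caution on a step you state too quickly: when $3\mid c$ (so $\theta=3$), the inverses in \eqref{Congru Dedekind mod theta c} are taken modulo $3c$, not modulo $c$, so $\overline{d_{j\{3c\}}}$ is not literally $a_{\overline{j_{\{5\}}}}$, and the cancellation $a_1+a_4=a_2+a_3$ only gives $12cS\equiv 0\pmod{c}$; knowing the sum also vanishes modulo $3$ does not upgrade this to $3c$ since $3\mid c$ already. The paper closes this by rewriting each adjacent difference via the identity \eqref{Congruece inverse difference}, dividing through by $c'$, and then checking $a_1a_3\equiv a_2a_4\equiv 13\pmod{15}$. So in that subcase you should expect an extra multiplicative step rather than a one-line additive cancellation.
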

	Granted the above reduction, to prove that each case of the argument differences are one of the cases in Condition~\ref{Styles four points mod 5}, we only need to verify that
	\[\Arg(d_1\rightarrow d_4;\ell)\quad \text{and}\quad \Arg(d_1\rightarrow d_2;\ell)\quad {\text{for }}\ell=1,2\]
	satisfy Condition~\ref{Styles four points mod 5}. We prove this by enumerating all of the cases. We can list the argument differences for $\Arg_1$ and $\Arg_3$, but for $\Arg_2$, we require the congruence properties of Dedekind sums in \eqref{Congru Dedekind theta is or not 3}-\eqref{Congru Dedekind c even}.

	\begin{proof}[Proof of Lemma~\ref{p=5 Argreduction}]
		Note that \[\Arg(d_u\rightarrow d_v;\ell)=\Arg(d_u\rightarrow d_w;\ell)+\Arg(d_w\rightarrow d_v;\ell)\]
		for all $u,v,w\in \{1,2,3,4\}$. Then it suffices to prove 
		\[\Arg(d_1\rightarrow d_2;\ell)=\Arg(d_3\rightarrow d_4;\ell).\]
		Recall our notation for $d_j$ and $a_j$ in \eqref{Vrc dj choice jbetac' p=5}. Since $a_3-a_1=a_4-a_2=2\beta c'$, one can show $\Arg_1(d_1\rightarrow d_2;\ell)=\Arg_1(d_3\rightarrow d_4;\ell)$ by
		\[\sgn\(\sin(\tfrac{\pi a_3\ell}5)/\sin(\tfrac{\pi a_1 \ell }5 )\)=\sgn\(\sin(\tfrac{\pi a_4\ell}5)/\sin(\tfrac{\pi a_2 \ell}5 )\)=1. \]
		It is also easy to show $\Arg_3(d_1\rightarrow d_2;\ell)=\Arg_3(d_3\rightarrow d_4;\ell)$.

		For $\Arg_2$, we apply \eqref{Congru Dedekind mod theta c}, \eqref{Congru Dedekind c odd} and \eqref{Congru Dedekind c even} with the Chinese Remainder Theorem to show
		\[12cs(d_2,c)-12cs(d_1,c)\equiv 12cs(d_4,c)-12cs(d_3,c) \Mod{24c} \]
        in the following cases. 
		
		When $\gcd(c,3)=1$, we recall $d_2-d_1=d_4-d_3=\beta c'$ and have
		\begin{align}
			12cs(d_2,c)-12cs(d_1,c)&\equiv d_2+a_3-d_1-a_1\equiv 3\beta c'\Mod c,\label{Preproof 12cs d2-d1, 3 nmid c, mod c}\\
			12cs(d_4,c)-12cs(d_3,c)&\equiv d_4+a_4-d_3-a_2\equiv 3\beta c'\Mod c,\label{Preproof 12cs d4-d3, 3 nmid c, mod c}\\
			12cs(d_2,c)-12cs(d_1,c)&\equiv 12cs(d_4,c)-12cs(d_3,c)\equiv 0\Mod 6. \label{Preproof 12cs d2-d1=d4-d3, 3 nmid c, mod 6}
		\end{align}
		
		When $3|c$, we apply the congruence 
		\begin{equation}\label{Congruece inverse difference}
			\overline{(x+y)_{\{m\}} }-\overline{x_{\{m\}} }\equiv -y\overline{(x+y)_{\{m\}} }\cdot \overline{ x_{\{m\}}}\Mod m 
		\end{equation}
		to compute
		\begin{align*}
			d_2+\overline{d_{2\{3c\}}}-	d_1-\overline{d_{1\{3c\}}}\equiv \beta c'(1-\overline{d_{2\{3c\}}}\cdot \overline{d_{1\{3c\}}})\Mod {3c},\\
			d_4+\overline{d_{4\{3c\}}}-	d_3-\overline{d_{3\{3c\}}}\equiv \beta c'(1-\overline{d_{4\{3c\}}}\cdot\overline{d_{3\{3c\}}})\Mod {3c},
		\end{align*}
		which imply
		\begin{align*}
			12cs(d_2,c)-12cs(d_1,c)\equiv 12cs(d_4,c)-12cs(d_3,c)\equiv 0\Mod {c'}
		\end{align*}
		by \eqref{Congru Dedekind mod theta c}. 
		After dividing by $c'$ (recall that the denominator of $P_2(d)$ is $24c$), we have
		\begin{align}
			60s(d_2,c)-60s(d_1,c)\equiv \beta (1-\overline{d_{2\{3c\}}}\cdot \overline{d_{1\{3c\}}})\equiv \beta(1-a_3a_1) \Mod {15},\label{Preproof 12cs d2-d1, 3|c, mod 15}\\
			60s(d_4,c)-60s(d_3,c)\equiv \beta (1-\overline{d_{4\{3c\}}}\cdot \overline{d_{3\{3c\}}})\equiv \beta(1-a_4a_2) \Mod {15},\label{Preproof 12cs d4-d3, 3|c, mod 15}
		\end{align}
		because of \eqref{ajinverse5 dj =1} and $\overline{x_{\{un\}}}\equiv \overline{x_{\{vn\}}}\Mod n$. Since $a_3\equiv a_1\Mod 3$ and $a_4\equiv a_2\Mod 3$, we have $a_3a_1\equiv a_4a_2\equiv 1\Mod 3$. Moreover, $a_3a_1\equiv a_4a_2\equiv 3\Mod 5$. Hence $a_3a_1\equiv a_4a_2\equiv 13\Mod {15}$ and we get
		\begin{align}
			60s(d_2,c)-60s(d_1,c)\equiv 60s(d_4,c)-60s(d_3,c)\equiv 3\beta\Mod {15}. \label{Preproof 12cs d2-d1=d4-d3, 3|c, mod 15}
		\end{align}
		
		When $c$ is odd, by \eqref{Congru Dedekind c odd} and $d_{j_1}\equiv d_{j_2}\Mod {c'}$, we have
		\begin{align}
			12cs(d_2,c)-12cs(d_1,c)&\equiv 2(\tfrac {d_1}c)-2(\tfrac {d_2}c) \equiv 2(\tfrac 15)(\tfrac {d_1}{c'})-2(\tfrac 25)(\tfrac {d_1}{c'})\equiv 4\Mod 8, \label{Preproof 12cs d2-d1, c odd, mod 8}\\
			12cs(d_4,c)-12cs(d_3,c)&\equiv 2(\tfrac {d_3}c)-2(\tfrac {d_4}c) \equiv 2(\tfrac 35)(\tfrac {d_3}{c'})-2(\tfrac 45)(\tfrac {d_3}{c'})\equiv 4\Mod 8. \label{Preproof 12cs d4-d3, c odd, mod 8}
		\end{align}
		
		When $c$ is even and $2^\lambda\| c$ for $\lambda\geq 1$, by \eqref{Congru Dedekind c even} and \eqref{Congruece inverse difference} we have
		\begin{align*}
			12cs(d_2,c)-12cs(d_1,c)&\equiv d_2+(c^2+3c+1)\overline{d_{2\{8\times 2^\lambda\}}}+2c(\tfrac c{d_2})\overline{d_{2\{8\times 2^\lambda\}}}\\
			&-d_1-(c^2+3c+1)\overline{d_{1\{8\times 2^\lambda\}}}-2c(\tfrac c{d_1})\overline{d_{1\{8\times 2^\lambda\}}}\\
			&\equiv \beta c'(1-(c^2+3c+1)\overline{d_{2\{8\times 2^\lambda\}}}\cdot\overline{d_{1\{8\times 2^\lambda\}}})\\
			&+2c(\tfrac c{d_2})\overline{d_{2\{8\times 2^\lambda\}}}-2c(\tfrac c{d_1})\overline{d_{1\{8\times 2^\lambda\}}}\Mod {8\times 2^\lambda}. 
		\end{align*}
		Since $12cs(d_2,c)-12cs(d_1,c)$ is a multiple of $c'$ by the discussion of $\gcd(c,3)=1$ or $3|c$ above, by dividing $c'$ and by $x^2\equiv 1\Mod 8$ for odd $x$ we have
		\begin{align*}
			60s(d_2,c)-60s(d_1,c)\equiv \beta(1-(c^2+3c+1)d_2d_1)+2(\tfrac {c}{d_2})d_2-2(\tfrac{c}{d_1})d_1\Mod 8. 
		\end{align*}
		Similarly, $12cs(d_4,c)-12cs(d_3,c)$ is a multiple of $c'$ and
		\begin{align*}
			60s(d_4,c)-60s(d_3,c)\equiv \beta(1-(c^2+3c+1)d_4d_3)+2(\tfrac {c}{d_4})d_4-2(\tfrac{c}{d_3})d_3\Mod 8. 
		\end{align*}
		Dividing into cases for $4|c$ or $2\|c$ with $c'\equiv 2$ or $6\Mod 8$, one can conclude \[d_2d_1\equiv d_4d_3\Mod 8. \]
		For the remaining part, we only need to determine $(\frac c{d_j})d_j\equiv \pm 1\Mod 4$ for $j\in \{1,2,3,4\}$. Since $d_3\equiv d_1\Mod 4$ and $d_2\equiv d_4\Mod 4$, it is not hard to show that 
		\[(\tfrac c{d_2})d_2-(\tfrac c{d_1})d_1\equiv (\tfrac c{d_4})d_4-(\tfrac c{d_3})d_3\Mod 4. \]
        Now we have proved that when $c$ is even,
        \begin{align}
            60s(d_2,c)-60s(d_1,c)\equiv 60s(d_4,c)-60s(d_3,c)\Mod 8. \label{Preproof 12cs d2-d1=d4-d3, c even, mod 8}
        \end{align}
		
		Combining \eqref{Preproof 12cs d2-d1, 3 nmid c, mod c}, \eqref{Preproof 12cs d4-d3, 3 nmid c, mod c}, \eqref{Preproof 12cs d2-d1=d4-d3, 3 nmid c, mod 6}, \eqref{Preproof 12cs d2-d1=d4-d3, 3|c, mod 15}, \eqref{Preproof 12cs d2-d1, c odd, mod 8}, \eqref{Preproof 12cs d4-d3, c odd, mod 8}, \eqref{Preproof 12cs d2-d1=d4-d3, c even, mod 8}, we have shown
		\[\Arg_2(d_1\rightarrow d_2;\ell)=\Arg_2(d_3\rightarrow d_4;\ell)\quad \text{for }\ell\in \{1,2\}\]
		by proving 
		\[\frac{12cs(d_2,c)-12cs(d_1,c)}{24c}-\frac{12cs(d_4,c)-12cs(d_3,c)}{24c}\in \Z\]
		in all the cases for $c$ ($2|c$ or $2\nmid c$, $3|c$ or $3\nmid c$). The lemma follows. 
		
	\end{proof}

	Now we begin to prove that $\Arg(d_1\rightarrow d_4;\ell)$ and $\Arg(d_1\rightarrow d_2;\ell)$ both satisfy Condition~\ref{Styles four points mod 5} in all the cases of $5\|c$. 
	
	\subsection{Case \texorpdfstring{$2\nmid c'$, $3\nmid c'$, and $5\nmid c'$}{c' coprime to 2, 3 and 5}}
	
	We first treat the case when $c'\equiv 1,7,11,13,17,19,23,29\Mod {30}$. Recall our notations in \eqref{Vrc dj choice jbetac' p=5} and \eqref{ajinverse5 dj =1}: 
	\[d_4=d_1+3\beta c',\ \ d_2=d_1+\beta c', \ \ a_4=a_1+3\beta c', \ \ a_3=a_1+2\beta c',\ \ \beta c'\equiv 1\Mod 5. \]
	
	The argument differences $\Arg_j(d_1\rightarrow d_4;\ell)$ for $j=1,2,3$ are given by the arguments of 
	\[\frac{e\(-\frac 9{10} \beta c'^2\ell^2\)}{\sgn\(\sin(\frac{\pi a_4\ell}5)/\sin(\frac{\pi a_1 \ell}5 )\)}, \quad e\(-\frac{12cs(d_4,c)-12cs(d_1,c)}{24c}\),\quad \text{and }e\(\frac{2\beta}5\), \]
	respectively. First we have
	\begin{equation}\label{sign change for sin term, d4-d1}
		\sgn\(\sin(\tfrac{\pi a_4\ell}5)/\sin(\tfrac{\pi a_1 \ell}5 )\)=-1\quad \text{whenever }
		\left\{\begin{array}{l}
			\ell=1 \quad \text{and}\\ 
			3\beta c'\equiv 8\Mod {10} \end{array}\right.
		\text{ or }\ell=2. 
	\end{equation}
	This is easy to prove because $3\beta c'\times 2\equiv 6\Mod {10}$. 
	
	By \eqref{Congru Dedekind mod theta c}, we have $\theta=1$ and 
	\begin{equation}\label{12cs d4-d1 in case 1 7 11 13, mod c}
		-12cs(d_4,c)+12cs(d_1,c)\equiv -d_4-a_4+d_1+a_1\equiv -6\beta c'\equiv -\beta c'\Mod c. 
	\end{equation}
	Moreover, we have $-12cs(d_4,c)+12cs(d_1,c)\equiv 0\Mod 6$ and
	\[-12cs(d_4,c)+12cs(d_1,c)\equiv 2\((\tfrac {d_4}c)-(\tfrac {d_1}c)\)\equiv 2\((\tfrac {d_4}{5})(\tfrac {d_4}{c'})-(\tfrac {d_1}{5})(\tfrac {d_1}{c'})\)\equiv 0\Mod 8. \]
	Here we have used $(\frac{d_j}{5})=1$ for $j=1,4$ and $d_j\equiv d_1\Mod{c'}$ for all $j$. Then, 
	\begin{equation}\label{12cs d4-d1 in case 1 7 11 13, mod 24}
		-12cs(d_4,c)+12cs(d_1,c)\equiv 0\Mod{24}. 
	\end{equation}
	Combining \eqref{12cs d4-d1 in case 1 7 11 13, mod c} and \eqref{12cs d4-d1 in case 1 7 11 13, mod 24}, since $c'$ is odd, we can divide both the denominator and numerator in $P_2$ by $24c'$. We obtain
	\[\Arg_2(d_1\rightarrow d_4;\ell)=\frac \beta 5. \]
	
	Now we have Table~\ref{table: d4-d1 2n 3n 5n}. In the row of $\Arg_1(d_1\rightarrow d_4;1)$, we see $+\frac12$ because the sign difference $\sgn\(\sin(\frac{\pi a_4\ell}5)/\sin(\frac{\pi a_1 \ell}5 )\)=-1$ when $3\beta c'\equiv 8\Mod{10}$. The $\Arg_1(d_1\rightarrow d_4;2)$ contains the term $+\frac12$ because $3\beta c'\times 2\equiv 6\Mod {10}$. The upper half of the table is for the case $\ell=1$ and the lower half is for $\ell=2$. 
	\begin{table}[!htbp]
		\centering 
		\begin{tabular}{|c|c|c|c|c|c|c|c|c|}
			\hline
			$c'\Mod{30}$ & 1 & 7 & 11 & 13 & 17 & 19 & 23 & 29 \\
			$\beta$ & 1 & 3 & 1 & 2 & 3 & 4 & 2 & 4 \\
			$3\beta c'\Mod {10 }$ & 3 & 3 & 3 & \boxed{8} & 3 & \boxed{8} & \boxed{8} & \boxed{8}\\
			$-9\beta c'^2\Mod {10}$ & 1 & 7 & 1 & 8 & 7 & 4 & 7 & 4\\
			\hline 	
			$\Arg_1(d_1\rightarrow d_4;1)$ & $\frac 1{10}$ & $-\frac 3{10}$  & $\frac 1{10}$ & $-\frac 2{10}+\frac12$ & $-\frac 3{10}$ & $-\frac 6{10}+\frac12$ & $-\frac 3{10}+\frac12$ & $-\frac 6{10}+\frac12$ \\
			[1ex]
			$\Arg_2(d_1\rightarrow d_4;1)$ & $\frac 15$ & $\frac 35$ & $\frac 15$ & $\frac 25$ & $\frac 35$ & $\frac 45$ & $\frac 25$ & $\frac 45$\\
			[1ex] 
			$\Arg_3(d_1\rightarrow d_4;1)$ & $\frac 25$ & $\frac 15$ & $\frac 25$ & $\frac 45$ & $\frac 15$ & $\frac 35$ & $\frac 45$ & $\frac 35$\\
			[1ex]
			Total $\Arg(d_1\rightarrow d_4;1)$ & $-\frac 3{10}$ & $\frac 12$ & $-\frac 3{10}$ & $\frac 12$ & $\frac 12$ & $\frac 3{10}$ & $\frac 12$ & $\frac 3{10}$\\
			\hhline{|=|=|=|=|=|=|=|=|=|}
			$3\beta c'\Mod {10 }$ & 3 & 3 & 3 & 8 & 3 & 8 & 8 & 8\\
			$-18\beta c'^2\equiv 2c'\Mod {5}$ & 2 & 4 & 2 & 1 & 4 & 3 & 1 & 3\\
			\hline 	
			$\Arg_1(d_1\rightarrow d_4;2):\frac12 +\frac{2c'}5$ & $-\frac1{10}$ & $\frac 3{10}$  & $-\frac 1{10}$ & $-\frac 3{10}$ & $\frac 3{10}$ & $\frac 1{10}$ & $-\frac 3{10}$ & $\frac 1{10}$ \\
			[1ex]
			$\Arg_2(d_1\rightarrow d_4;2)$ & $\frac 15$ & $\frac 35$ & $\frac 15$ & $\frac 25$ & $\frac 35$ & $\frac 45$ & $\frac 25$ & $\frac 45$\\
			[1ex] 
			$\Arg_3(d_1\rightarrow d_4;2)$ & $\frac 25$ & $\frac 15$ & $\frac 25$ & $\frac 45$ & $\frac 15$ & $\frac 35$ & $\frac 45$ & $\frac 35$\\
			[1ex]
			Total $\Arg(d_1\rightarrow d_4;2)$ & $\frac 12$ & $\frac 1{10}$ & $\frac 12$ & $-\frac 1{10}$ & $\frac 1{10}$ & $\frac 12$ & $-\frac 1{10}$ & $\frac 12$\\
			\hline
		\end{tabular}
		\vspace{0.5ex}
		\caption{Table for $\Arg(d_1\rightarrow d_4;\ell)$; $2\nmid c$, $3\nmid c$, $5\nmid c$.  }
		\label{table: d4-d1 2n 3n 5n}
	\end{table}
	
	For $\Arg_j(d_1\rightarrow d_2;\ell)$, recall $a_3d_2\equiv 1\Mod c$. The argument differences $\Arg_j(d_1\rightarrow d_2;\ell)$ for $j=1,2,3$ are given by
	\[\frac{e\(-\frac 3{5} \beta c'^2\ell^2\)}{\sgn\(\sin(\frac{\pi a_3\ell}5)/\sin(\frac{\pi a_1 \ell}5 )\)}, \quad e\(-\frac{12cs(d_2,c)-12cs(d_1,c)}{24c}\),\quad e\(\frac{4\beta}5\),\]
	respectively. Since $2\beta c'\ell\equiv 2\ell\Mod{10}$, we always have
	\begin{equation}\label{sign change for sin term, d2-d1}
		\sgn\(\sin(\tfrac{\pi a_3}5)/\sin(\tfrac{\pi a_1 }5 )\)=1 \quad\text{and}\quad  \sgn\(\sin(\tfrac{2\pi a_3}5)/\sin(\tfrac{2\pi a_1 }5 )\)=-1. 
	\end{equation} 
	
	Moreover, from \eqref{Congru Dedekind mod theta c} we have
\begin{align}
    \label{12cs d2-d1 in case 1 7 11 13, mod c}
    12cs(d_2,c)-12cs(d_1,c)&\equiv d_2+a_3-d_1-a_1\equiv 3\beta c'\Mod c,  \\
    12cs(d_2,c)-12cs(d_1,c)&\equiv -2\(-(\tfrac{d_2}{c'})-(\tfrac{d_1}{c'})\)\equiv 4\Mod 8,  \\
    12cs(d_2,c)-12cs(d_1,c)&\equiv 0\Mod 6, \quad \text{and}\\
    \label{12cs d2-d1 in case 1 7 11 13, mod 24}
    12cs(d_2,c)-12cs(d_1,c)&\equiv 12\Mod{24}.  
\end{align}
	Combining \eqref{12cs d2-d1 in case 1 7 11 13, mod c} and \eqref{12cs d2-d1 in case 1 7 11 13, mod 24}, we divide by $c'$ and determine the unique value modulo $120$: 
	\[-(60s(d_2,c)-60s(d_1,c))\text{ \ \ congruent to \ \ }-3\beta\Mod 5\text{\ \ and \ \ }12\Mod{24}.  \]
	This gives the contribution of the argument difference from $P_2$. Now we can make Table~\ref{table: d2-d1 2n 3n 5n}. 
	
	\begin{table}[!htbp]
		\centering 
		\begin{tabular}{|c|c|c|c|c|c|c|c|c|}
			\hline
			$c'\Mod{30}$ & 1 & 7 & 11 & 13 & 17 & 19 & 23 & 29 \\
			$\beta$ & 1 & 3 & 1 & 2 & 3 & 4 & 2 & 4 \\
			$-3c'\Mod 5$ & 2 & 4 & 2 & 1 & 4 & 3 & 1 & 3 \\
			\hline 	
			$\Arg_1(d_1\rightarrow d_2;1)$ & $\frac 25$ & $\frac 45$ & $\frac 25$ & $\frac 15$ & $\frac 45$ & $\frac 35$ & $\frac 15$ & $\frac 35$\\
			[1ex]
			$\Arg_2(d_1\rightarrow d_2;1)$ & $\frac 1{10}$ & $\frac 3{10}$ & $\frac 1{10}$ & $-\frac 3{10}$ & $\frac 3{10}$ & $-\frac 1{10}$ & $-\frac 3{10}$ & $-\frac 1{10}$\\
			[1ex] 
			$\Arg_3(d_1\rightarrow d_2;1)$ & $\frac 45$ & $\frac 25$ & $\frac 45$ & $\frac 35$ & $\frac 25$ & $\frac 15$ & $\frac 35$ & $\frac 15$\\
			[1ex]
			Total $\Arg(d_1\rightarrow d_2;1)$ & $\frac 3{10}$ & $\frac 12$ & $\frac 3{10}$ & $\frac 12$ & $\frac 12$ & $-\frac 3{10}$ & $\frac 12$ & $-\frac 3{10}$\\
			\hhline{|=|=|=|=|=|=|=|=|=|}
			$-3c'\times 4\Mod 5$ & 3 & 1 & 3 & 4 & 1 & 2 & 4 & 2 \\
			\hline 	
			$\Arg_1(d_1\rightarrow d_2;2): \frac12-\frac{12c'}5 $ & $ \frac 1{10}$ & $ -\frac 3{10}$ & $\frac 1{10}$ & $\frac 3{10}$ & -$\frac 3{10}$ & $-\frac 1{10}$ & $\frac 3{10}$ & $-\frac 1{10}$\\
			[1ex]
			$\Arg_2(d_1\rightarrow d_2;2)$ & $\frac 1{10}$ & $\frac 3{10}$ & $\frac 1{10}$ & $-\frac 3{10}$ & $\frac 3{10}$ & $-\frac 1{10}$ & $-\frac 3{10}$ & $-\frac 1{10}$\\
			[1ex] 
			$\Arg_3(d_1\rightarrow d_2;2)$ & $\frac 45$ & $\frac 25$ & $\frac 45$ & $\frac 35$ & $\frac 25$ & $\frac 15$ & $\frac 35$ & $\frac 15$\\
			[1ex]
			Total $\Arg(d_1\rightarrow d_2;2)$ & $0$ & $\frac 25$ & $0$ & $-\frac 25$ & $\frac 25$ & $0$ & $-\frac 25$ & $0$\\
			\hline
		\end{tabular}
		\vspace{0.5ex}
		\caption{Table for $\Arg(d_1\rightarrow d_2;\ell)$; $2\nmid c$, $3\nmid c$, $5\nmid c$.  }
		\label{table: d2-d1 2n 3n 5n}
	\end{table}
	
	Combining Table \ref{table: d4-d1 2n 3n 5n} and Table~\ref{table: d2-d1 2n 3n 5n}, we see that $\Arg(d_1\rightarrow d_4;\ell)$ and $\Arg(d_1\rightarrow d_2;\ell)$ for $\ell=1,2$ satisfy the styles in Condition~\ref{Styles four points mod 5}. This finishes the proof when $2\nmid c'$, $3\nmid c'$ and $5\nmid c'$.

	\subsection{Case \texorpdfstring{$2\nmid c'$, $3| c'$, and $5\nmid c'$}{c' is coprime to 10 but divisible by 3}}
    
	These are the cases when $c'\equiv 3,9,21,27\Mod {30}$. The process is similar as the former subsection, so we just give Table~\ref{table: d4-d1 2n 3y 5n} and Table~\ref{table: d2-d1 2n 3y 5n} below for the values of argument differences in each case. The details in calculation can be found in the author's GitHub repository \cite{QihangKLsumsGitHub}.


	\begin{table}[!htbp]
		\centering 
		\begin{tabular}{|c|c|c|c|c|}
			\hline
			$c'\Mod{30}$ & 3 & 9 & 21 & 27 \\
			$\beta$ & 2 & 4 & 1 & 3\\
			$3\beta c'\Mod {10 }$ & 8 & 8 & 3 & 3 \\
			$-9\beta c'^2\Mod {10}$ & 8 & 4 & 1 & 7 \\
			\hline 	
			$\Arg_1(d_1\rightarrow d_4;1)$ & $-\frac 2{10}+\frac12$ & $\frac 4{10}-\frac 12$  & $\frac 1{10}$ & $-\frac 3{10}$ \\
			[1ex]
			$(\Arg_2+\Arg_3)(d_1\rightarrow d_4;1)$: $\frac{3\beta}5$ & $\frac 15$ & $\frac 25$ & $\frac 35$ & $\frac 45$ \\
			[1ex]
			Total $\Arg(d_1\rightarrow d_4;1)$ & $\frac12$ & $\frac 3{10}$ & $-\frac 3{10}$ & $\frac 12$ \\
			\hhline{|=|=|=|=|=|}
			$3\beta c'\Mod {10 }$ & 8 & 8 & 3 & 3 \\
			$-18\beta c'^2\Mod {5}$ & 1 & 3 & 2 & 4 \\
			\hline 	
			$\Arg_1(d_1\rightarrow d_4;2)$ & $-\frac 3{10}$ & $\frac 1{10}$  & $-\frac 1{10}$ & $\frac 3{10}$ \\
			[1ex]
			$(\Arg_2+\Arg_3)(d_1\rightarrow d_4;2)$: $\frac{3\beta}5$ & $\frac 15$ & $\frac 25$ & $\frac 35$ & $\frac 45$ \\
			[1ex]
			Total $\Arg(d_1\rightarrow d_4;2)$ & $-\frac1{10}$ & $\frac 12$ & $\frac 12$ & $\frac 1{10}$ \\
			\hline
		\end{tabular}
		\vspace{0.5ex}
		\caption{Table for $\Arg(d_1\rightarrow d_4;\ell)$; $2\nmid c$, $3| c$, $5\nmid c$.  }
		\label{table: d4-d1 2n 3y 5n}
	\end{table}


	\begin{table}[!htbp]
		\centering 
		\begin{tabular}{|c|c|c|c|c|}
			\hline
			$c'\Mod{30}$ & 3 & 9 & 21 & 27 \\
			$\beta$ & 2 & 4 & 1 & 3\\
			$-3c'\Mod {5 }$ & 1 & 3 & 2 & 4 \\
			\hline 	
			$\Arg_1(d_1\rightarrow d_2;1)$ & $\frac 15$ & $\frac 35$  & $\frac 25$ & $\frac 45$ \\
			[1ex]
			$\Arg_2(d_1\rightarrow d_2;1)$ & $-\frac 3{10}$ & $-\frac 1{10}$ & $\frac 1{10}$ & $\frac 3{10}$ \\
			[1ex]
			$\Arg_3(d_1\rightarrow d_2;1)$ & $\frac 35$ & $\frac 15$ & $\frac 45$ & $\frac 25$ \\
			[1ex]
			Total $\Arg(d_1\rightarrow d_2;1)$ & $\frac12$ & $-\frac 3{10}$ & $\frac 3{10}$ & $\frac 12$ \\
			\hhline{|=|=|=|=|=|}
			$-12c'\Mod {5 }$ & 4 & 2 & 3 & 1 \\
			\hline 	
			$\Arg_1(d_1\rightarrow d_2;2)$ & $\frac 3{10}$ & $-\frac 1{10}$  & $\frac 1{10}$ & $-\frac 3{10}$ \\
			[1ex]
			$\Arg_2(d_1\rightarrow d_2;2)$ & $-\frac 3{10}$ & $-\frac 1{10}$ & $\frac 1{10}$ & $\frac 3{10}$ \\
			[1ex]
			$\Arg_3(d_1\rightarrow d_2;2)$ & $\frac 35$ & $\frac 15$ & $\frac 45$ & $\frac 25$ \\
			[1ex]
			Total $\Arg(d_1\rightarrow d_2;2)$ & $-\frac 25$ & $0$ & $0$ & $\frac 25$ \\
			\hline
		\end{tabular}
		\vspace{0.5ex}
		\caption{Table for $\Arg(d_1\rightarrow d_2;\ell)$; $2\nmid c$, $3| c$, $5\nmid c$.  }
		\label{table: d2-d1 2n 3y 5n}
	\end{table}
	
	
	\subsection{Case \texorpdfstring{$2|c'$, $3\nmid c'$, and $5\nmid c'$}{c' is coprime to 15 but even}} 
	These are the cases $c'\equiv 2,4,8,14,16,22,26,28\Mod {30}$. For $\Arg_1(d_1\rightarrow d_4;\ell)$ we still use \eqref{sign change for sin term, d4-d1}.
	By \eqref{Congru Dedekind mod theta c}, $\theta=1$ and we still have
	\begin{equation}\label{12cs d4-d1 in case 2 4 8 14, mod c}
		-(12cs(d_4,c)-12cs(d_1,c))\equiv -(d_4+a_4-d_1-a_1)\equiv -6\beta c'\equiv -\beta c'\Mod c,  
	\end{equation}
	and $12cs(d,c)\equiv 0\Mod 6$. 
	Define the integer $\lambda\geq 1$ by $2^\lambda\|c$. To determine the value modulo $24c$, we need to determine it modulo $8\times 2^\lambda$. By \eqref{Congru Dedekind c even} we have
	\begin{align}
		\label{12cs d4-d1 in case 2 4 8 14, mod 8 larger}
		\begin{split}
			12cs(d_4,c)-12cs(d_1,c)&\equiv d_4-d_1+(c^2+3c+1)(\overline{d_{4\{8\times 2^\lambda\}}}-\overline{d_{1\{8\times 2^\lambda\}}})\\
			&+2c\(\overline{d_{4\{8\times 2^\lambda\}}}(\tfrac c{d_4})-\overline{d_{1\{8\times 2^\lambda\}}}(\tfrac c{d_1})\)\Mod{8\times 2^\lambda}\\
			&\equiv 3\beta c'\(1-(c^2+3c+1)\overline{d_{4\{8\times 2^\lambda\}} }\cdot \overline{d_{1\{8\times 2^\lambda\}}}\)\\
			&+2c\(\overline{d_{4\{8\times 2^\lambda\}}}(\tfrac c{d_4})-\overline{d_{1\{8\times 2^\lambda\}}}(\tfrac c{d_1})\)\Mod{8\times 2^\lambda}.  
		\end{split}
	\end{align}
	We claim that 
	\begin{equation}\label{12cs d4-d1 in case 2 4 8 14, mod 8 2 lambda}
		12cs(d_4,c)-12cs(d_1,c)\equiv 0\Mod{8\times 2^\lambda}.
	\end{equation}
	To see this, since $2^\lambda\|c'$ and $c'|(12cs(d_4,c)-12cs(d_1,c))$ by \eqref{12cs d4-d1 in case 2 4 8 14, mod c}, we divide \eqref{12cs d4-d1 in case 2 4 8 14, mod 8 larger} by $c'$ and obtain
	\begin{align*}
		60\(s(d_4,c)-s(d_1,c)\)&\equiv 3\beta\(1-(c^2+3c+1)d_4d_1\)+2\(d_4(\tfrac c{d_4})-d_1(\tfrac c{d_1})\)\\
		&\equiv 3\beta c'\(3\beta d_1-1\)(c'-1)+2\(d_4(\tfrac c{d_4})-d_1(\tfrac c{d_1})\)\Mod 8. 
	\end{align*}
	Define val$\defeq 3\beta c'\(3\beta d_1-1\)(c'-1)\Mod 8$ as the first part of the congruence above. Note that both $d_1$ and $c'-1$ are odd. We have Table~\ref{table: d4-d1 2y 3n 5n, inner case first part 4 mod 8} for val.
	\begin{table}[!htbp]
		\centering 
		\begin{tabular}{|c|c|c|c|c|}
			\hline
			$c'\Mod{5}$ & 1 & 2 & 3 & 4 \\
			$\beta$ & 1 & 3 & 2 & 4\\
			$3\beta c'$ & $3c'$ & $6c'$ & $9c'$ & $12c'$ \\
			$3\beta d_1-1\Mod 2$ & $3d_1-1$ & $6d_1-1$ & $9d_1-1$ & $12d_1-1$ \\
			\hline 	
			$2\|c$, $d_1\equiv 1\Mod 4$ & 4 & 4  & 0 & 0 \\
			$2\|c$, $d_1\equiv 3\Mod 4$  & 0 & 4  & 4 & 0 \\
			\hline
			$4|c$;  & 0 & 0 & 0 &0 \\
			\hline
		\end{tabular}
		\vspace{0.5ex}
		\caption{Table of val$\defeq 3\beta c'\(3\beta d_1-1\)(c'-1)\Mod 8$; $2| c$, no requirement for $(c,3)$, $5\nmid c$.  }
		\label{table: d4-d1 2y 3n 5n, inner case first part 4 mod 8}
	\end{table}
	
	For the second part we only need to determine $d_4(\tfrac c{d_4})-d_1(\tfrac c{d_1})\Mod 4$. When $4|c$, we get $(\frac{2^\lambda}{d_4})=(\frac{2^{\lambda}}{d_1})=1$. By quadratic reciprocity, 
	\begin{align*}
		d_4(\tfrac c{d_4})-d_1(\tfrac c{d_1})&\equiv d_1\((\tfrac 5{d_4}) \(\tfrac{c'/2^\lambda}{d_4}\)-(\tfrac 5{d_4})\(\tfrac{c'/2^\lambda}{d_1}\)\)\\
		&\equiv d_1\(\tfrac{d_1}{c'/2^\lambda}\)\((-1)^{(d_4-1)(\frac {c'}{2^{\lambda}} -1)/4}-(-1)^{(d_1-1)(\frac {c'}{2^{\lambda}} -1)/4}\)\equiv 0\Mod 4
	\end{align*}
	where the last equality follows since $\frac{d_4-1}2$ and $\frac{d_1-1}2$ have the same parity. This gives the last row in Table~\ref{table: d4-d1 2y 3n 5n, inner case first part 4 mod 8}. 
	
	When $2\|c$, recall that $d_4=d_1+3\beta c'$, from which
	\begin{equation}\label{12cs d4-d1 in case 2 4 8 14, mod 8, second part reciprocity}
		d_4(\tfrac c{d_4})-d_1(\tfrac c{d_1})\equiv (\tfrac{d_1}{c'/2})\((\tfrac{2}{d_4})(-1)^{(d_4-1)(\frac {c'}{2} -1)/4}d_4-(\tfrac{2}{d_1})(-1)^{(d_1-1)(\frac {c'}{2} -1)/4}d_1\) \Mod 4. 
	\end{equation}
	When $c'\equiv 2\Mod 8$, $\frac{c'/2-1}2$ is even and \eqref{12cs d4-d1 in case 2 4 8 14, mod 8, second part reciprocity} becomes $(\frac {2}{d_4})d_4-(\frac{2}{d_1})d_1\Mod 4$; when $c'\equiv 6\Mod 8$, $\frac{c'/2-1}2$ is odd and \eqref{12cs d4-d1 in case 2 4 8 14, mod 8, second part reciprocity} becomes $(\frac {2}{d_4})(-1)^{\frac{d_4-1}2}d_4-(\frac{2}{d_1})(-1)^{\frac{d_1-1}2}d_1\Mod 4$. Since $c=5c'\equiv c'\Mod 8$, we can use $d_4=d_1+3\beta c'$ to determine $d_4\Mod 8$ and get Table~\ref{table: d4-d1 2y 3n 5n, inner case second part 4 mod 8}. 
	\begin{table}[!htbp]
		\centering 
		\begin{tabular}{|c|cccc|cccc|}
			\hline
			\quad \eqref{12cs d4-d1 in case 2 4 8 14, mod 8, second part reciprocity}  $\searrow$ & \multicolumn{4}{c|}{$c'\equiv 2\Mod 8$} & \multicolumn{4}{c|}{$c'\equiv 6\Mod 8$} \\
			\hline
			$d_1\Mod 8$ & 1 & 3 & 5 & 7 & 1 & 3 & 5 & 7 \\
			\hline
			$\beta =1$ & 2 & 0 & 2 & 0 & 2 & 0 & 2 & 0 \\
			$\beta =2$ & 2 & 2 & 2 & 2 & 2 & 2 & 2 & 2 \\
			$\beta =3$ & 0 & 2 & 0 & 2 & 0 & 2 & 0 & 2 \\
			$\beta =4$ & 0 & 0 & 0 & 0 & 0 & 0 & 0 & 0 \\
			\hline 	
		\end{tabular}
		\vspace{0.5ex}
		\caption{Table for \eqref{12cs d4-d1 in case 2 4 8 14, mod 8, second part reciprocity}; $2| c$, no requirement for $(c,3)$, $5\nmid c$.  }
		\label{table: d4-d1 2y 3n 5n, inner case second part 4 mod 8}
	\end{table}
	
	Combining Table~\ref{table: d4-d1 2y 3n 5n, inner case first part 4 mod 8} and Table~\ref{table: d4-d1 2y 3n 5n, inner case second part 4 mod 8}, we prove \eqref{12cs d4-d1 in case 2 4 8 14, mod 8 2 lambda}. 
	Recall \eqref{Congru Dedekind theta is or not 3} and \eqref{12cs d4-d1 in case 2 4 8 14, mod c}, we divide both the denominator and numerator in $P_2$ by $24c'$ and get $\Arg_2(d_1\rightarrow d_4;\ell)=\frac{\beta }5$. 
	Since $\Arg_3(d_1\rightarrow d_4;\ell)=\frac{2\beta }5$, we have Table~\ref{table: d4-d1 2y 3n 5n}. 
	\begin{table}[!htbp]
		\centering 
		\begin{tabular}{|c|c|c|c|c|c|c|c|c|}
			\hline
			$c'\Mod{30}$ & 2 & 4 & 8 & 14 & 16 & 22 & 26 & 28 \\
			$\beta$ & 3 & 4 & 2 & 4 & 1 & 3 & 1 & 2 \\
			$3\beta c'\Mod {10 }$ & 8 & 8 & 8 & 8 & 8 & 8 & 8 & 8\\
			$-9\beta c'^2\Mod {10}$ & 2 & 4 & 8 & 4 & 6 & 2 & 6 & 8\\
			\hline 	
			$\Arg_1(d_1\rightarrow d_4;1)$ & $-\frac 3{10}$ & $-\frac 1{10}$  & $\frac 3{10}$ & $-\frac 1{10}$ & $\frac 1{10}$ & $-\frac 3{10}$ & $\frac 1{10}$ & $\frac 3{10}$ \\
			[1ex]
			$(\Arg_2+\Arg_3)(d_1\rightarrow d_4;1)$: $\frac {3\beta}5$ & $\frac 45$ & $\frac 25$ & $\frac 15$ & $\frac 25$ & $\frac 35$ & $\frac 45$ & $\frac 35$ & $\frac 15$\\
			[1ex]
			Total $\Arg(d_1\rightarrow d_4;1)$ & $\frac 12$ & $\frac 3{10}$ & $\frac12$ & $\frac 3{10}$ & $-\frac 3{10}$ & $\frac 12$ & $-\frac 3{10}$ & $\frac 12$ \\
			\hhline{|=|=|=|=|=|=|=|=|=|}
			$-18\beta c'^2\equiv 2c'\Mod {5}$ & 4 & 3 & 1 & 3 & 2 & 4 & 2 & 1\\
			\hline 	
			$\Arg_1(d_1\rightarrow d_4;2)$ & $\frac 3{10}$ & $\frac 1{10}$  & $-\frac 3{10}$ & $\frac 1{10}$ & $-\frac 1{10}$ & $\frac 3{10}$ & $-\frac 1{10}$ & $-\frac 3{10}$ \\
			[1ex]
			$(\Arg_2+\Arg_3)(d_1\rightarrow d_4;2)$: $\frac {3\beta}5$ & $\frac 45$ & $\frac 25$ & $\frac 15$ & $\frac 25$ & $\frac 35$ & $\frac 45$ & $\frac 35$ & $\frac 15$\\
			[1ex]
			Total $\Arg(d_1\rightarrow d_4;2)$ & $\frac 1{10}$ & $\frac 12$ & $-\frac1{10}$ & $\frac 12$ & $\frac 12$ & $\frac 1{10}$ & $\frac 12$ & $-\frac 1{10}$ \\
			\hline 
		\end{tabular}
		\vspace{0.5ex}
		\caption{Table for $\Arg(d_1\rightarrow d_4;2)$; $2|c$, $3\nmid c$, $5\nmid c$.  }
		\label{table: d4-d1 2y 3n 5n}
	\end{table}
	
	Next we deal with $\Arg(d_1\rightarrow d_2;\ell)$.  
	For $\Arg_1(d_1\rightarrow d_2;\ell)$, we still use \eqref{sign change for sin term, d2-d1}.
	By \eqref{Congru Dedekind mod theta c}, 
	\begin{equation}\label{12cs d2-d1 in case 2 4 8 14, mod c}
		-(12cs(d_2,c)-12cs(d_1,c))\equiv -(d_2+a_3-d_1-a_1)\equiv -3\beta c'\equiv 2\beta c'\Mod c. 
	\end{equation}
	This congruence shows that $12cs(d_2,c)-12cs(d_1,c)$ is divisible by $c'$. 
	Denote $\lambda$ by $2^\lambda\|c$. We claim that 
	\begin{equation}\label{12cs d2-d1 in case 2 4 8 14, mod 8 exact}
		-(12cs(d_2,c)-12cs(d_1,c))\equiv 4\times 2^\lambda  \Mod{8\times 2^\lambda}. 
	\end{equation}
	To prove \eqref{12cs d2-d1 in case 2 4 8 14, mod 8 exact}, we apply \eqref{Congru Dedekind c even} to get 
	\begin{align}
		\label{12cs d2-d1 in case 2 4 8 14, mod 8 larger}
		\begin{split}
			12cs(d_2,c)-12cs(d_1,c)	&\equiv \beta c'\(1-(c^2+3c+1)\overline{d_{4\{8\times 2^\lambda\}}}\cdot \overline{d_{1\{8\times 2^\lambda\}}}\)\\
			&+2c\(\overline{d_{2\{8\times 2^\lambda\}}}(\tfrac c{d_2})-\overline{d_{1\{8\times 2^\lambda\}}}(\tfrac c{d_1})\)\Mod{8\times 2^\lambda}.  
		\end{split}
	\end{align}
	Then as in \eqref{12cs d4-d1 in case 2 4 8 14, mod 8 larger}, we have
	\begin{equation}
		60s(d_2,c)-60s(d_1,c)\equiv  \beta c'(\beta d_1-1)(c'-1)+2\(d_2(\tfrac c{d_2})-d_1(\tfrac c{d_1})\)\Mod 8. 
	\end{equation}
	See Table~\ref{table: d2-d1 2y 3n 5n, inner case first part 4 mod 8} for the first term val$\defeq \beta c'\(\beta d_1-1\)(c'-1)\Mod 8$ and note that $d_1$ and $c'-1$ are both odd. 
	\begin{table}[!htbp]
		\centering 
		\begin{tabular}{|c|c|c|c|c|}
			\hline
			$c'\Mod{5}$ & 1 & 2 & 3 & 4 \\
			$\beta$ & 1 & 3 & 2 & 4\\
			$\beta c'$ & $c'$ & $3c'$ & $2c'$ & $4c'$ \\
			$\beta d_1-1$  & $d_1-1$ & $3d_1-1$ & $2d_1-1$ & $4d_1-1$ \\
			\hline 	
			$2\|c$, $d_1\equiv 1\Mod 4$ & 0 & 4  & 4 & 0 \\
			$2\|c$, $d_1\equiv 3\Mod 4$  & 4 & 0  & 4 & 0 \\
			\hline
			$4|c$ & 0 & 0 & 0 &0 \\
			\hline
		\end{tabular}
		\vspace{0.5ex}
		\caption{Table for val.$\defeq \beta c'\(\beta d_1-1\)(c'-1)\Mod 8$; $2| c$, no requirement for $(c,3)$, $5\nmid c$.  }
		\label{table: d2-d1 2y 3n 5n, inner case first part 4 mod 8}
	\end{table}
	
	For the second term $2\(d_2(\tfrac c{d_2})-d_1(\tfrac c{d_1})\)\Mod 8$, we argue as above using the quadratic reciprocity \eqref{quadratic reciprocity} and omit the details. Combining \eqref{Congru Dedekind theta is or not 3}, \eqref{12cs d2-d1 in case 2 4 8 14, mod c} and \eqref{12cs d2-d1 in case 2 4 8 14, mod 8 exact}, we have
	\[-(12cs(d_2,c)-12cs(d_1,c))\equiv 12\times 2^\lambda  \Mod{24\times 2^\lambda}.  \] 
	After dividing $c'$, $-60s(d_2,c)+60s(d_1,c)\Mod {120}$ is uniquely determined by $2\beta \Mod 5$ and $12\Mod{24}$. Hence
	\[\Arg_2(d_1\rightarrow d_2;\ell)=\frac{1,7,3,9}{10},\quad \text{for }\beta=1,2,3,4, \text{ respectively},\]
	and we get Table~\ref{table: d2-d1 2y 3n 5n}. 
	\begin{table}[!htbp]
		\centering 
		\begin{tabular}{|c|c|c|c|c|c|c|c|c|}
			\hline
			$c'\Mod{30}$ & 2 & 4 & 8 & 14 & 16 & 22 & 26 & 28 \\
			$\beta$ & 3 & 4 & 2 & 4 & 1 & 3 & 1 & 2 \\
			$2\beta c'\Mod {10 }$ & 2 & 2 & 2 & 2 & 2 & 2 & 2 & 2\\
			$-3\beta c'^2\equiv 2c'\Mod {5}$ & 4 & 3 & 1 & 3 & 2 & 4 & 2 & 1\\
			\hline 	
			$\Arg_1(d_1\rightarrow d_2;1)$ & $\frac 45$ & $\frac 35$ & $\frac 15$ & $\frac 35$ & $\frac 25$ & $\frac 45$ & $\frac 25$ & $\frac 15$\\
			[1ex]
			$\Arg_2(d_1\rightarrow d_2;1)$ & $\frac 3{10}$ & $-\frac 1{10}$ & $-\frac 3{10}$ & $-\frac 1{10}$ & $\frac 1{10}$ & $\frac 3{10}$ & $\frac 1{10}$ & $-\frac 3{10}$\\
			[1ex]
			$\Arg_3(d_1\rightarrow d_2;1)$ & $\frac 25$ & $\frac 15$ & $\frac 35$ & $\frac 15$ & $\frac 45$ & $\frac 25$ & $\frac 45$ & $\frac 35$\\
			[1ex]
			Total $\Arg(d_1\rightarrow d_2;1)$& $\frac 12$ & $-\frac 3{10}$ & $\frac12$ & $-\frac 3{10}$ & $\frac 3{10}$ & $\frac 12$ & $\frac 3{10}$ & $\frac 12$ \\
			\hhline{|=|=|=|=|=|=|=|=|=|}
			$-12\beta c'^2\equiv 3c'\Mod {5}$ & 1 & 2 & 4 & 2 & 3 & 1 & 3 & 4\\
			\hline 	
			$\Arg_1(d_1\rightarrow d_2;2)$ & $-\frac 3{10}$ & $-\frac 1{10}$ & $\frac 3{10}$ & $-\frac 1{10}$ & $\frac 1{10}$ & $-\frac 3{10}$ & $\frac 1{10}$ & $\frac 3{10}$\\
			[1ex]
			$\Arg_2(d_1\rightarrow d_2;2)$ & $\frac 3{10}$ & $-\frac 1{10}$ & $-\frac 3{10}$ & $-\frac 1{10}$ & $\frac 1{10}$ & $\frac 3{10}$ & $\frac 1{10}$ & $-\frac 3{10}$\\
			[1ex]
			$\Arg_3(d_1\rightarrow d_2;2)$ & $\frac 25$ & $\frac 15$ & $\frac 35$ & $\frac 15$ & $\frac 45$ & $\frac 25$ & $\frac 45$ & $\frac 35$\\
			[1ex]
			Total $\Arg(d_1\rightarrow d_2;2)$ & $\frac 25$ & $0$ & $-\frac 25$ & $0$ & $0$ & $\frac 25$ & $0$ & $-\frac 25$ \\
			\hline
		\end{tabular}
		\vspace{0.5ex}
		\caption{Table for $\Arg(d_1\rightarrow d_2;\ell)$; $2|c$, $3\nmid c$, $5\nmid c$.  }
		\label{table: d2-d1 2y 3n 5n}
	\end{table}
	
	Combining Table~\ref{table: d4-d1 2y 3n 5n} and Table~\ref{table: d2-d1 2y 3n 5n}, we confirm that Condition~\ref{Styles four points mod 5} is satisfied in these cases. 
	
	\subsection{Case \texorpdfstring{$2|c'$, $3|c'$, and $5\nmid c'$}{c' is coprime to 5 but divisible by 6}}
	These are the cases $c'\equiv 6,12,18,24\Mod {30}$. The process is quite similar as the former subsection. We list the values of argument differences in Table~\ref{table: d4-d1 2n 3y 5n} and Table~\ref{table: d2-d1 2n 3y 5n} below, where the details can also be found in \cite{QihangKLsumsGitHub}. 
    
	\begin{table}[!htbp]
		\centering 
		\begin{tabular}{|c|c|c|c|c|}
			\hline
			$c'\Mod{30}$ & 6 & 12 & 18 & 24 \\
			$\beta$ & 1 & 3 & 2 & 4\\
			$3\beta c'\Mod {10 }$ & 8 & 8 & 8 & 8 \\
			$-9\beta c'^2\Mod {10}$ & 6 & 2 & 8 & 4 \\
			\hline 	
			$\Arg_1(d_1\rightarrow d_4;1)$ & $\frac 1{10}$ & $-\frac 3{10}$  & $\frac 3{10}$ & $-\frac 1{10}$ \\
			[1ex]
			$(\Arg_2+\Arg_3)(d_1\rightarrow d_4;1): \frac{3\beta }5 $ & $\frac 35$ & $\frac 45$ & $\frac 15$ & $\frac 25$ \\
			[1ex]
			Total $\Arg(d_1\rightarrow d_4;1)$ & $-\frac 3{10}$ & $\frac 12$ & $\frac 12$ & $\frac 3{10}$ \\
			\hhline{|=|=|=|=|=|}
			$-18\beta c'^2\equiv 2c'\Mod {5}$ & 2 & 4 & 1 & 3 \\
			\hline 	
			$\Arg_1(d_1\rightarrow d_4;2)$ & $-\frac 1{10}$ & $\frac 3{10}$  & $-\frac 3{10}$ & $\frac 1{10}$ \\
			[1ex]
			$(\Arg_2+\Arg_3)(d_1\rightarrow d_4;2): \frac{3\beta }5 $ & $\frac 35$ & $\frac 45$ & $\frac 15$ & $\frac 25$ \\
			[1ex]
			Total $\Arg(d_1\rightarrow d_4;2)$ & $\frac 12$ & $\frac 1{10}$ & $-\frac 1{10}$ & $\frac 12$ \\
			\hline 
		\end{tabular}
		\vspace{0.5ex}
		\caption{Table for $\Arg(d_1\rightarrow d_4;\ell)$; $2|c$, $3| c$, $5\nmid c$.  }
		\label{table: d4-d1 2y 3y 5n}
	\end{table}
	
	\begin{table}[!htbp]
		\centering 
		\begin{tabular}{|c|c|c|c|c|}
			\hline
			$c'\Mod{30}$ & 6 & 12 & 18 & 24 \\
			$\beta$ & 1 & 3 & 2 & 4\\
			$-3\beta c'^2\equiv 2c'\Mod {5 }$ & 2 & 4 & 1 & 3 \\
			\hline 	
			$\Arg_1(d_1\rightarrow d_2;1)$ & $\frac 25$ & $\frac 45$  & $\frac 15$ & $\frac 35$ \\
			[1ex]
			$\Arg_2(d_1\rightarrow d_2;1)$ & $\frac 1{10}$ & $\frac 3{10}$ & $-\frac 3{10}$ & $-\frac 1{10}$ \\
			[1ex]
			$\Arg_3(d_1\rightarrow d_2;1)$ & $\frac 45$ & $\frac 25$ & $\frac 35$ & $\frac 15$ \\
			[1ex]
			Total $\Arg(d_1\rightarrow d_2;1)$ & $\frac 3{10}$  & $\frac 12$  & $\frac 12$  &  $-\frac 3{10}$ \\
			\hhline{|=|=|=|=|=|}
			$-12\beta c'^2\equiv 3c'\Mod {5}$ & 3 & 1 & 4 & 2 \\
			\hline
			$\Arg_1(d_1\rightarrow d_2;2)$ & $\frac1{10}$ & $-\frac 3{10}$  & $\frac 3{10}$ & $-\frac 1{10}$ \\
			[1ex]
			$\Arg_2(d_1\rightarrow d_2;2)$ & $\frac 1{10}$ & $\frac 3{10}$ & $-\frac 3{10}$ & $-\frac 1{10}$ \\
			[1ex]
			$\Arg_3(d_1\rightarrow d_2;2)$ & $\frac 45$ & $\frac 25$ & $\frac 35$ & $\frac 15$ \\
			[1ex]
			Total $\Arg(d_1\rightarrow d_2;2)$ & $0$  & $\frac 25$  & $-\frac 25$  &  $0$ \\
			\hline
		\end{tabular}
		\vspace{0.5ex}
		\caption{Table for $\Arg(d_1\rightarrow d_2;\ell)$; $2| c$, $3| c$, $5\nmid c$.  }
		\label{table: d2-d1 2y 3y 5n}
	\end{table}
	
	
	We have finished the proof of Proposition~\ref{Vrc sum equals 0} by proving that the four points $P(d)$ satisfy Condition~\ref{Styles four points mod 5} when $5\|c$. The next subsection is to prove Proposition~\ref{Vrc sum equals 0} in the case $25|c$, which is different from the former ones.
	
	\subsection{Case \texorpdfstring{$5|c'$}{c' is a multiple of 5}}
	
	We still denote $c'=c/5$ and $V(r,c)\defeq \{d\Mod c^*:\ d\equiv r\Mod {c'}\}$ for $r\Mod{c'}^*$. Now $|V(r,c)|=5$ and since $(d+c',c)=1$ when $(d,c)=1$, we can write $V(r,c)=\{d,d+c',d+2c',d+3c',d+4c'\}$ for $1\leq d<c'$ and $d\equiv r\Mod{c'}$. 
	
	We claim that Proposition~\ref{Vrc sum equals 0} is still true: 
	\begin{equation}{\label{Vrc sum equals 0, 5 divide c case}}
		\sum_{d\in V(r,c)} \frac{e\(-\frac{3c'a\ell^2}{10}\)}{\sin(\frac{\pi a\ell}{p})}e\(-\frac{12 cs(d,c)}{24c}\)e\(\frac{4d}c\)=0,
	\end{equation}
	but this time we have five summands. We prove \eqref{Vrc sum equals 0, 5 divide c case} by showing that there are only two possible configurations for the summands: 
	\begin{center}
		\includegraphics[scale=0.4]{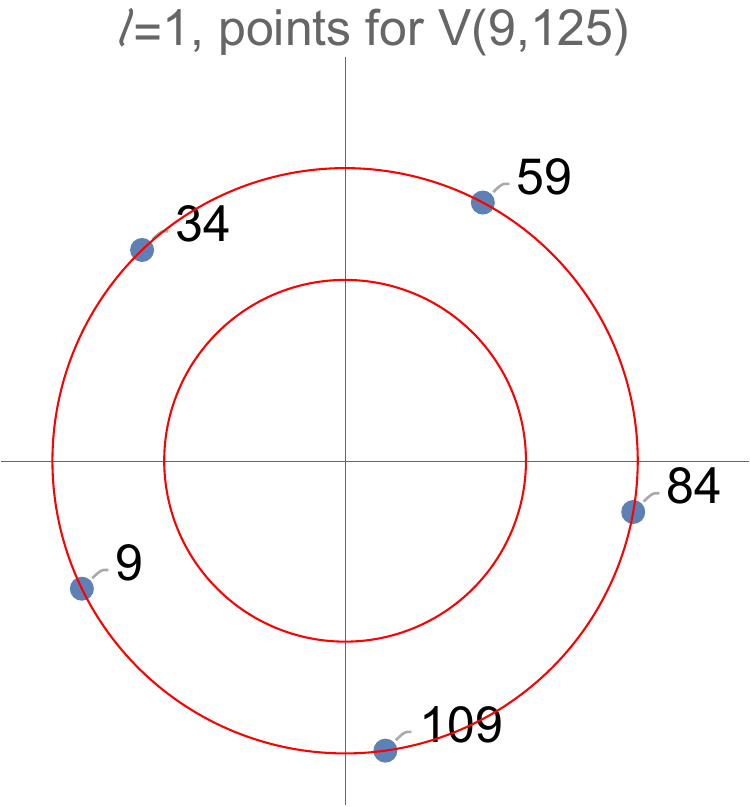}
		\includegraphics[scale=0.4]{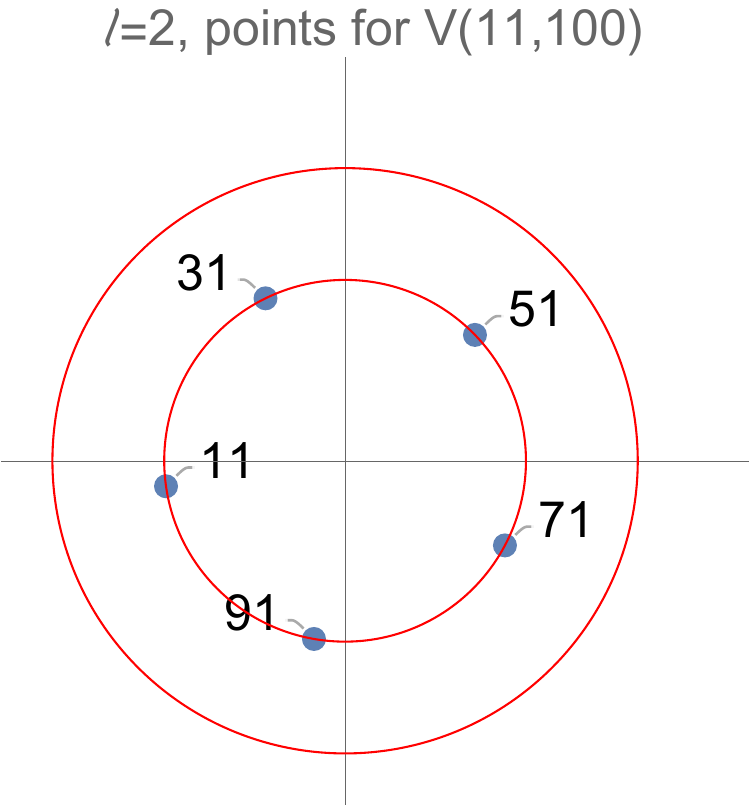}
	\end{center}
	i.e. all at the outer circle (radius $\csc(\frac{\pi }5)$) or all at the inner circle (radius $\csc(\frac{2\pi }5)$) and equally distributed. As in \eqref{P1P2P3}, we still denote the factors in \eqref{Vrc sum equals 0, 5 divide c case} by $P_1$, $P_2$ and $P_3$ and investigate the argument differences contributed from each term. 
	
	For any $d\in V(r,c)$, we take $a\Mod c$ such that $ad\equiv 1\Mod c$. We denote $d_*=d+c'$ and denote $a_*$ by $a_*d_*\equiv 1\Mod c$. Then we can pick $a_*=a-c'$ when $d\equiv 1,4\Mod 5$ and pick $a_*=a+c'$ when $d\equiv 2,3\Mod 5$. 
	
	Note that $P_1(d)=(-1)^{ca\ell}/{\sin(\frac{\pi a\ell}5)}$ has period $c'$, hence $\Arg_1(d\rightarrow d_*;\ell)=0$ always. In the following two cases, we prove 
	\begin{equation}\label{Arg diff d to d star, 5n+4}
		\Arg(d\rightarrow d_*;\ell)=-\tfrac 15\quad \text{for every }d\in V(r,c)
	\end{equation}
	when $\ell=1$. The other case $\ell=2$ only affects $P_1$ (radii for those five points) and results in the same conclusion. This proves \eqref{Vrc sum equals 0, 5 divide c case} when $25|c$. 
	
	\subsubsection{\texorpdfstring{$c$ is odd}{c is odd}}
	
	When $d\equiv 1,4\Mod 5$ and $3\nmid c$, \eqref{Congru Dedekind theta is or not 3}, \eqref{Congru Dedekind mod theta c} and \eqref{Congru Dedekind c odd} imply that
	\begin{equation}\label{12cs in case d1-d 3n 5y}
		12cs(d_*,c)-12cs(d,c)\equiv 0\Mod {24c},
	\end{equation}
	hence $\Arg_2(d\rightarrow d_*;\ell)=0$ always. As $\Arg_3(d\rightarrow d_*;\ell)=\frac 45$ for any $d\in V(r,c)$, we have proved \eqref{Vrc sum equals 0, 5 divide c case} in this case. 
	
	When $3|c$ and $d\equiv 1,4\Mod 5$, \eqref{Congru Dedekind mod theta c} implies
	\begin{equation}\label{12cs in case d1-d 3y 5y}
		-(12cs(d_*,c)-12cs(d,c))\equiv -c'(1-\overline{d_{*\{3c\}}}\cdot \overline{d_{\{3c\}}})\Mod{3c}.
	\end{equation} 
	Since $15|c$, after dividing by $c'$ we have
	\begin{equation}\label{12cs d1-d in case 15 30 45, mod 15, c odd}
		-(60s(d_*,c)-60s(d,c))\equiv a^2-1\Mod{15}. 
	\end{equation}
	Note that $a\equiv 1,4\Mod 5$ and $a^2\equiv 1\Mod{15}$, hence we have $-(12cs(d_*,c)-12cs(d,c))\equiv 0\Mod{24 c} $ and conclude \eqref{Arg diff d to d star, 5n+4}. 
	
	When $d\equiv 2,3\Mod 5$ and $3\nmid c$, recall $d_*=d+c'$ and $a_1=a+c'$ with $a+d\equiv 0\Mod 5$. By \eqref{Congru Dedekind theta is or not 3}, \eqref{Congru Dedekind mod theta c} and \eqref{Congru Dedekind c odd}, we have
	\[-(12cs(d_*,c)-12cs(d,c))\equiv -2c'\Mod {c}\quad \text{and}\quad \equiv 0\Mod{24}. \]
	Then $\Arg_2(d\rightarrow d_*;\ell)=\frac 25$. Since $\Arg_3(d\rightarrow d_*;\ell)=\frac 45$, we have proved \eqref{Arg diff d to d star, 5n+4} in this case. 
	
	When $d\equiv 2,3\Mod 5$ and $3|c$, we still get \eqref{12cs d1-d in case 15 30 45, mod 15, c odd}, while this time $a\equiv 3,2\Mod 5$,  $a^2-1\equiv 3\Mod{15}$, and hence $a^2-1\equiv 48\Mod {120}$. We have $\Arg_2(d\rightarrow d_*;\ell)=\frac 25$. Since $\Arg_3(d\rightarrow d_*;\ell)=\frac 45$, we have proved \eqref{Arg diff d to d star, 5n+4} in this case. 
	
	\subsubsection{\texorpdfstring{$c$ is even}{c is even}}
	In this case, denote $\lambda$ by $2^\lambda\|c$. Then by \eqref{Congru Dedekind c even} we have
	\begin{align*}
		12cs(d_*,c)-12cs(d,c)&\equiv c'\(1-(c^2+3c+1)\overline{d_{1\{8\times 2^\lambda\}}}\cdot \overline{d_{\{8\times 2^\lambda\}}}\)\\
		&+2c\((\tfrac{c}{d_*})\overline{d_{1\{8\times 2^\lambda\}}}-(\tfrac{c}{d})\overline{d_{\{8\times 2^\lambda\}}}\)\Mod{8\times 2^\lambda}. 
	\end{align*}
	Since $c'|(12cs(d_*,c)-12cs(d,c))$ by \eqref{12cs in case d1-d 3n 5y} and \eqref{12cs in case d1-d 3y 5y}, dividing the above congruence by $c'$ we have
	\begin{equation}\label{12cs in case d1-d 2y 3n 5y, mod 8 exact}
		-60(s(d_*,c)-s(d,c))\equiv -c'(d-1)(c'-1)-2\((\tfrac{c}{d_*})d_*-(\tfrac cd)d\)\Mod 8. 
	\end{equation}
	For the first term,
	\begin{equation}	\label{12cs in case d1-d 2y 3n 5y, mod 8, first part}
		-c'(d-1)(c'-1)\equiv \left\{
		\begin{array}{ll}
			0\Mod 8 &\ \text{if }2\|c,\ d\equiv 1\Mod 4;\\
			4\Mod 8 &\ \text{if }2\|c,\ d\equiv 3\Mod 4;\\
			0\Mod 8 &\ \text{if }4|c. \\
		\end{array}
		\right. 
	\end{equation}
	When $\lambda$ is even, $(\frac {2^\lambda }{d_*})=(\frac {2^\lambda }d)=1$; when $\lambda\geq 3$ is odd, $(\frac{2}{d_*})=(\frac{2}{d})$. In either case $\frac{d_*-1}2$ and ${\frac{d-1}2}$ have the same parity. Hence when $4|c$, we have
	\[(\tfrac{c}{d_*})d_*-(\tfrac cd)d\equiv 0\Mod 4. \]
	When $2\|c$, we have Table~\ref{table: d1-d 2y 3n 5y} for val$\defeq (\tfrac{c}{d_*})d_*-(\tfrac cd)d \Mod 4$ using quadratic reciprocity. 
	\begin{table}[!htbp]
		\centering
		\begin{tabular}{|c|c|c|c|c|}
			\hline
			$d\Mod 8$ & 1 & 3 & 5 & 7\\
			\hline
			$d_*\Mod 8$ when $c'\equiv 2\Mod 8$ & 3 & 5 & 7 & 1 \\
			val. & 0 & 2 & 0 & 2 \\
			\hline
			$d_*\Mod 8$ when $c'\equiv 6\Mod 8$ & 7 & 1 & 3 & 5 \\
			val. & 0 & 2 & 0 & 2 \\
			\hline
		\end{tabular}
		\vspace{0.5ex}
		\caption{Table for val$\defeq (\tfrac{c}{d_*})d_*-(\tfrac cd)d\Mod 4$; $2| c$, no requirement for $(3,c)$, $5|c$.  }
		\label{table: d1-d 2y 3n 5y}
	\end{table}
	
	Combining \eqref{12cs in case d1-d 2y 3n 5y, mod 8, first part} and Table~{\ref{table: d1-d 2y 3n 5y}}, for $2^\lambda\| c$ we get
	\begin{equation}
		12cs(d_*,c)-12cs(d,c)\equiv 0\Mod {8\times 2^\lambda}. 
	\end{equation}
	The argument for the cases $d\equiv 1,4\Mod 5$ or $d\equiv 2,3\Mod 5$, or the cases $3\nmid c$ or $3|c$, still works as the former case. 
	
	\begin{proof}[Proof of Proposition~\ref{Vrc sum equals 0}]
		This is proved by Condition~\ref{Styles four points mod 5} and \eqref{Arg diff d to d star, 5n+4}.
	\end{proof}
	
	This finishes the proof of (5-4) in Theorem~\ref{Kloosterman sums vanish}.

	\section{Proof of (7-5,1) of Theorem~\ref{Kloosterman sums vanish}}
	\label{Section 7-5,1}
	
	Recall \eqref{S infty infty for mod p} in the case $p=7$: 
	\begin{equation}\label{S infty infty for mod 7}
		e(\tfrac 18)S_{\infty\infty}^{(\ell)}(0,7n+5,c,\mu_7)=\sum_{\substack{d\Mod c^*\\ad\equiv 1\Mod c}} \frac{(-1)^{\ell c}e(-\frac{3c'a\ell^2}{14})}{\sin(\frac{\pi a\ell}7)}\,e^{-\pi is(d,c)}e\(\frac{(7n+5)d}c\).
	\end{equation} 
	We only need to consider $\ell=1,2,3$ because $A(\frac {\ell}p;n)=A(1-\frac {\ell}p;n)$. 
	
	As in the previous section, we define $c'\defeq c/7$. For an integer $r$ with $(r,c')=1$, we define
	\[V(r,c)=\{d\Mod c^*:\ d\equiv r\Mod {c'}\}. \]
	For example, $V(1,42)=\{1,13,19,25,31,37\}$ and $V(4,35)=\{4,9,19,24,29,34\}$. 
	Then $|V(r,c)|=6$ if $7\nmid c'$, $|V(r,c)|=7$ if $49|c$, and $(\Z/c\Z)^*$ is the disjoint union 
	\[(\Z/c\Z)^*=\bigcup_{r\Mod{c'}^*}V(r,c). \]

	We claim the following proposition. 
	\begin{proposition}{\label{Vrc sum equals 0, mod 7}}
		For $\ell=1,2,3$, when $7|c$, $\frac c7\cdot \ell\not\equiv 1\Mod 7$ and $\frac c7\cdot \ell\not\equiv -1\Mod 7$, the sum on $d\in V(r,c)$ for all $r\Mod {c'}^*$ is zero: 
		\begin{equation}\label{Vrc sum equals 0, mod 7 equation}
			s_{r,c}\defeq \sum_{d\in V(r,c)} \frac{e\(-\frac{3c'a\ell^2}{14}\)}{\sin(\frac{\pi a\ell}{7})}e\(-\frac{12 cs(d,c)}{24c}\)e\(\frac{5d}c\)=0
		\end{equation}
	\end{proposition}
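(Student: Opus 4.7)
The plan is to mirror the strategy used for the $p=5$ case in Section~\ref{Section (5-4)}. Granted Proposition~\ref{Vrc sum equals 0, mod 7}, a Fourier expansion in $r\Mod{c'}^*$ gives
\begin{equation*}
e(\tfrac 18)S_{\infty\infty}^{(\ell)}(0,7n+5,c,\mu_7)=(-1)^{\ell c}\sum_{r\Mod{c'}^*}s_{r,c}\,e\(\frac{nr}{c'}\)=0
\end{equation*}
for every $n\geq 0$, which is exactly (7-5,1). Thus it suffices to prove Proposition~\ref{Vrc sum equals 0, mod 7}, and I would split this into the two structural subcases $7\|c$ and $49\mid c$.

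For $7\|c$, we have $|V(r,c)|=6$. Fix $\beta\in\{1,\ldots,6\}$ with $\beta c'\equiv 1\Mod 7$ and parametrize
\[
V(r,c)=\{d_1,\ldots,d_6\},\qquad d_j\equiv j\Mod 7,\qquad d_{j+1}=d_1+j\beta c',
\]
and choose $a_j$ analogously so that $a_{\overline{j_{\{7\}}}}d_j\equiv 1\Mod c$. Decompose each summand as $P(d)=P_1(d)P_2(d)P_3(d)$ with $P_1(d)=e(-\tfrac{3c'a\ell^2}{14})/\sin(\tfrac{\pi a\ell}7)$, $P_2(d)=\exp(-\pi i s(d,c))$, and $P_3(d)=e(\tfrac{5d}c)$. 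Since $\gcd(\ell,7)=1$, as $j$ ranges over $\{1,\ldots,6\}$ the values $|\sin(\tfrac{\pi a_j\ell}7)|$ attain each of $\sin(\tfrac{\pi k}7)$, $k=1,2,3$, exactly twice, so the six points $P(d_j)$ lie on three concentric circles of radii $\csc(\tfrac{\pi k}7)$, two points per circle.

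Next I would formulate an analog of Condition~\ref{Styles four points mod 5} enumerating those angular configurations in which the six points sum to zero; these arise from antipodal placement on each circle as well as from sine-cosecant identities at the seventh roots of unity that balance the three pair-sums. I would then establish a reduction lemma in the spirit of Lemma~\ref{p=5 Argreduction}, using the congruences \eqref{Congru Dedekind mod theta c}--\eqref{Congru Dedekind c even} together with the Chinese Remainder Theorem, to prove that enough values $\Arg_j(d_u\to d_v;\ell)$ coincide under the parametrization above that the verification reduces to a small number of base differences, say $\Arg(d_1\to d_2;\ell)$ and $\Arg(d_1\to d_6;\ell)$ for each $\ell\in\{1,2,3\}$. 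These are then tabulated against $c'\Mod{42}$ in direct parallel with Tables~\ref{table: d4-d1 2n 3n 5n}--\ref{table: d2-d1 2y 3y 5n}. When $49\mid c$, $|V(r,c)|=7$ and the argument parallels \S3.5: writing $V(r,c)=\{d,d+c',\ldots,d+6c'\}$, one shows that $P_1$ is constant on $V(r,c)$ so all seven points share the same modulus, and that $\Arg(d\to d+c';\ell)$ is a fixed nonzero multiple of $\tfrac 17$, making the seven points the vertices of a regular heptagon and hence summing to zero.

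The main obstacle I expect is the combinatorial bookkeeping in the $7\|c$ case: there are three circles rather than two, six points rather than four, and the relevant modulus jumps from $30$ to $42$ with many more reduced residues, so the number of Dedekind-sum congruence subcases ($2\mid c$ versus $2\nmid c$, $3\mid c$ versus $3\nmid c$, crossed with residues of $c'\Mod 7$) is several times larger. A secondary difficulty is verifying that the hypothesis $c'\ell\not\equiv\pm 1\Mod 7$ is exactly the condition keeping the configuration inside the admissible list of Condition: in the excluded cases the two points on the outermost circle (radius $\csc(\tfrac{\pi}7)$) fail to cancel, and the resulting discrepancy is precisely what the $S_{0\infty}$ correction in (7-5,2) offsets.
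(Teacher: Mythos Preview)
Your plan matches the paper's approach closely: split into $7\|c$ versus $49\mid c$, decompose each summand as $P_1P_2P_3$, establish a fixed list of admissible angular configurations for the six (resp.\ seven) points, and verify via Dedekind-sum congruences that every $V(r,c)$ lands in one of them. Two points need correction.

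First, for $7\|c$ the reduction lemma you propose is too aggressive. The symmetry one actually obtains (analogous to Lemma~\ref{p=5 Argreduction}) is $\Arg(d_1\to d_2;\ell)=\Arg(d_5\to d_6;\ell)$ and $\Arg(d_2\to d_3;\ell)=\Arg(d_4\to d_5;\ell)$, which leaves \emph{three} independent base differences --- the paper checks $\Arg(d_1\to d_2;\ell)$, $\Arg(d_2\to d_3;\ell)$, and $\Arg(d_3\to d_4;\ell)$ --- not two. Knowing only $\Arg(d_1\to d_2;\ell)$ and $\Arg(d_1\to d_6;\ell)$ together with the symmetry and the cycle constraint does not pin down $\Arg(d_2\to d_3;\ell)$ and $\Arg(d_3\to d_4;\ell)$ separately.

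Second, the vanishing configurations for six points are not ``antipodal placement on each circle''; in the paper's Condition~\ref{Styles six points mod 7} none of the three pairs are diametrically opposed. The cancellation comes instead from the single identity
\[
\frac{\cos(\tfrac{3\pi}7)}{\sin(\tfrac{\pi}7)}-\frac{\cos(\tfrac{\pi}7)}{\sin(\tfrac{2\pi}7)}+\frac{\cos(\tfrac{2\pi}7)}{\sin(\tfrac{3\pi}7)}=0,
\]
which forces a specific non-antipodal pattern of argument increments (e.g.\ $-\tfrac{5}{14},-\tfrac{2}{7},-\tfrac{1}{7},-\tfrac{2}{7},-\tfrac{5}{14},\tfrac{3}{7}$ for $\ell=1$, $c'\equiv 2,4\Mod 7$). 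Your description of the $49\mid c$ case is accurate: since $7\mid c'$, the residue $d\Mod 7$ is constant on $V(r,c)$, so $\Arg(d\to d+c';\ell)$ is indeed a fixed nonzero multiple of $\tfrac17$ and the seven points are equidistributed on a single circle.
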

	
	If Proposition~\ref{Vrc sum equals 0, mod 7} is true, then 
	\[S_{\infty\infty}^{(\ell)}(0,7n+5,c,\mu_7)=e(-\tfrac 18)(-1)^{\ell c}\sum_{r\Mod c'}s_{r,c}e\(\frac{nr}{c'}\)=0\]
	for all $n\in \Z$, $\ell=1,2,3$ and we have proved (7-5,1) of Theorem~\ref{Kloosterman sums vanish}. 
	
	As in \eqref{P1P2P3}, we label the terms in \eqref{Vrc sum equals 0, mod 7 equation} as
	\begin{equation}\label{P1P2P3 7n+5}
		P(d)\defeq \frac{e\(-\frac{3c'a\ell^2}{14}\)}{\sin(\frac{\pi a\ell}{7})}\cdot  e\(-\frac{12 cs(d,c)}{24c}\)\cdot e\(\frac{5d}c\) =:P_1(d)\cdot P_2(d)\cdot P_3(d).
	\end{equation}
	We first deal with the case $7\nmid c'$. We denote the argument differences as in \eqref{Arg def}, but in this case $u,v\in\{1,2,\cdots,6\}$ and $\ell\in\{1,2,3\}$, where 
	\begin{equation}\label{ajinverse7 dj =1}
		d_u\equiv a_u\equiv u\Mod 7,\ a_{\overline{u_{\{7\}}}}d_u\equiv 1\Mod c, \  d_{u+1}=d_u+\beta c'\text{ and }a_{u+1}=a_u+\beta c'.
	\end{equation}
	Note that $a_ud_u$ may not be $1\Mod c$. Let $1\leq \beta \leq 6$ such that $\beta c'\equiv 1 \Mod 7 $. 
	
	As in Condition~\ref{Styles four points mod 5}, we have the following styles for the six summands followed by the explanation in Condition~\ref{Styles six points mod 7}: 
	
	\textbullet\ $\ell=1,2,3$, first style. 
	\begin{center}
		
		\includegraphics[scale=0.35]{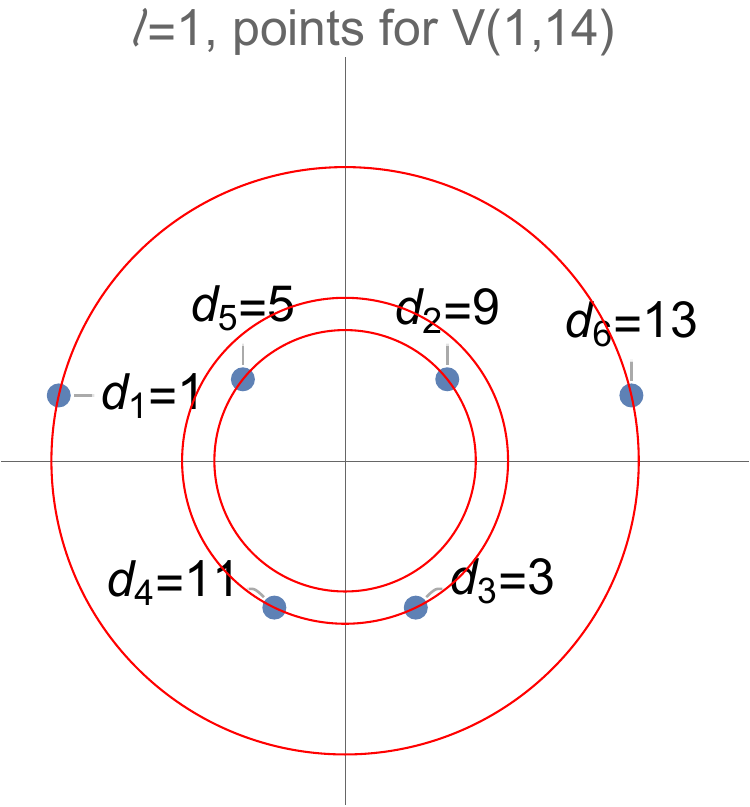}
		\includegraphics[scale=0.35]{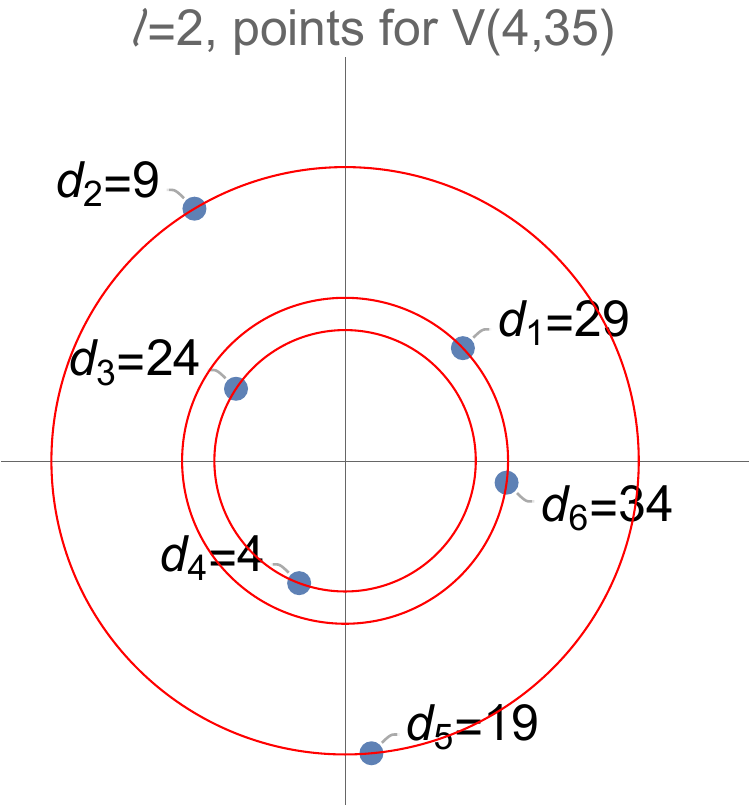}
		\includegraphics[scale=0.35]{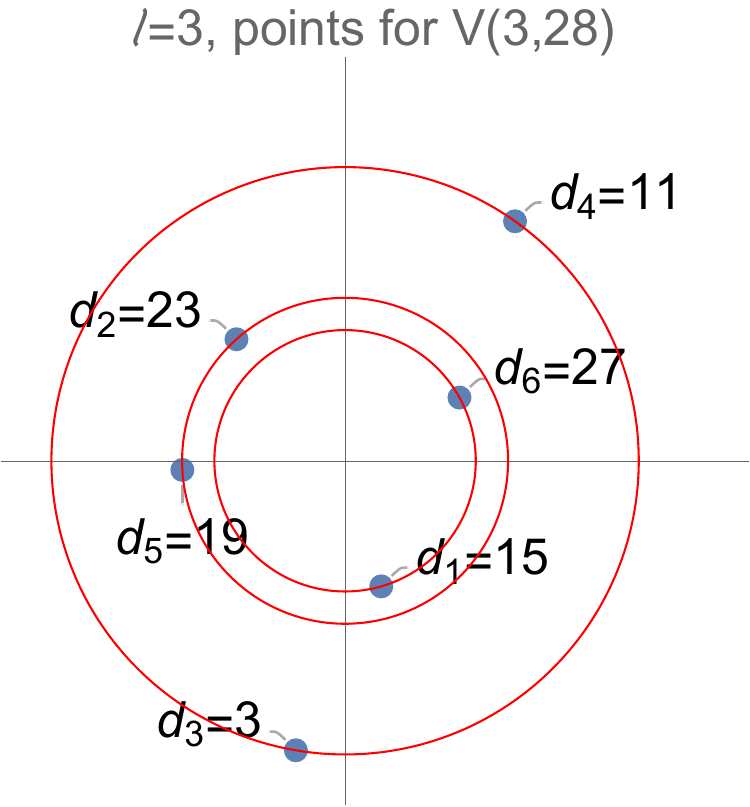}
		
	\end{center}
	
	\textbullet\ $\ell=1,2,3$, reversed style from the above.  
	\begin{center}
		\includegraphics[scale=0.35]{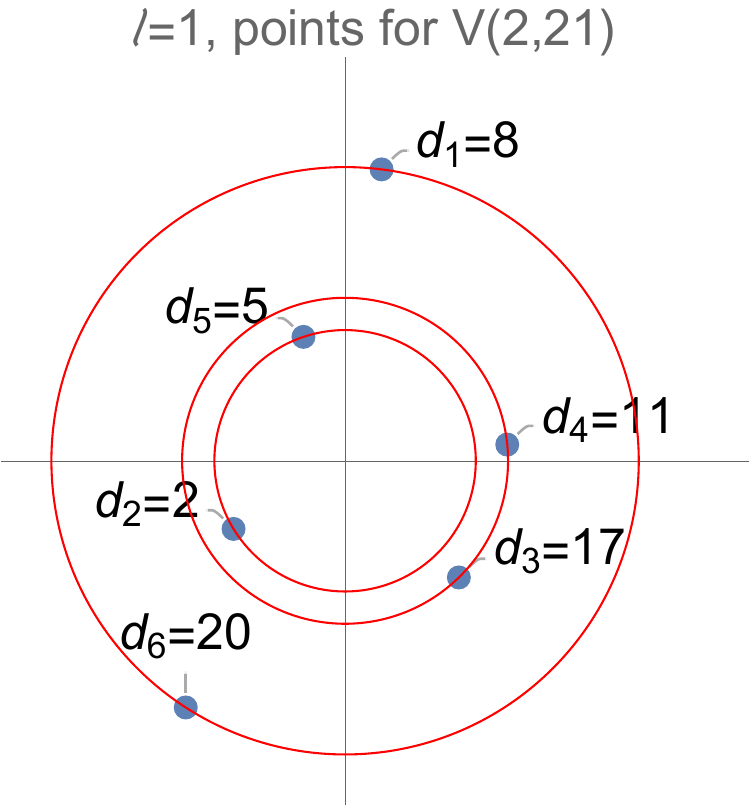}
		\includegraphics[scale=0.35]{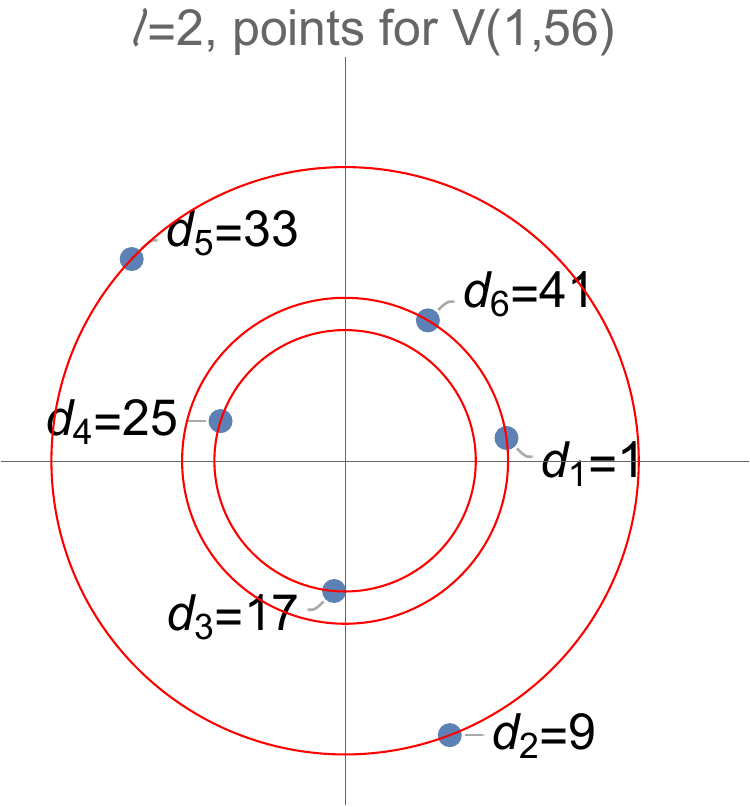}
		\includegraphics[scale=0.35]{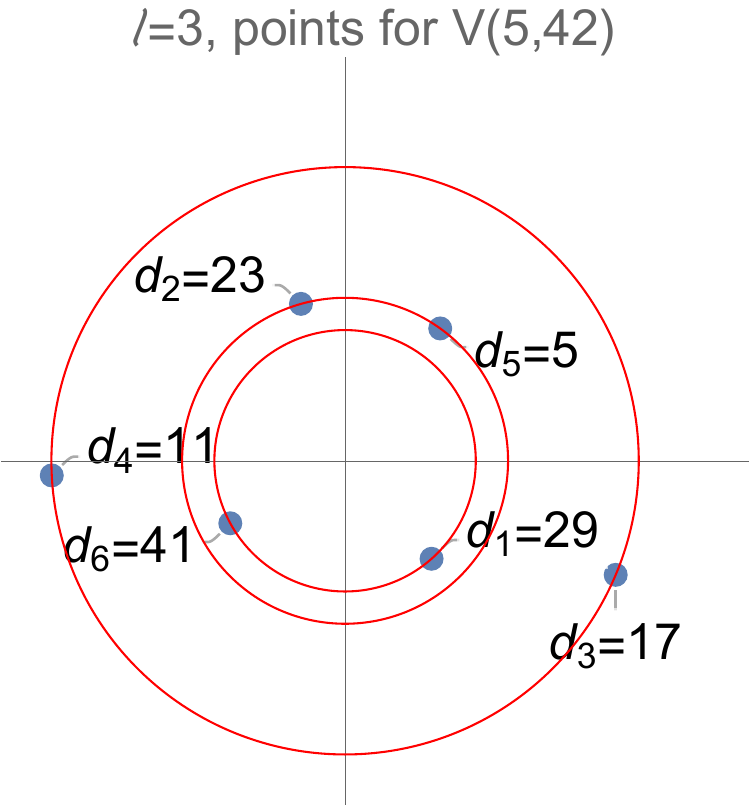}
	\end{center}
	
	Here we explain these styles. Each graph above includes three circles centered at the origin with radii $\csc(\frac \pi 7)$, $\csc(\frac{2\pi }7)$ and $\csc(\frac{3\pi}7)$, respectively. 
	The six points in each graph above mark $P(d)$ for $d\in V(r,c)$ on these three circles. It is not hard to prove that whenever the six points satisfy the following condition on their argument differences, they sum to zero. This proves Proposition~\ref{Vrc sum equals 0, mod 7} by using the equation
	\[\frac{\cos(\frac{3\pi}7)}{\sin(\frac{\pi}7)}-\frac{\cos(\frac{\pi}7)}{\sin(\frac{2\pi}7)}+\frac{\cos(\frac{2\pi}7)}{\sin(\frac{3\pi}7)}=0,\quad \text{where }\frac 1{\sin(\frac{\pi}7)},\ \frac 1{\sin(\frac{2\pi}7)},\ \frac 1{\sin(\frac{3\pi}7)}\text{ are the radii.}\]
	\begin{condition}\label{Styles six points mod 7}
		We have the following six styles for these six points when $7\|c$, $\frac c7\cdot \ell\not \equiv 1\Mod 7$ and $\frac c7\cdot\ell\not \equiv -1\Mod 7$. 
		\begin{itemize}
			\item $\ell=1$: the arguments (as a proportion of $2\pi$) going $d_1\rightarrow d_2\rightarrow d_3\rightarrow d_4\rightarrow d_5\rightarrow d_6\rightarrow d_1$ are $-\frac5{14}$, $-\frac2{7}$, $-\frac1{7}$, $-\frac2{7}$, $-\frac5{14}$, and $\frac3{7}$, respectively, or the reversed style. 
			\begin{center}
				\begin{tabular}{|c|ccccccccccccc|}
					\hline
					& $d_1$ & $\rightarrow$ & $d_2$ & $\rightarrow$ & $d_3$ & $\rightarrow$ & $d_4$ & $\rightarrow$ & $d_5$ & $\rightarrow$ & $d_6$ & $\rightarrow$ & $d_1$ \\
					\hline
					$c'\equiv 2,4\Mod 7$  & & $-\frac 5{14}$ & & $-\frac 2{7}$ & & $-\frac 1{7}$ & & $-\frac 2{7}$ & & $-\frac 5{14}$ & & $\frac 3{7}$& \ \\
					\hline
					$c'\equiv 3,5\Mod 7$  & & $\frac 5{14}$ & & $\frac 2{7}$ & & $\frac 1{7}$ & & $\frac 2{7}$ & & $\frac 5{14}$ & & $-\frac 3{7}$& \ \\ 
					\hline
				\end{tabular}
			\end{center}
			
			\item $\ell=2$, second graph style:  \begin{center}
				\begin{tabular}{|c|ccccccccccccc|}
					\hline
					& $d_1$ & $\rightarrow$ & $d_2$ & $\rightarrow$ & $d_3$ & $\rightarrow$ & $d_4$ & $\rightarrow$ & $d_5$ & $\rightarrow$ & $d_6$ & $\rightarrow$ & $d_1$ \\
					\hline
					$c'\equiv 5,6\Mod 7$ & & $\frac 3{14}$ & & $\frac 1{14}$ & & $\frac 2{7}$ & & $\frac 1{14}$ & & $\frac 3{14}$ & & $\frac 1{7}$& \ \\
					\hline
					$c'\equiv 1,2\Mod 7$ & & $-\frac 3{14}$ & & $-\frac 1{14}$ & & $-\frac 2{7}$ & & $-\frac 1{14}$ & & $-\frac 3{14}$ & & $-\frac 1{7}$& \ 
					\\
					\hline
				\end{tabular}
			\end{center}
			
			\item $\ell=3$, third graph style:
			\begin{center}
				\begin{tabular}{|c|ccccccccccccc|}
					\hline
					& $d_1$ & $\rightarrow$ & $d_2$ & $\rightarrow$ & $d_3$ & $\rightarrow$ & $d_4$ & $\rightarrow$ & $d_5$ & $\rightarrow$ & $d_6$ & $\rightarrow$ & $d_1$ \\
					\hline
					$c'\equiv 1,4\Mod 7$  & & $-\frac 3{7}$ & & $\frac 5{14}$ & & $\frac 3{7}$ & & $\frac 5{14}$ & & $-\frac 3{7}$ & & $-\frac 2{7}$& \ \\
					\hline
					$c'\equiv 3,6\Mod 7$ & & $\frac 3{7}$ & & $-\frac 5{14}$ & & $-\frac 3{7}$ & & $-\frac 5{14}$ & & $\frac 3{7}$ & & $\frac 2{7}$& \ \\
					\hline
				\end{tabular}
			\end{center}
		\end{itemize}

	\end{condition}

	\begin{proof}[Proof of Proposition~\ref{Vrc sum equals 0, mod 7} when $7\nmid c'$]
		This is proved by Condition~\ref{Styles six points mod 7}. 	
	\end{proof}
	
%
	
    To verify Condition~\ref{Styles six points mod 7}, it suffices to show $\Arg(d_1\rightarrow d_2;\ell)$, $\Arg(d_2\rightarrow d_3;\ell)$, and $\Arg(d_3\rightarrow d_4;\ell)$ for all the cases $c'\Mod {42}$. This is because
	\[\Arg(d_1\rightarrow d_2;\ell)=\Arg(d_5\rightarrow d_6;\ell)\text{ and } \Arg(d_2\rightarrow d_3;\ell)=\Arg(d_4\rightarrow d_5;\ell),\]
    where the proof is similar as the proof of Lemma~\ref{p=5 Argreduction}. To prove that $\Arg(d_1\rightarrow d_2;\ell)$, $\Arg(d_2\rightarrow d_3;\ell)$, and $\Arg(d_3\rightarrow d_4;\ell)$ all satisfy Condition~\ref{Styles six points mod 7}, the approach is similar as in Section~\ref{Section (5-4)}: we divide the cases based on whether $c'$ is divisible by $2,3,7$, respectively. The detailed arguments are omitted here, but interested readers can refer to \cite{QihangKLsumsGitHub} for in-depth steps. Specifically, we provide the result for the case $7|c'$, as this will be useful in the proof of Section~\ref{Section part ii}.

	\subsection{Case \texorpdfstring{$7|c'$}{c' divisible by 7}}
	\label{Subsection 7n+5: 49|c}
	We still denote $c'=c/7$ and $V(r,c)=\{d\Mod c^*:\ d\equiv r\Mod {c'}\}$ for $r\Mod{c'}^*$. Now $|V(r,c)|=7$. Since $(d+c',c)=1$ when $(d,c)=1$, we can write $V(r,c)=\{d,d+c',d+2c',\cdots, d+6c'\}$ for $1\leq d<c'$ and $d\equiv r\Mod{c'}$. 
	
	We claim that Proposition~\ref{Vrc sum equals 0, mod 7} is still true: 
	\begin{equation}{\label{Vrc sum equals 0, 7 divide c case}}
		s_{r,c}\defeq \sum_{d\in V(r,c)} \frac{e\(-\frac{3c'a\ell^2}{14}\)}{\sin(\frac{\pi a\ell}{7})}e\(-\frac{12 cs(d,c)}{24c}\)e\(\frac{5d}c\)=0,
	\end{equation}
	while this time we have seven summands. We prove \eqref{Vrc sum equals 0, 7 divide c case} by showing that there are only three cases for the sum: all at the outer circle (radius $1/\sin(\frac{\pi }7)$), all at the middle circle (radius $1/\sin(\frac{2\pi }7)$), and all at the inner circle (radius $1/\sin(\frac{3\pi }7)$). Moreover, the seven points are equally distributed. Similar as before, we still denote $P_1$, $P_2$ and $P_3$ for each term in \eqref{Vrc sum equals 0, 7 divide c case} and investigate the argument differences contributed from each term.

	For any $d\in V(r,c)$ and $a\Mod c$ such that $ad\equiv 1\Mod c$, we define $d_*=d+c'$ and $a_*$ by $a_*d_*\equiv 1\Mod c$. Specifically, we take $a_*=a-c'$, $a-2c'$, $a+3c'$, $a+3c'$, $a-2c'$, $a-c'$, when $d\equiv 1,2,3,4,5,6\Mod 7$, respectively. 
	Note that $P_1(d)=(-1)^{ca\ell}/{\sin(\frac{\pi a\ell}7)}$ has period $c'$.  Hence we always have
	\[\Arg_1(d\rightarrow d_*;\ell)=0 \quad \text{and}\quad \Arg_3(d\rightarrow d_*;\ell)=\tfrac 57. \]

	For the result, we have
	\begin{equation}\label{Arg diff 7n+5 case 49|c}
		\Arg(d\rightarrow d_*;\ell)=\left\{\begin{array}{ll}
			-\frac 27 &\ d\equiv 1,6\Mod 7;\vspace{1ex}\\
			\frac 37 &\ d\equiv 2,5\Mod 7;\vspace{1ex}\\ 
			-\frac 17 &\ d\equiv 3,4\Mod 7. 
		\end{array}
		\right.
	\end{equation} 
	when $\ell=1$. 
	In the other cases $\ell=2,3$, only $P_1$ is affected and we still get \eqref{Arg diff 7n+5 case 49|c}. 
	\begin{proof}[Proof of Proposition~\ref{Vrc sum equals 0, mod 7} when $49|c$]
		It is clear that \eqref{Arg diff 7n+5 case 49|c} implies \eqref{Vrc sum equals 0, 7 divide c case}.
	\end{proof} 
	One may visualize \eqref{Arg diff 7n+5 case 49|c} in the following graphs: 
	\begin{center}
		\includegraphics[scale=0.35]{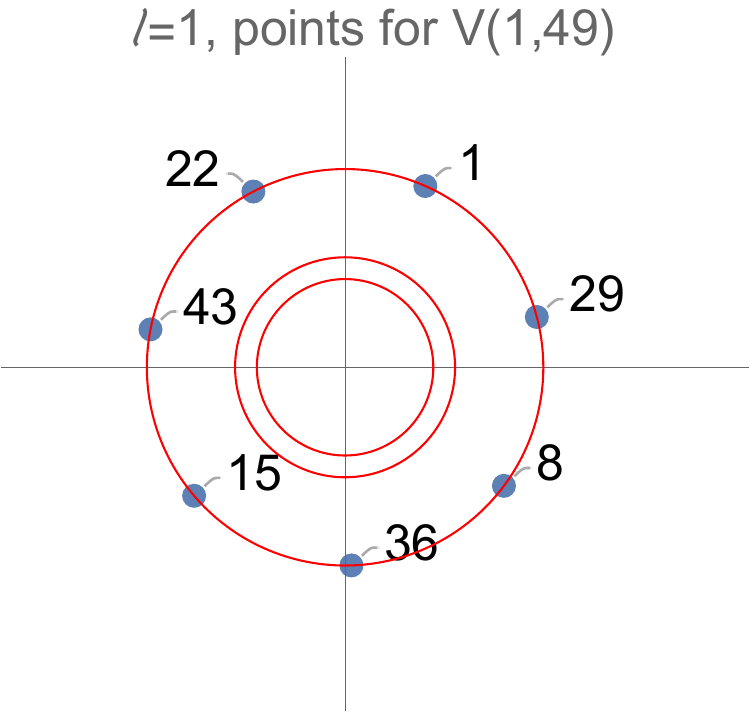}
		\includegraphics[scale=0.35]{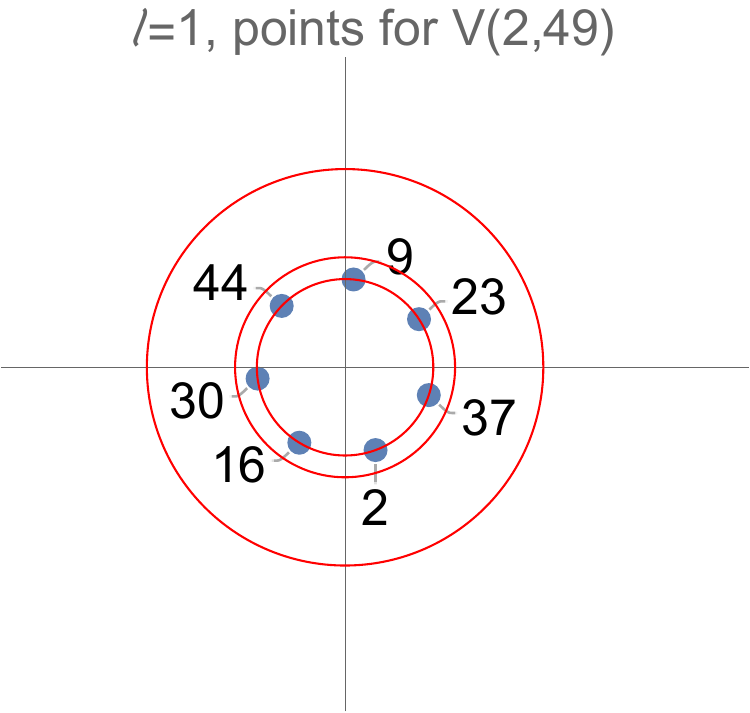}
		\includegraphics[scale=0.35]{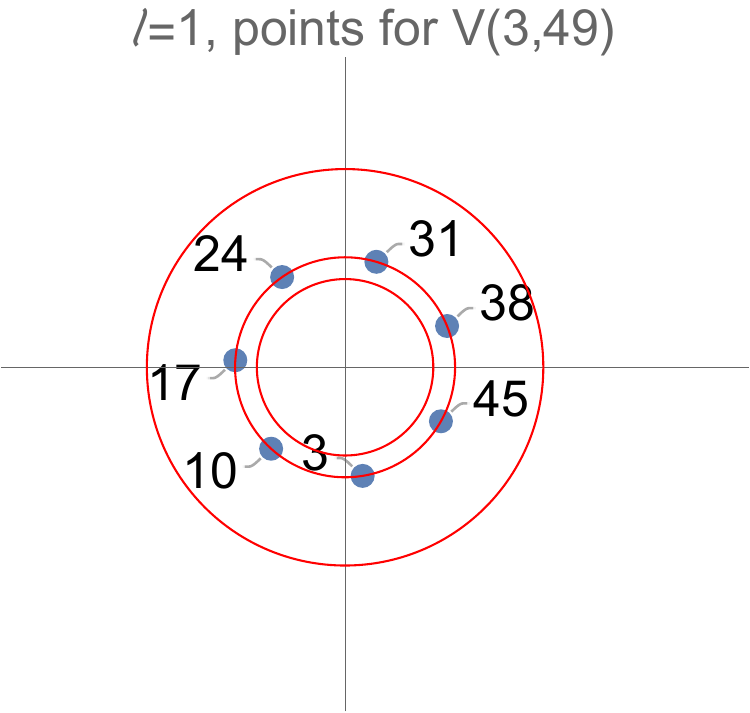}
	\end{center}

	We have proved Proposition~\ref{Vrc sum equals 0, mod 7}, which implies (7-5,1) of Theorem~\ref{Kloosterman sums vanish}.

	\section{Proof of (7-5,2) of Theorem~\ref{Kloosterman sums vanish}}
	\label{Section 7-5,2}
	
	For all $1\leq \ell\leq 6$, $n\geq 0$, $7|c$ and $7\nmid A$, if $A\ell\equiv \pm1 \Mod 7$ and $c=7A$, (7-5,2) becomes
	\begin{equation}\label{Appendix two KL sums cancel eq to prove}
		e(\tfrac 18)S_{\infty\infty}^{(\ell)}(0,7n+5,c,\mu_7)+2i{\sqrt 7}S_{0\infty}^{(\ell)}(0,7n+5,A,\mu_7;0)=0. 
	\end{equation} 
	We still denote $c'=c/7=A$ and $V(r,c)\defeq \{d\Mod c^*:\ d\equiv r\Mod c\}$ for $(r,c')=1$. 
	
	Recall \eqref{S infty infty for mod p} for $p=7$. By $\ell c\equiv \ell A\Mod 2$ we have
	\begin{equation}\label{explain S infty infty mod 7}
		e(\tfrac 18)S_{\infty\infty}^{(\ell)}(0,7n+5,c,\mu_7)=\sum_{d\Mod c^*}\frac{(-1)^{\ell A}\exp\(-\frac{3\pi ic'a\ell^2}{7}\)}{\sin(\frac{\pi a\ell}{7})}e^{-\pi i s(d,c)}e\(\frac{(7n+5)d}c\). 
	\end{equation}
	Recall that $[A\ell]$ is defined as the least non-negative residue of $A\ell\Mod 7$. By \eqref{S 0 infty mod 7, +1}, when $[A\ell]=1$, we denote $T$ by $A\ell=7T+1$ and
	\begin{align}\label{explain S 0 infty mod 7 +1}
		\begin{split}
			&2i{\sqrt 7}S_{0\infty}^{(\ell)}(0,7n+5,A,\mu_7;0)\\
			&=2i\sqrt 7(-1)^{A\ell-[A\ell]}\sum_{\substack{B\Mod A^*\\0<C<7A,\,7|C\\BC\equiv -1(A)}}e\(\frac{(\frac 32 T^2+\frac12 T)C}A\)e^{-\pi i s(B,A)}e\(\frac{(7n+5)B}A\). 
		\end{split}
	\end{align}
	By \eqref{S 0 infty mod 7, -1}, when $[A\ell]=6$, we denote $T$ by $A\ell=7T-1$ and have
	\begin{align}\label{explain S 0 infty mod 7 -1}
		\begin{split}
			&2i{\sqrt 7}S_{0\infty}^{(\ell)}(0,7n+5,A,\mu_7;0)\\
			&=2i\sqrt 7(-1)^{A\ell-[A\ell]}\!\!\!\!\sum_{\substack{B\Mod A^*\\0<C<7A,\,7|C\\BC\equiv -1(A)}}\!\!\!\! e\(\frac{(\frac 32 (T-1)^2+\frac52 (T-1)+1)C}A\)e^{-\pi i s(B,A)}e\(\frac{(7n+5)B}A\).    
		\end{split}
	\end{align}

	For $(r,c')=1$ and any $d\in V(r,c)$, we define $P(d)$ as 
	\begin{equation}\label{P d1}
		P(d)\defeq  \frac{(-1)^{[A\ell]}e\(-\frac{3c'a\ell^2}{14}\)}{\sin(\frac{\pi a\ell}{7})}e^{-\pi i s(d,c)}e\(\frac{(7n+5)d}c\)=: P_1(d)\cdot P_2(d)\cdot P_3(d). 
	\end{equation}
	When $A\ell=7T+1$, we denote $Q_1(B)= i$, $Q_3(B)=e(\frac{(7n+5)B}A)$, 
	\begin{equation}\label{Q B}
		Q_2(B)\defeq e\(\frac{(\frac 32 T^2+\frac 12 T)C}A\)e^{-\pi i s(B,A)}\quad  \text{and}\quad Q(B)=:2\sqrt 7\cdot Q_1(B)Q_2(B)Q_3(B); 
	\end{equation}
	when $A\ell=7T-1$, we only change the definition of $Q_2(B)$ to
	\begin{equation}\label{Q B -1}
		Q_2(B)\defeq e\(\frac{(\frac32 (T-1)^2+\frac52 (T-1)+1)C}A\)e^{-\pi i s(B,A)} 
	\end{equation}
	and still denote $Q(B)=2\sqrt 7 \cdot Q_1(B)Q_2(B)Q_3(B)$.

	We divide the cases according to $c'\ell\equiv \pm 1\Mod 7$, $\ell$, and the divisibility of $A$ by $2,3$.  For each $r\Mod A^*$,  recall that $d_1\in V(r,c)$ is the unique $d_1\Mod c^*$ such that $d_1\equiv 1\Mod 7$. 
	We compare the argument difference from $Q(B)$ to $P(d_1)$, where we choose 
	\begin{equation}\label{choose B and C, 7n+5}
		B=\left\{\begin{array}{ll}
			-d_1T, &\ A\ell=7T+1,\\
			d_1T, &\ A\ell=7T-1,	
		\end{array}
		\right. \quad \text{and}\quad 
		C=-7\overline{d_{1\{A\}}}. 
	\end{equation}
	We define $\Arg(Q_j\rightarrow P_j;\ell)$ in the following way: suppose $P_j(d_1)=Re^{i\Theta}$ and $Q_j(B)=R_Be^{i\Theta_B}$, then 
	\[\Arg(Q_j\rightarrow P_j;\ell)=\alpha \quad \text{if and only if}\quad \Theta-\Theta_B=\alpha\cdot 2\pi +2k\pi \text{ for }k\in \Z. \]
	We also denote $\Arg(Q\rightarrow P;\ell)=\sum_{j=1}^3 \Arg(Q_j\rightarrow P_j;\ell)$. Note that if $\Arg(Q_j\rightarrow P_j;\ell)=\alpha$, then $\Arg(Q_j\rightarrow P_j;\ell)=\alpha+k$ for all $k\in \Z$. 
	
	With the notation above, we claim that the argument differences satisfy the following proposition. 
	\begin{proposition}\label{claim 7T+1 Arg diff proposition}
		For $c=7c'=7A$, any $r\Mod{c'}^*$, $d_1\in V(r,c)$ and $B$ chosen by \eqref{choose B and C, 7n+5}, we have 
		\begin{align}
			\label{claim 7T+1 Arg diff}
			A\ell=7T+1: \quad &\Arg(Q\rightarrow P;\ell)=-\frac 37, -\frac 5{14}, \frac 3{14} \quad \text{for }\ell=1,2,3,\ \text{respectively;}\\
			\label{claim 7T-1 Arg diff}
			A\ell=7T-1:\quad & 
			\Arg(Q\rightarrow P;\ell)=\frac 37, \frac 5{14}, -\frac 3{14} \quad \text{for }\ell=1,2,3,\ \text{respectively}. 
		\end{align}
	\end{proposition}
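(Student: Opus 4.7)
The plan is to compute each of the three argument differences $\Arg(Q_j \to P_j;\ell)$ for $j=1,2,3$ separately and then sum them, using crucially that $ad_1 \equiv 1 \Mod{7A}$ forces $a \equiv 1 \Mod 7$, together with the explicit choices $B = \mp d_1 T$ and $C = -7\overline{d_{1\{A\}}}$ from \eqref{choose B and C, 7n+5}.

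First I would dispatch the easy $j=3$ piece: a direct calculation gives $P_3(d_1)/Q_3(B) = e((7n+5)(d_1 - 7B)/(7A))$, and substituting $B = \mp d_1 T$ yields $d_1 - 7B = \pm d_1 A\ell$, so that $\Arg(Q_3 \to P_3;\ell) \equiv \pm 5\ell/7 \Mod 1$ since $d_1 \equiv 1 \Mod 7$. Next I would handle the $j=1$ piece by determining $a \Mod{14}$: since $a$ is only defined modulo $c = 7A$, its parity can be adjusted freely when $A$ is odd, and is otherwise pinned down by $d_1$. Combining the sign of $\sin(\pi a\ell/7)$, the phase $e(-3c'a\ell^2/14)$, the discrete sign $(-1)^{[A\ell]}$, and the factor $1/i = e(-1/4)$ coming from $Q_1$, I obtain an explicit contribution depending only on $A \Mod{14}$, on $\ell$, and on whether $A\ell = 7T+1$ or $7T-1$.

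The main obstacle is the $j=2$ contribution, where I must compare $s(d_1, 7A)$ with $s(B,A)$ while accounting for the correction $e(mC/A)$, where $m$ is the integer in \eqref{explain S 0 infty mod 7 +1} or \eqref{explain S 0 infty mod 7 -1}. My strategy is to use the Dedekind sum congruences \eqref{Congru Dedekind theta is or not 3}--\eqref{Congru Dedekind c even} (together with the auxiliary \eqref{Congruece inverse difference}) to pin down $12 \cdot 7A \cdot s(d_1, 7A) \Mod{24\cdot 7A}$ and $12 A \cdot s(B, A) \Mod{24 A}$ as explicit expressions in $d_1$ and $\overline{d_{1\{A\}}}$. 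The specific choice $C = -7\overline{d_{1\{A\}}}$ is engineered so that $e(mC/A)$ exactly cancels the principal residue modulo $A$ arising from the $d+\overline{d}$ term in \eqref{Congru Dedekind mod theta c} applied to $s(d_1, 7A)$; this is essentially a Dedekind sum reciprocity statement at level $7A$ relative to $A$. What remains is the residue modulo $24$, which I would split into subcases according to $\gcd(A,2)$ and $\gcd(A,3)$, mirroring Section~\ref{Section (5-4)}. The hardest bookkeeping occurs in the even-$A$ subcase, where the Kronecker symbol $(\tfrac{c}{d_1})$ appearing in \eqref{Congru Dedekind c even} must be simplified via quadratic reciprocity \eqref{quadratic reciprocity} and matched against the analogous symbol coming from $s(B,A)$.

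Once the three pieces are assembled modulo $1$, verifying the claimed constants $-\tfrac37, -\tfrac5{14}, \tfrac3{14}$ (respectively $\tfrac37, \tfrac5{14}, -\tfrac3{14}$) for $\ell=1,2,3$ reduces to a finite case check across congruence classes of $A$ modulo $42$, restricted to those with $A\ell \equiv \pm 1 \Mod 7$. I would organize this as tables analogous to those in Section~\ref{Section (5-4)}, with one table per pair $(\gcd(A,2), \gcd(A,3))$, and confirm by direct inspection that the total argument difference is independent of $d_1$ within each residue class of $A$ and equals the stated value, thereby establishing \eqref{claim 7T+1 Arg diff} and \eqref{claim 7T-1 Arg diff}.
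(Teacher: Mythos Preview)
Your proposal is correct and follows essentially the same approach as the paper: it computes $\Arg(Q_j\to P_j;\ell)$ for $j=1,2,3$ separately, handles $j=3$ by the direct reduction to $\pm 5\ell/7$, treats $j=2$ via the Dedekind sum congruences \eqref{Congru Dedekind theta is or not 3}--\eqref{Congru Dedekind c even} split according to $\gcd(A,2)$ and $\gcd(A,3)$ with the choice $C=-7\overline{d_{1\{A\}}}$ cancelling the mod-$A$ residue, and resolves $j=1$ by determining $a\Mod{14}$ together with the sign of the sine. The only cosmetic difference is that the paper works out the subcases in prose (\S\ref{Case 1 Mod 7, 2 nmid A, 3 nmid A}--\S\ref{Case 1 Mod 7, A even, 3 divides A}) rather than in tables indexed by $A\Mod{42}$, and records the $A\ell\equiv -1\Mod 7$ case in a single summary table rather than repeating the argument.
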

	
	To visualize the argument differences, here are a few examples: 
	\begin{center}
		\includegraphics[scale=0.4]{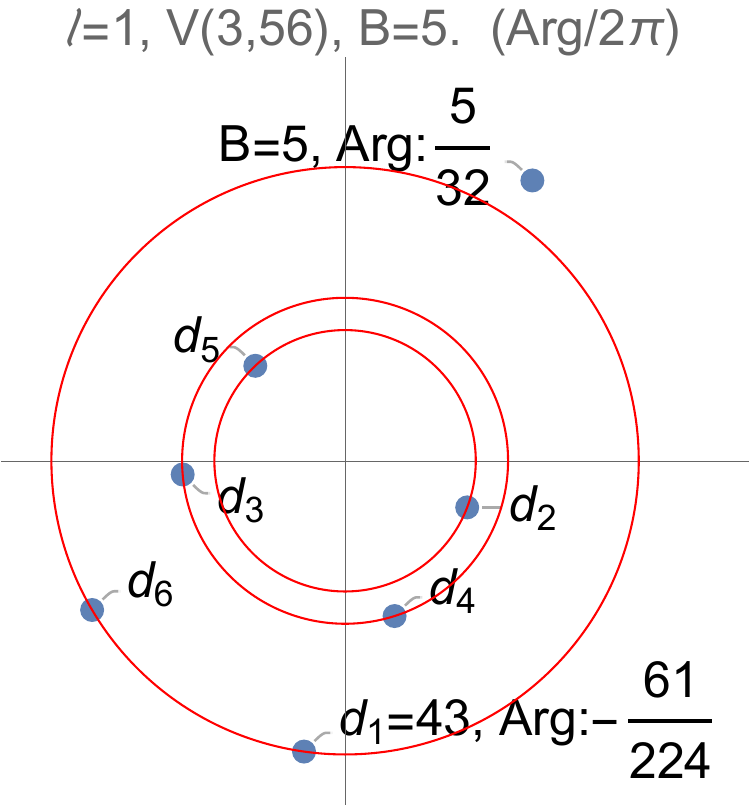}
		\includegraphics[scale=0.4]{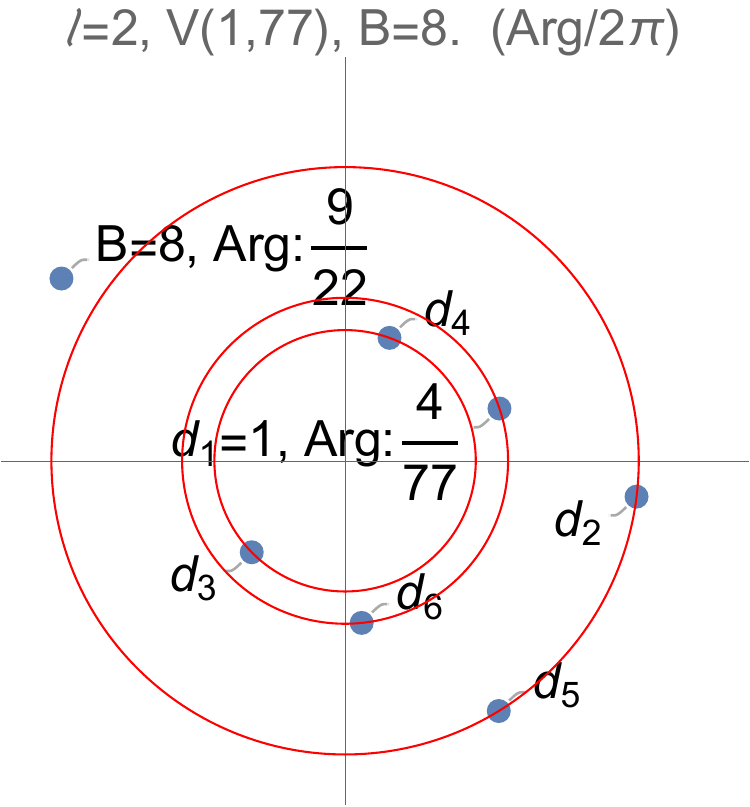}
		\includegraphics[scale=0.4]{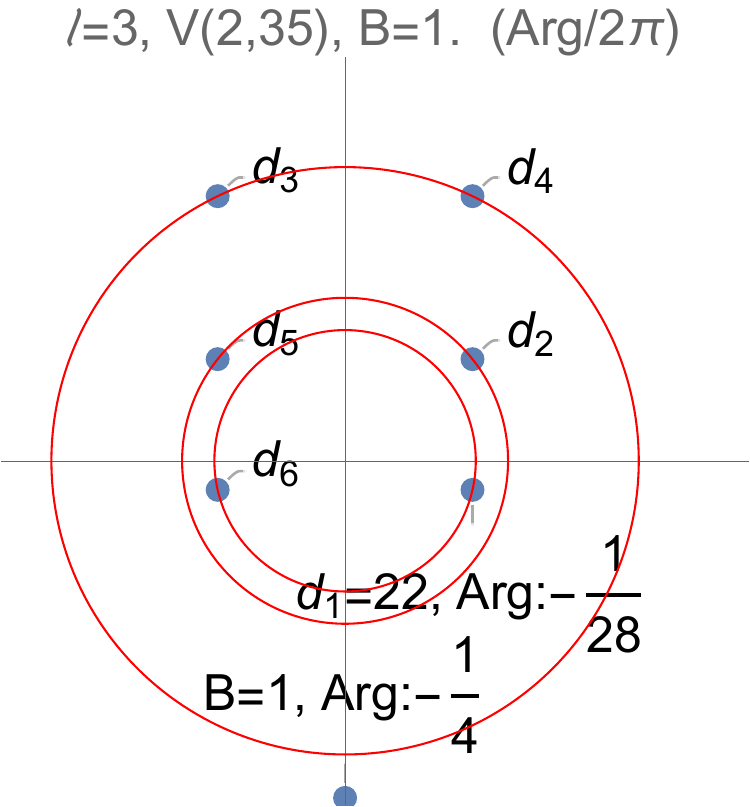}
		\vspace{15pt}
		
		\includegraphics[scale=0.4]{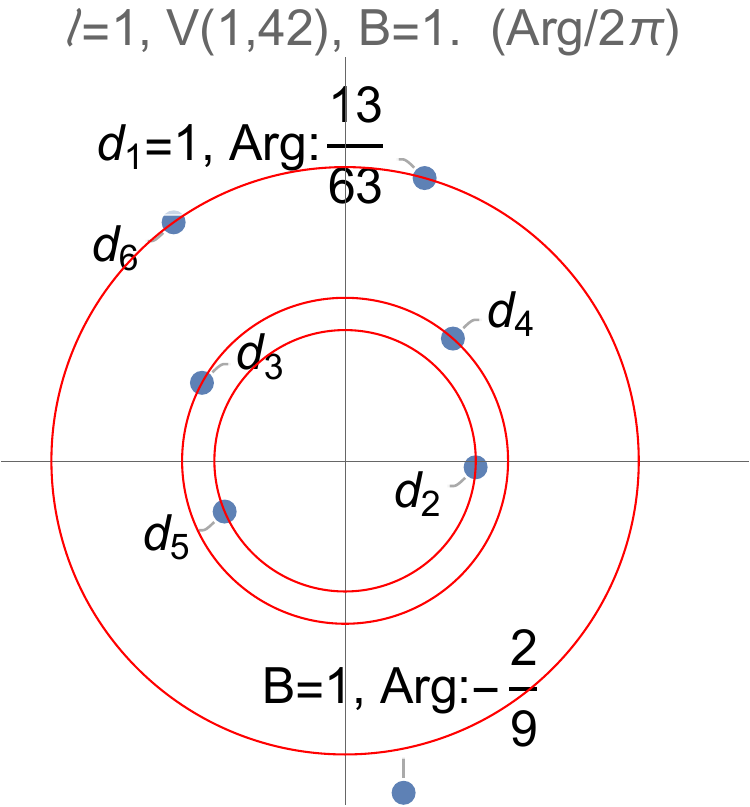}
		\includegraphics[scale=0.4]{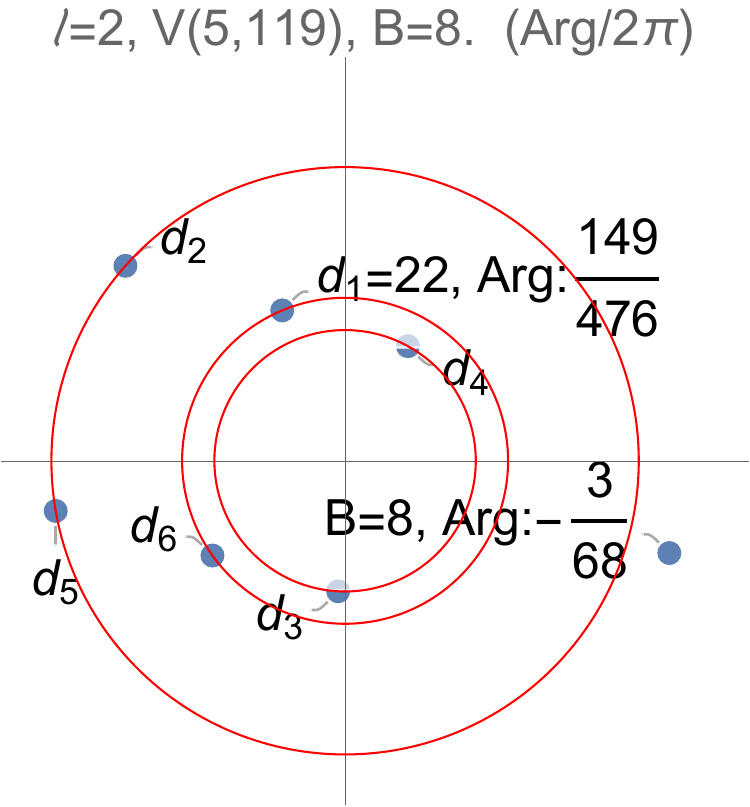}
		\includegraphics[scale=0.4]{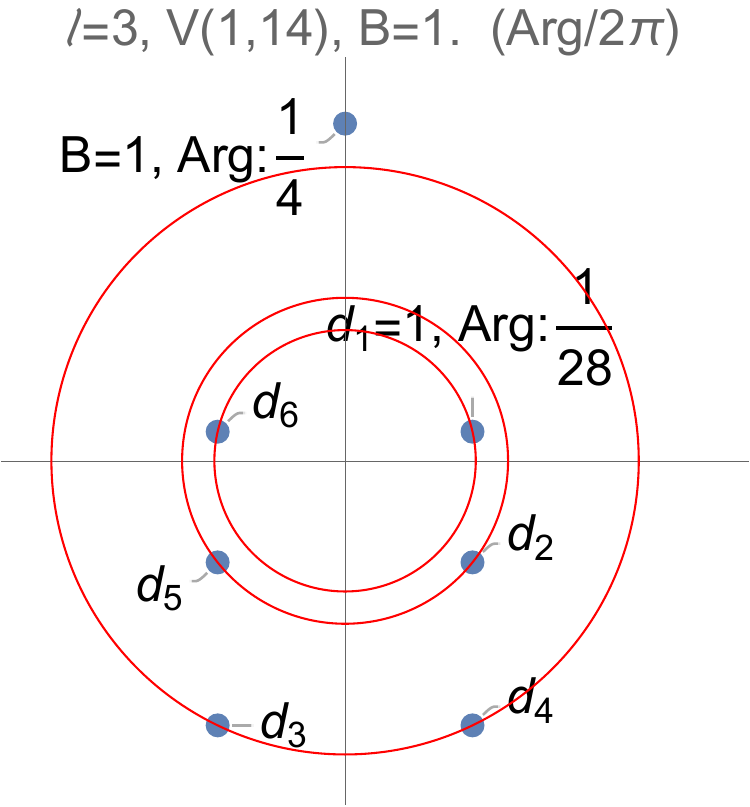}
	\end{center} 
	The red circles in the figures are centered at the origin with radii $\csc(\frac{\pi}7)$, $\csc(\frac{2\pi}7)$, and $\csc(\frac{3\pi}7)$, respectively, from the outside to the inside. The point labeled by $B$ represents $\frac{Q(B)}2$.

	For the styles of the six points $P(d_j)$ for $d_j\in V(r,c)$, we have the following condition. 
	
	\begin{condition}\label{Styles six points mod 7, 7Tpm1 case}
		When $c'\ell\equiv \pm 1\Mod 7$, we have the following six styles for these six points $P(d)$ for $d\in V(r,c)$. 
		\begin{itemize}
			\item $\ell=1$. When $c'\ell\equiv 1\Mod 7$, the arguments going $d_1\rightarrow d_2\rightarrow d_3\rightarrow d_4\rightarrow d_5\rightarrow d_6\rightarrow d_1$ are  $\frac 3{14}$, $-\frac 3{7}$, $\frac 2{7}$, $-\frac 3{7}$, $\frac 3{14}$, $\frac 1{7}$, respectively. When $c'\ell\equiv -1\Mod 7$, the direction is reversed, as shown in the second line. 
			\begin{center}
				\begin{tabular}{|c|ccccccccccccc|}
					\hline
					& $d_1$ & $\rightarrow$ & $d_2$ & $\rightarrow$ & $d_3$ & $\rightarrow$ & $d_4$ & $\rightarrow$ & $d_5$ & $\rightarrow$ & $d_6$ & $\rightarrow$ & $d_1$ \\
					\hline
					$c'\equiv 1\Mod 7$  & & $\frac 3{14}$ & & $-\frac 3{7}$ & & $\frac 2{7}$ & & $-\frac 3{7}$ & & $\frac 3{14}$ & & $\frac 1{7}$& \ \\
					\hline
					$c'\equiv 6\Mod 7$  & & $-\frac 3{14}$ & & $\frac 3{7}$ & & $-\frac 2{7}$ & & $\frac 3{7}$ & & $-\frac 3{14}$ & & $-\frac 1{7}$& \ \\ 
					\hline
				\end{tabular}
			\end{center}
			
			\item $\ell=2$. The first line is for $c'\ell\equiv 1\Mod 7$ and the second line is for $c'\ell\equiv -1\Mod 7$. 
			\begin{center}
				\begin{tabular}{|c|ccccccccccccc|}
					\hline
					& $d_1$ & $\rightarrow$ & $d_2$ & $\rightarrow$ & $d_3$ & $\rightarrow$ & $d_4$ & $\rightarrow$ & $d_5$ & $\rightarrow$ & $d_6$ & $\rightarrow$ & $d_1$ \\
					\hline
					$c'\equiv 4\Mod 7$ & & $-\frac 1{14}$ & & $-\frac 5{14}$ & & $-\frac 3{7}$ & & $-\frac 5{14}$ & & $-\frac 1{14}$ & & $\frac 2{7}$& \ \\
					\hline
					$c'\equiv 3\Mod 7$ &  & $\frac 1{14}$ & & $\frac 5{14}$ & & $\frac 3{7}$ & & $\frac 5{14}$ & & $\frac 1{14}$ & & $-\frac 2{7}$& \ 
					\\
					\hline
				\end{tabular}
			\end{center}
			
			\item $\ell=3$. The first line is for $c'\ell\equiv 1\Mod 7$ and the second line is for $c'\ell\equiv -1\Mod 7$. 
			\begin{center}
				\begin{tabular}{|c|ccccccccccccc|}
					\hline
					& $d_1$ & $\rightarrow$ & $d_2$ & $\rightarrow$ & $d_3$ & $\rightarrow$ & $d_4$ & $\rightarrow$ & $d_5$ & $\rightarrow$ & $d_6$ & $\rightarrow$ & $d_1$ \\
					\hline
					$c'\equiv 5\Mod 7$  & & $\frac 1{7}$ & & $\frac 3{14}$ & & $-\frac 1{7}$ & & $\frac 3{14}$ & & $\frac 1{7}$ & & $\frac 3{7}$& \ \\
					\hline
					$c'\equiv 2\Mod 7$ & & $-\frac 1{7}$ & & $-\frac 3{14}$ & & $\frac 1{7}$ & & $-\frac 3{14}$ & & $-\frac 1{7}$ & & $-\frac 3{7}$& \ \\
					\hline
				\end{tabular}
			\end{center}
		\end{itemize}
		
	\end{condition}
	
	If the six points $P(d)$ for $d\in V(r,c)$ satisfy Condition~\ref{Styles six points mod 7, 7Tpm1 case}, and $\Arg(Q\rightarrow P;\ell)$ satisfies \eqref{claim 7T+1 Arg diff} and \eqref{claim 7T-1 Arg diff} in the corresponding cases, then we have
	\begin{equation}\label{sum Pd and Qb to 0, mod 7}
		s_{r,c}\defeq \sum_{d\in V(r,c)}P(d)+Q(B)=0. 
	\end{equation}
	Note that $B$ is chosen from $d_1\in V(r,c)$ and $A$, hence from $r$ and $c$. 
	One way to prove \eqref{sum Pd and Qb to 0, mod 7} is by using the identity
	\[\frac{\cos(\frac {\pi} 7)}{\sin(\frac{\pi}7)}+\frac{\cos(\frac {2\pi} 7)}{\sin(\frac{2\pi}7)}-\frac{\cos(\frac {3\pi} 7)}{\sin(\frac{3\pi}7)}=\sqrt 7,\ \  \text{ where }\frac 1{\sin(\frac{j\pi}7)}\text{ for }j=1,2,3\text{ are the radii.}\]
	
	\begin{proof}[Proof of (7-5,2) of Theorem~\ref{Kloosterman sums vanish}]
		This is implied by \eqref{Appendix two KL sums cancel eq to prove}, which is proved by \eqref{sum Pd and Qb to 0, mod 7}, \eqref{explain S infty infty mod 7}, \eqref{explain S 0 infty mod 7 +1}, \eqref{explain S 0 infty mod 7 -1}, $7B\equiv r\Mod A$, and
		\begin{align*}
			e(\tfrac 18)&S_{\infty\infty}^{(\ell)}(0,7n+5,c,\mu_7)+2i{\sqrt 7}S_{0\infty}^{(\ell)}(0,7n+5,A,\mu_7;0)\\
			&=(-1)^{A\ell-[A\ell]}\sum_{r\Mod A^*} s_{r,c} \ e\(\frac{nr}A\)=0. 
		\end{align*}
	\end{proof}

	Subsections \S\ref{Case 1 Mod 7, 2 nmid A, 3 nmid A}-\S\ref{Case 1 Mod 7, A even, 3 divides A} are devoted to prove \eqref{claim 7T+1 Arg diff}, i.e. the cases $A\ell=c'\ell \equiv 1\Mod 7$. We will not repeat the proof for \eqref{claim 7T-1 Arg diff} but just list a few key intermediate steps at the end of this section. 
	
	\subsection{Case \texorpdfstring{$c'\ell\equiv 1\Mod 7, \ 2\nmid A,$ and $3\nmid A$}{1 Mod 7, 2 nmid A, 3 nmid A}}\label{Case 1 Mod 7, 2 nmid A, 3 nmid A}
	Recall $d_1\equiv 1\Mod7$ and $d_1\equiv r\Mod {c'}$. 
	Recall that we define $1\leq \beta\leq 6$ as $\beta c'\equiv 1\Mod 7$ and here $\beta =\ell$. Note that $d_1-\beta A\equiv 7B\Mod{7A}$: 
	\[7B=d_1(1-A\ell)\equiv d_1+(7-d_1)\ell A\Mod{7A},\text{ so }7B\equiv \left\{\begin{array}{l}
		0\Mod 7,\\
		r\Mod A. 
	\end{array}\right. \]
	On the other hand, $d_1-\beta c'\equiv r\Mod A$ and $d_1-\beta c' \equiv 0\Mod 7$. 
	The argument difference between $P_3$ and $Q_3$ is easy to compute:
	\begin{equation}\label{Al1mod7 Q3 to P3}
		7\Arg(Q_3\rightarrow P_3;\ell)\equiv 5d_1\ell\equiv 5\ell\Mod 7
	\end{equation}
	which does not depend on $n$. 
	
	Recall $\overline{d_{1\{7A\}}}\equiv a_1\Mod{7A}$ and $\overline{B_{\{A\}}}\equiv 7\overline{d_{1\{A\}}}\Mod A$. We have
	\begin{align*}
		-84A(s(d_1,7A)-s(B,A))&\equiv -d_1-a_1+d_1(1-\beta A)+49\overline{d_{1\{A\}}}\\
		&\equiv -d_1\beta A-a_1+49\overline{d_{1\{A\}}}\Mod{7A}. 
	\end{align*}
	Hence 
	\begin{equation}\label{Al1mod7 84A mod 7 mod A}
		-84A(s(d_1,7A)-s(B,A))\equiv \left\{\begin{array}{l}
			-2\Mod{7}\\
			48\overline{ d_{1\{A\}}}\Mod A
		\end{array}
		\right.
	\end{equation}
	We also have
	\begin{align*}
		-84A(s(d_1,7A)-s(B,A))&\equiv -7A-1+2(\tfrac{d_1}{7A})+7(A+1)-14(\tfrac{B}{A})\\
		&\equiv 6+2(\tfrac{d_1}A)+2(\tfrac{d_1}A)(\tfrac{7}{A})\Mod 8,
	\end{align*}
	where the last step is because $(A,7)=1$, $(\frac{d_1}7)=(\frac 17)=1$ and $7B\equiv d_1\Mod A$. By $A$ is odd and $A\ell\equiv 1\Mod T$, we have $(\frac 7A)=(\frac{\ell}7)(-1)^{\frac{A-1}2}$. Combining $6|12cs(d_1,c)$ and $6|12As(B,A)$ we conclude
	\begin{equation}\label{Al1mod7 84A mod 24}
		-84A(s(d_1,7A)-s(B,A))\equiv \left\{\begin{array}{lll}
			18\Mod {24},& \text{ if } &A\equiv 1\Mod 4\  (\ell=1,2)\\
			& \text{ or if } &A\equiv 3\Mod 4 \ (\ell=3)
			\\
			6\Mod{24},& \text{ if } &A\equiv 3\Mod 4 \ (\ell=1,2)\\
			&\text{ or if } &A\equiv 1\Mod 4 \ (\ell=3). 
		\end{array}\right. 
	\end{equation}
	
	Next we check the part of $Q_2$ other than $e^{-\pi i s(B,A)}$. Since $A$ is odd and $T$ is even, we have
	\begin{align*}
		\(\tfrac 32 T^2+\tfrac 12 T \)C&\equiv \tfrac T2(3T+1)(-7\overline{d_{1\{A\}}})\\
		&\equiv  \tfrac T2(3-3A\ell -7)\overline{d_{1\{A\}}}\\
		&\equiv   -2T\overline{d_{1\{A\}}}\Mod A.
	\end{align*}
	Then the part of $Q_2$ other than $e^{-\pi i s(d,c)}$ is
	\begin{equation}\label{Al1mod7 Q2 first part}
		e\(\frac{24\cdot 2\overline{d_{1\{A\}}}(-7T)}{24\cdot 7A}\)=e\(\frac{48\overline{d_{1\{A\}}}(1-A\ell)}{168A}\),\text{ with numerator }\equiv \left\{\begin{array}{l}
			0\Mod 7,\\
			48\overline{d_{1\{A\}}}\Mod A,\\
			0\Mod{24}. 
		\end{array}
		\right.  
	\end{equation}

	We conclude that
	\begin{equation}
		24\cdot 7A\Arg(Q_2\rightarrow P_2;\ell)\equiv R_2\Mod{168A}
	\end{equation}
	where $R_2$ is determined by \eqref{Al1mod7 84A mod 7 mod A}, \eqref{Al1mod7 84A mod 24} and \eqref{Al1mod7 Q2 first part}: $R_2\equiv 0\Mod A$, $R_2\equiv -2\Mod 7$, and $R_2\equiv 18,6\Mod{24}$ depending on the cases in \eqref{Al1mod7 84A mod 24}. Therefore, by $A\ell \equiv 1\Mod 7$ and $A\Mod 4$ in \eqref{Al1mod7 84A mod 24} we conclude
	\begin{equation}\label{Al1mod7 Q2 to P2}
		\Arg(Q_2\rightarrow P_2;\ell)=\frac{23,\ 11,\ 13}{28}\quad \text{for }\ell=1,2,3. 
	\end{equation}

	Then we compute $\Arg(Q_1\rightarrow P_1;\ell)$. When $\ell=1$, since $A$ is odd, $A\equiv 1\Mod {14}$. Note that both $a_1\equiv 1,8\Mod{14}$ give the same result due to the sign of $\sin(\frac{\pi a}7)$. It is direct to get
	\begin{equation}\label{Al1mod7 Q1 to P1 l=1}
		\Arg(Q_1\rightarrow P_1;1)=\frac12-\frac 3{14}-\frac 14=\frac{1}{28}. 
	\end{equation}
	When $\ell=2$, we get $A\equiv 4\Mod 7$ and 
	\begin{equation}\label{Al1mod7 Q1 to P1 l=2}
		\Arg(Q_1\rightarrow P_1;2)=\frac12-\frac 3{7}-\frac 14=-\frac{5}{28}. 
	\end{equation}
	When $\ell=3$, we have $A\equiv 5\Mod {14}$ and
	\begin{equation}\label{Al1mod7 Q1 to P1 l=3}
		\Arg(Q_1\rightarrow P_1;3)=\frac12-\frac {9}{14}-\frac 14=-\frac{11}{28}. 
	\end{equation}
	
	Combining \eqref{Al1mod7 Q1 to P1 l=1}, \eqref{Al1mod7 Q1 to P1 l=2}, \eqref{Al1mod7 Q1 to P1 l=3}, \eqref{Al1mod7 Q2 to P2}, and \eqref{Al1mod7 Q3 to P3} proves \eqref{claim 7T+1 Arg diff}. 
	
	\subsection{Case \texorpdfstring{$c'\ell\equiv 1\Mod 7, \ 2\nmid A,$ and $3|A$}{1 Mod 7, 2 nmid A, 3 divides A}}\label{Case 1 Mod 7, 2 nmid A, 3 divides A}
	In this case \eqref{Al1mod7 Q3 to P3} still holds. For $\Arg(Q_2\rightarrow P_2;\ell)$, by \eqref{Congru Dedekind mod theta c} we have
	\begin{align*}
		-84A(s(d_1,7A)-s(B,A))\equiv -d_1A\ell-\overline{d_{1\{21A\}}}+7\overline{(-d_1T)_{\{3A\}}}\Mod{21A}. 
	\end{align*}
	We have 
	\begin{align}\label{Al1mod7 84A mod 3A, A odd divisible by 3}
		\begin{split}
			-84A(s(d_1,7A)-s(B,A))&\equiv -d_1A\ell-\overline{d_{1\{3A\}}}+49\overline{(d_1-d_1A\ell)_{\{3A\}}}\\
			&\equiv -d_1A\ell +(48d_1+d_1A\ell) \overline{d_{1\{3A\}}}\overline{(d_1-d_1A\ell)_{\{3A\}}}\\
			&\equiv d_1A\ell\(\overline{d_{1\{3A\}}}\overline{(d_1-d_1A\ell)_{\{3A\}}}-1\)+48\overline{d_{1\{A\}}}\\
			&\equiv 48\overline{d_{1\{A\}}}\Mod{3A}
		\end{split}
	\end{align}
	where in the second congruence we use
	\[\overline{(x+y)_{m}}-49\overline{x_{\{m\}}}\equiv \overline{x_{\{m\}}}\overline{(x+y)_{\{m\}}}(-48x-49y)\Mod m \]
	for $(x+y,m)=(x,m)=1$ and in the last two congruences we use
	\begin{equation}\label{Congru m1 x inverse m1m2 is m1 x inverse m2 mod m1m2}
		m_1\overline{x_{\{m_1m_2\}}}\equiv m_1\overline{x_{\{m_2\}}}\Mod{m_1m_2}
	\end{equation}
	for $(x,m_1m_2)=1$. We still have
	\begin{equation}\label{Al1mod7 84A mod 7, A odd divisible by 3}
		-84A(s(d_1,7A)-s(B,A))\equiv -2\Mod 7. 
	\end{equation}
	Moreover, \eqref{Al1mod7 84A mod 24} and \eqref{Al1mod7 Q2 first part} still hold except the second congruence in \eqref{Al1mod7 Q2 first part} should be changed to $48\overline{d_{1\{A\}}}\Mod{3A} $. 
	
	We conclude
	\begin{equation}
		24\cdot 7A\Arg(Q_2\rightarrow P_2;\ell)\equiv R_2\Mod{168A}
	\end{equation}
	where $R_2$ is determined by \eqref{Al1mod7 84A mod 3A, A odd divisible by 3}, \eqref{Al1mod7 84A mod 7, A odd divisible by 3}, \eqref{Al1mod7 84A mod 24} and \eqref{Al1mod7 Q2 first part}: $R_2\equiv 0\Mod {3A}$, $R_2\equiv -2\Mod 7$, and $R_2\equiv 18,6\Mod{24}$ depending on the cases in \eqref{Al1mod7 84A mod 24}. Therefore, by $A\ell \equiv 1\Mod 7$ and \eqref{Al1mod7 84A mod 24} we conclude
	\begin{equation}\label{Al1mod7 Q2 to P2, A odd divisible by 3}
		\Arg(Q_2\rightarrow P_2;\ell)=\frac{23,\ 11,\ 13}{28}\quad \text{for }\ell=1,2,3. 
	\end{equation}
	
	The condition $3|A$ does not affect $\Arg(Q_1\rightarrow P_1;\ell)$ and $\Arg(Q_3\rightarrow P_3;\ell)$. Combining \eqref{Al1mod7 Q2 to P2, A odd divisible by 3} with \eqref{Al1mod7 Q1 to P1 l=1}, \eqref{Al1mod7 Q1 to P1 l=2}, \eqref{Al1mod7 Q1 to P1 l=3}, and \eqref{Al1mod7 Q3 to P3}, we have proved \eqref{claim 7T+1 Arg diff} in this case. 
	
	\subsection{Case \texorpdfstring{$c'\ell\equiv 1\Mod 7, \ 2| A,$ and $3\nmid A$}{1 Mod 7, A even, 3 nmid A}}\label{Case 1 Mod 7, A even, 3 nmid A}
	Recall \eqref{Al1mod7 Q3 to P3}. For $\Arg(Q_2\rightarrow P_2;\ell)$ we have \eqref{Al1mod7 84A mod 7 mod A} and need to use \eqref{Congru Dedekind c even}. Let $\lambda\geq 1$ be defined as $2^\lambda \| A$. Recall $B=-d_1T$ and $7T+1=A\ell$. We have
	\begin{align*}
		-84A&(s(d_1,7A)-s(B,A))\\
		\equiv & -d_1-\overline{ d_{1\{8\times 2^\lambda \}}}(49A^2+21A+1)-14\overline{ d_{1\{8\times 2^\lambda \}}}A(\tfrac{7A}{d_1})\\
		&+d_1(1-A\ell)+49\overline{ (d_1-d_1A\ell)_{\{8\times 2^\lambda \}}}(A^2+3A+1)+14\overline{ B_{\{8\times 2^\lambda \}}}A(\tfrac{A}{B})\\
		\equiv &-d_1A\ell +49A^2\cdot d_1A\ell \overline{ (d_1-d_1A\ell)_{\{8\times 2^\lambda \}}}\overline{ d_{1\{8\times 2^\lambda \}}}\\
		&+21A(6d_1+d_1A\ell)\overline{ (d_1-d_1A\ell)_{\{8\times 2^\lambda \}}}\overline{ d_{1\{8\times 2^\lambda \}}}\\
		&+(48d_1+d_1A\ell)\overline{ (d_1-d_1A\ell)_{\{8\times 2^\lambda \}}}\overline{ d_{1\{8\times 2^\lambda \}}}\\
		&+14A\(\overline{ B_{\{8\times 2^\lambda \}}}(\tfrac AB)-\overline{ d_{1\{8\times 2^\lambda \}}}(\tfrac{7A}{d_1})\)\Mod {8\times 2^\lambda }. 
	\end{align*}
	Since $2^\lambda\|A$ with $\lambda\geq 1$, we apply \eqref{Congru m1 x inverse m1m2 is m1 x inverse m2 mod m1m2} and $x^2\equiv 1\Mod 8$ for odd $x$ to get
	\begin{align*}
		-84A(s(d_1,7A)-s(B,A))\equiv & \  6d_1A+d_1A^2\ell(1+\ell)+48\overline{d_{1\{A\}}}\\
		&+6A\(B(\tfrac AB)-d_1(\tfrac{7A}{d_1})\)\Mod {8\times 2^\lambda }. 
	\end{align*}
	By \eqref{Al1mod7 84A mod 7, A odd divisible by 3}, 
	To determine $B(\tfrac AB)-d_1(\tfrac{7A}{d_1})\Mod 4$, we use the quadratic reciprocity \eqref{quadratic reciprocity}. 
	By $B<0$ odd and $A>0$, we compute 
	\begin{align}\label{Al1mod7 84A mod 8 2 Lambda A even}
		\begin{split}
			B(\tfrac AB)-d_1(\tfrac {7A}{d_1})&\equiv -d_1T(\tfrac {B}{A})(-1)^{\frac{\frac{A}{2^\lambda}-1}2\cdot\frac{B-1}2}-d_1(\tfrac{d_1}{7A})(-1)^{\frac{7\cdot\frac{A}{2^\lambda}-1}2\cdot\frac{d_1-1}2}\\
			& \equiv -d_1T(\tfrac {d_1-d_1A\ell}{A})(\tfrac 7A)(-1)^{\frac{\frac{A}{2^\lambda}-1}2\cdot\frac{B-1}2}-d_1(\tfrac{d_1}{A})(-1)^{\frac{7\cdot\frac{A}{2^\lambda}-1}2\cdot\frac{d_1-1}2}\Mod 4
		\end{split}
	\end{align}
	Here are the cases: 
	\begin{enumerate}
		\item If $4|A$, then we have $T\equiv 1\Mod 4$, $B\equiv -d_1\Mod 4$. Moreover, $(\frac{d_1-d_1A\ell}A)=(\frac {d_1}A)$ always (note that $A$ is even and we have to consider $(\frac{d_1}2)$). Now \eqref{Al1mod7 84A mod 8 2 Lambda A even} simplifies to $(\frac \ell 7)d_1+1\Mod 4$. In this case $d_1A^2\ell(1+\ell)\equiv 0\Mod{8\times 2^\lambda}$ and  we conclude 
		\begin{equation}\label{Al1mod7 84A mod 8 2 Lambda A even, result}
			-84A(s(d_1,7A)-s(B,A))\equiv \left\{\begin{array}{ll}
				2A+48\overline{d_{1\{A\}}}\Mod{8\times 2^\lambda },& \ell=1,2;\\
				6A+48\overline{d_{1\{A\}}}\Mod{8\times 2^\lambda },& \ell=3.
			\end{array}
			\right.
		\end{equation}
		\item If $2\|A$ and $\ell=1$, then $T\equiv 3\Mod 4$, $B\equiv d_1\Mod 4$ and the above \eqref{Al1mod7 84A mod 8 2 Lambda A even} simplifies to $d_1-1\Mod 4$. Then as $A(12d_1-6+2d_1A)\equiv 2A\Mod {8\times 2^\lambda}$, we conclude the same as the first line of \eqref{Al1mod7 84A mod 8 2 Lambda A even, result}. 
		\item If $2\|A$ and $\ell=2$, then $T\equiv 1\Mod 4$, $B\equiv -d_1\Mod 4$, and $(\frac{d_1-d_1A\ell}A)=-(\frac {d_1}A)$. Now \eqref{Al1mod7 84A mod 8 2 Lambda A even} gives $d_1-1\Mod 4$ and we again get the first line of \eqref{Al1mod7 84A mod 8 2 Lambda A even, result}. 
		\item If $2\|A$ and $\ell=3$, then $T\equiv 3\Mod 4$, $B\equiv d_1\Mod 4$, and $(\frac A7)=(\frac{3A}7)(\frac 37)=-1$. Here \eqref{Al1mod7 84A mod 8 2 Lambda A even} results in $d_1-1\Mod 4$ again. Note that $d_1A^2\ell(1+\ell)\equiv 0\Mod{8\times 2^\lambda }$ and we get the second line of \eqref{Al1mod7 84A mod 8 2 Lambda A even, result}. 
	\end{enumerate}
	
	Next we check the part of $Q_2$ other than $e^{-\pi i s(d,c)}$. In this case $A$ is even, so $3T+1$ is even and we have
	\[
	\(\tfrac 32 T^2+\tfrac 12 T \)C\equiv \tfrac {3T+1}2\cdot T(-7\overline{d_{1\{A\}}}) \equiv \frac{3T+1}2\overline{d_{1\{A\}}}\Mod A.
	\]
	When written with denominator $24\cdot7A$, we have
	\[
	e\(\frac{(\frac 32 T^2+\frac 12 T )C}A\)=e\(\frac{36A\ell\overline{ d_{1\{A\}}}+48\overline{ d_{1\{A\}}}}{24\cdot 7A} \)
	\]
	whose numerator is
	\begin{equation}\label{Al1mod7 TCA mod 168A, A even}
		36A\ell\overline{ d_{1\{A\}}}+48\overline{ d_{1\{A\}}}\equiv \left\{\begin{array}{ll}
			0\Mod 7,&\\
			48\overline{d_{1\{A\}}}\Mod {3A},&\\
			4A+48\overline{d_{1\{A\}}}\Mod{8\times 2^\lambda},&\ell=1,3,\\
			48\overline{d_{1\{A\}}}\Mod{8\times 2^\lambda},&\ell=2.\\
		\end{array}
		\right.  
	\end{equation}
	Combining the above computation with \eqref{Al1mod7 84A mod 7 mod A}, \eqref{Al1mod7 84A mod 8 2 Lambda A even, result} and \eqref{Congru Dedekind theta is or not 3}, we get
	\begin{equation}\label{Al1mod7 Q2 to P2, A even}
		\Arg(Q_2\rightarrow P_2;\ell)=\frac{9,11,27}{28}\quad \text{for }\ell=1,2,3. 
	\end{equation}
	Then we compute $\Arg(Q_1\rightarrow P_1;\ell)$. When $\ell=1$, since $A$ is even, $\frac A2\equiv 4\Mod {7}$. Note that $a_1\equiv 1\Mod{14}$ because $a_1$ is odd. It is direct to get (remember $Q_1=i$)
	\begin{equation}\label{Al1mod7 Q1 to P1 A even l=1}
		\Arg(Q_1\rightarrow P_1;1)=\frac12-\frac 5{7}-\frac 14=-\frac{13}{28}. 
	\end{equation}
	When $\ell=2$, we get $\frac A2\equiv 2\Mod 7$ and 
	\begin{equation}\label{Al1mod7 Q1 to P1 A even l=2}
		\Arg(Q_1\rightarrow P_1;2)=\frac12-\frac 3{7}-\frac 14=-\frac{5}{28}. 
	\end{equation}
	When $\ell=3$, we have $\frac A2\equiv 6\Mod {14}$ and 
	\begin{equation}\label{Al1mod7 Q1 to P1 A even l=3}
		\Arg(Q_1\rightarrow P_1;3)=\frac12-\frac {1}{7}-\frac 14=\frac{3}{28}. 
	\end{equation}
	
	Combining \eqref{Al1mod7 Q1 to P1 A even l=1}, \eqref{Al1mod7 Q1 to P1 A even l=2}, \eqref{Al1mod7 Q1 to P1 A even l=3}, \eqref{Al1mod7 Q2 to P2, A even}, and \eqref{Al1mod7 Q3 to P3}, we get 
	\begin{equation}
		\Arg(Q\rightarrow P;\ell)=-\frac 37, -\frac 5{14}, \frac 3{14} \quad \text{for }\ell=1,2,3. 
	\end{equation}
	This proves \eqref{claim 7T+1 Arg diff}.

	\subsection{Case \texorpdfstring{$c'\ell\equiv 1\Mod 7, \ 2| A,$ and $3| A$}{1 Mod 7, A even, 3 divides A}}\label{Case 1 Mod 7, A even, 3 divides A}
	Comparing to the former case, the only difference in getting $\Arg(Q_2\rightarrow P_2;\ell)$ in \eqref{Al1mod7 Q2 to P2, A even} is that we should using \eqref{Al1mod7 84A mod 3A, A odd divisible by 3} instead of \eqref{Al1mod7 84A mod 7 mod A}. The result \eqref{Al1mod7 Q2 to P2, A even} still holds in this case. The condition $3|A$ or $3\nmid A$ does not affect the computation for $\Arg(Q_1\rightarrow P_1;\ell)$ and $\Arg(Q_3\rightarrow P_3;\ell)$, hence we still have \eqref{claim 7T+1 Arg diff}: 
	\begin{equation}
		\Arg(Q\rightarrow P;\ell)=-\frac 37, -\frac 5{14}, \frac 3{14} \quad \text{for }\ell=1,2,3. 
	\end{equation}
	
	Now we have finished the proof in all the cases for $A$ when $A\ell\equiv 1\Mod 7$ and proved \eqref{claim 7T+1 Arg diff} in Proposition~\ref{claim 7T+1 Arg diff proposition}. For the other case $A\ell\equiv -1\Mod 7$, we will not repeat the same process but just list the key argument differences below. For every $r\Mod{c'}^*$, we compare $P(d_1)$ \eqref{P d1} given $d_1\in V(r,c)$ and $Q(B)$ \eqref{Q B -1} given 
	\[T\defeq \frac{A\ell+1}{7}>0,\quad B=d_1T\quad \text{and \ \ } C=-7\overline{ d_{1\{A\}}}.\]
	Now $7B=d_1+d_1A\ell$. We shall get Table~\ref{table: Almod -1 all results}. 
	\begin{table}[!htbp]
		\centering
		\begin{tabular}{|c|c|c|c|}
			\hline
			Case $2\nmid A$: & $\ell=1$ & $\ell=2$ & $\ell=3$\\
			\hline
			$\Arg(Q_1\rightarrow P_1;\ell)$ & $-\frac 1{28}$ & $\frac 5{28}$ & $\frac {11}{28}$ \\
			[1ex]
			$\Arg(Q_2\rightarrow P_2;\ell)$ & $\frac 5{28}$ & $-\frac {11}{28}$ & $-\frac {13}{28}$ \\
			[1ex]
			$\Arg(Q_3\rightarrow P_3;\ell)$ & $\frac 2{7}$ & $-\frac 3{7}$ & $-\frac 1{7}$ \\
			[1ex]
			$\Arg(Q\rightarrow P;\ell)$ & $\frac 3{7}$ & $\frac 5{14}$ & $-\frac 3{14}$ \\
			\hhline{|=|=|=|=|}
			Case $2| A$: & $\ell=1$ & $\ell=2$ & $\ell=3$\\
			\hline
			$\Arg(Q_1\rightarrow P_1;\ell)$ & $\frac {13}{28}$ & $\frac 5{28}$ & $-\frac 3{28}$ \\
			[1ex]
			$\Arg(Q_2\rightarrow P_2;\ell)$ & $-\frac 9{28}$ & $-\frac {11}{28}$ & $\frac 1{28}$ \\
			[1ex]
			$\Arg(Q_3\rightarrow P_3;\ell)$ & $\frac 2{7}$ & $-\frac 3{7}$ & $-\frac 1{7}$ \\
			[1ex]
			$\Arg(Q\rightarrow P;\ell)$ & $\frac 3{7}$ & $\frac 5{14}$ & $-\frac 3{14}$ \\
			\hline 
		\end{tabular}
		\vspace{0.5ex}
		\caption{Table for the case $A\ell\equiv -1\Mod 7$ }
		\label{table: Almod -1 all results}
	\end{table}
	
	We have finished the proof of (7-5,2) of Theorem~\ref{Kloosterman sums vanish}.

	\section{Part (ii) of Theorem~\ref{Kloosterman sums vanish}}
	\label{Section part ii}
	
	For prime $p=5,7$ and integers $a,b$, recall the notation 
	\[C_p^{a,b}= \cos(\tfrac{a\pi}p)-\cos(\tfrac{b\pi}p). \]
	
	\subsection{(5-1) and (5-2) of Theorem~\ref{Kloosterman sums vanish}}
	
	We still denote $c'=c/5$ and first deal with the case $25|c$. For $(r,c)=1$, recall \eqref{Arg diff d to d star, 5n+4} with $V(r,c)$, $d$ and $d_*$ in that subsection. By \eqref{Arg diff d to d star, 5n+4}, we have $\Arg(d\rightarrow d_*;\ell)=-\frac 15$ for the $5n+4$ case. Since we have the $5n+1$ case here, we obtain
	\[\Arg(d\rightarrow d_*;\ell)=-\tfrac 15-\tfrac {3c'}c=-\tfrac 45. \]
	Hence we get $S_{\infty\infty}^{(\ell)}(0,5n+1,c,\mu_5)=0$ for $\ell\in \{1,2\}$, $25|c$, and every $n\geq 0$. This proves (5-1) when $25|c$. 
	
	Similarly, in the $5n+2$ case, we obtain
	\[\Arg(d\rightarrow d_*;\ell)=-\tfrac 15-\tfrac {2c'}c=-\tfrac 35. \]
	We still get $S_{\infty\infty}^{(\ell)}(0,5n+2,c,\mu_5)=0$ for $\ell\in \{1,2\}$, $25|c$, and every $n\geq 0$, which proves (5-2) when $25|c$. 
	
	Now we focus on the case $5\|c$. For $(r,c)=1$, recall the notation of $V(r,c)$, $d_j$ and $a_j$ in \eqref{Vrc dj choice jbetac' p=5}. 
	By \eqref{P1P2P3} and Condition~\ref{Styles four points mod 5}, we find that the argument differences of $P(d)$ for $d\in V(r,c)$ only depends on $c'\Mod 5$. Moreover, in \eqref{S infty infty for mod 5 using}, if we change $5n+4$ to $5n+1$ or to $5n+2$, then only the argument of $P_3(d)$ is affected. Recall that we define $\beta\in \{1,2,3,4\}$ by $\beta c'\equiv 1\Mod 5$. 
	
	\subsubsection{The $5n+1$ case}
	As Proposition~\ref{Vrc sum equals 0}, we denote
	\begin{equation}{\label{Vrc sum equals 0, 5n+1 case}}
		s_{r,c}^{(\ell)}=\sum_{d\in V(r,c)}P_1(d)P_2(d)P_3(d)\defeq \sum_{d\in V(r,c)} \frac{e\(-\frac{3c'a\ell^2}{10}\)}{\sin(\frac{\pi a\ell}{5})}e^{-\pi is(d,c)}e\(\frac{d}c\).
	\end{equation}
	Here $P_3(d)=e(\frac{d}c)$ instead of $e(\frac{4d}c)$ in the $5n+4$ case Proposition~\ref{Vrc sum equals 0}. To prove (5-1) of Theorem~\ref{Kloosterman sums vanish}, it suffices to show
	\begin{equation}\label{src sum to 0, 5n+1 case}
		C_5^{2,4}\sin(\tfrac \pi 5) s_{r,c}^{(1)}+C_5^{4,2}\sin(\tfrac {2\pi} 5) s_{r,c}^{(2)}=0. 
	\end{equation}
	
	When we compute $\Arg_3(d_j\rightarrow d_{j+1};\ell)$ for $j=1,2,3$, previously it was $e(\frac{4\beta}5)$ and now it should be $e(\frac \beta 5)$. Therefore, we need to subtract $\frac {3\beta}5$ from the argument differences in Condition~\ref{Styles four points mod 5} in each case. 
	
	Since we need both $\ell=1$ and $\ell=2$ appears at the same time, we write our new condition in the following way. It is important to note that the way we compute $\Arg(d_4\rightarrow d_1;\ell)$ is by 
	\[\sum_{j=1}^{3}\Arg(d_j\rightarrow d_{j+1};\ell)+\Arg(d_4\rightarrow d_1;\ell)=0 \]
	but not by subtracting $\frac{3\beta}5$. 
	
	\begin{condition}\label{Styles 5n+1}
		For the $5n+1$ case, we have the following styles of argument differences:
		\begin{enumerate}
			\item[$\bullet$] $c'\equiv 1\Mod 5$, $\beta=1$;
			\begin{table}[!htbp]
				\centering
				\begin{tabular}{|c|ccccccccc|}
					\hline
					$c'\equiv 1\Mod 5$ & $d_1$ & $\rightarrow$ & $d_2$ & $\rightarrow$ & $d_3$ & $\rightarrow$ & $d_4$ & $\rightarrow$ & $d_1$ \\
					\hline
					$\Arg(d_u\rightarrow d_v;1)$  & & $-\frac 3{10}$ & & $\frac 1{2}$ & & $-\frac 3{10}$ & & $\frac 1{10}$ & \ \\
					[1ex]
					$\Arg(d_u\rightarrow d_v;2)$ & & $\frac 2{5}$ & & $-\frac 1{10}$ & & $\frac 2{5}$ & & $\frac 3{10}$ & \ \\
					\hline
				\end{tabular}
			\end{table}
			
			\item[$\bullet$] $c'\equiv 2\Mod 5$, $\beta =3$;
			\begin{table}[!htbp]
				\centering
				\begin{tabular}{|c|ccccccccc|}
					\hline
					$c'\equiv 2\Mod 5$ & $d_1$ & $\rightarrow$ & $d_2$ & $\rightarrow$ & $d_3$ & $\rightarrow$ & $d_4$ & $\rightarrow$ & $d_1$ \\
					\hline
					$\Arg(d_u\rightarrow d_v;1)$  & & $-\frac 3{10}$ & & $-\frac 3{10}$ & & $-\frac 3{10}$ & & $-\frac 1{10}$ & \ \\
					[1ex]
					$\Arg(d_u\rightarrow d_v;2)$ & & $-\frac 2{5}$ & & $\frac 1{2}$ & & $-\frac 2{5}$ & & $\frac 3{10}$ & \ \\
					\hline
				\end{tabular}
			\end{table}
			
			\item[$\bullet$] $c'\equiv 3\Mod 5$, $\beta =2$; 
			\begin{table}[!htbp]
				\centering
				\begin{tabular}{|c|ccccccccc|}
					\hline
					$c'\equiv 3\Mod 5$ & $d_1$ & $\rightarrow$ & $d_2$ & $\rightarrow$ & $d_3$ & $\rightarrow$ & $d_4$ & $\rightarrow$ & $d_1$ \\
					\hline
					$\Arg(d_u\rightarrow d_v;1)$  & & $\frac 3{10}$ & & $\frac 3{10}$ & & $\frac 3{10}$ & & $\frac 1{10}$ & \ \\
					[1ex]
					$\Arg(d_u\rightarrow d_v;2)$ & & $\frac 2{5}$ & & $\frac 1{2}$ & & $\frac 2{5}$ & & $-\frac 3{10}$ & \ \\
					\hline
				\end{tabular}
			\end{table} 
			
			\item[$\bullet$] $c'\equiv 4\Mod 5$, $\beta =4$.
			\begin{table}[!htbp]
				\centering
				\begin{tabular}{|c|ccccccccc|}
					\hline
					$c'\equiv 4\Mod 5$ & $d_1$ & $\rightarrow$ & $d_2$ & $\rightarrow$ & $d_3$ & $\rightarrow$ & $d_4$ & $\rightarrow$ & $d_1$ \\
					\hline
					$\Arg(d_u\rightarrow d_v;1)$  & & $\frac 3{10}$ & & $\frac 1{2}$ & & $\frac 3{10}$ & & $-\frac 1{10}$ & \ \\
					[1ex]
					$\Arg(d_u\rightarrow d_v;2)$ & & $-\frac 2{5}$ & & $\frac 1{10}$ & & $-\frac 2{5}$ & & $-\frac 3{10}$ & \ \\
					\hline
				\end{tabular}
			\end{table}
		\end{enumerate}
	\end{condition}

	The condition above corresponding to the following styles of $P(d)$ for $d\in V(r,c)$:  
	\begin{center}
		\includegraphics[scale=0.3]{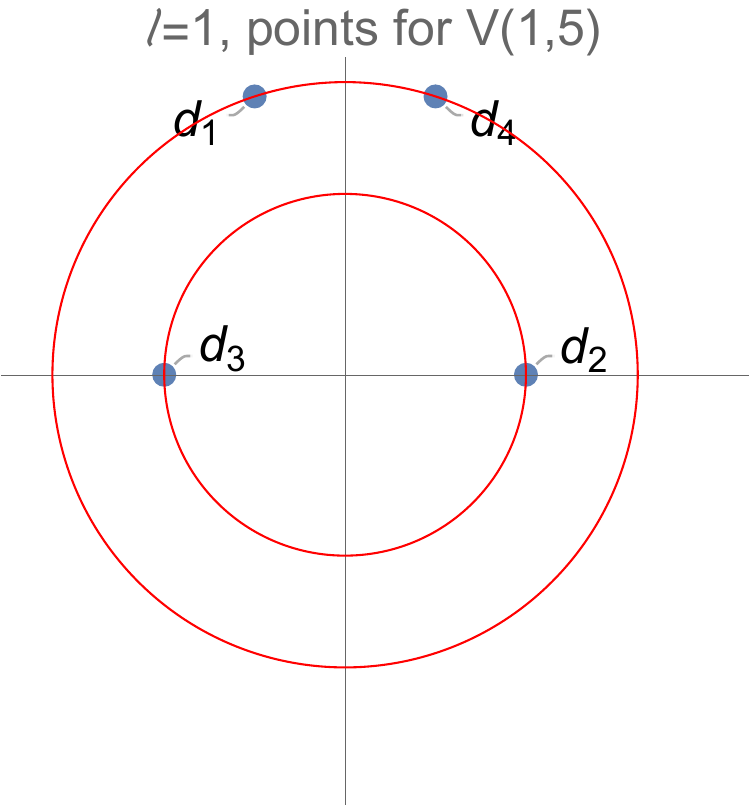}
		\includegraphics[scale=0.3]{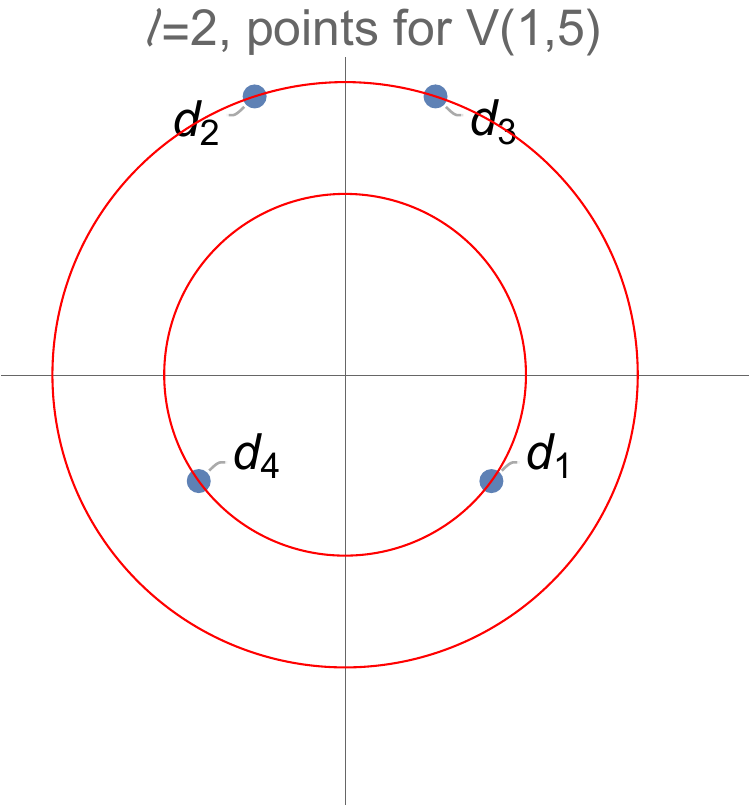}
		\qquad
		\includegraphics[scale=0.3]{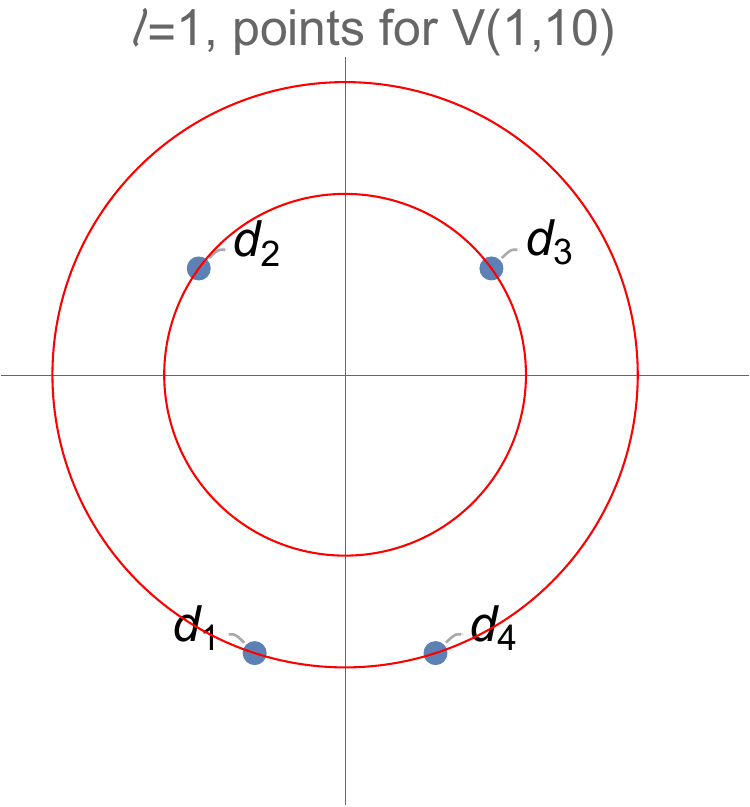}
		\includegraphics[scale=0.3]{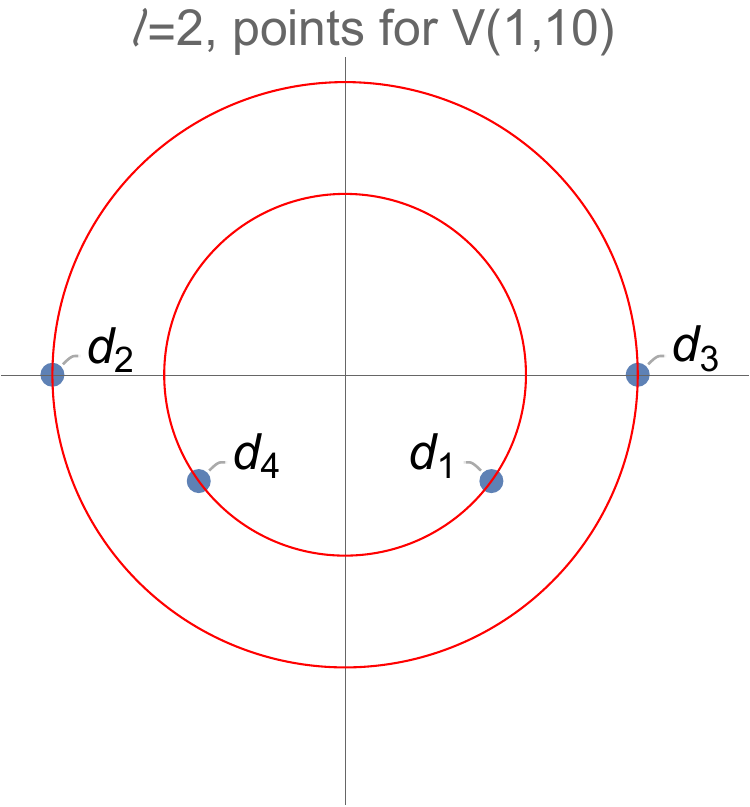}
		\\
		\includegraphics[scale=0.3]{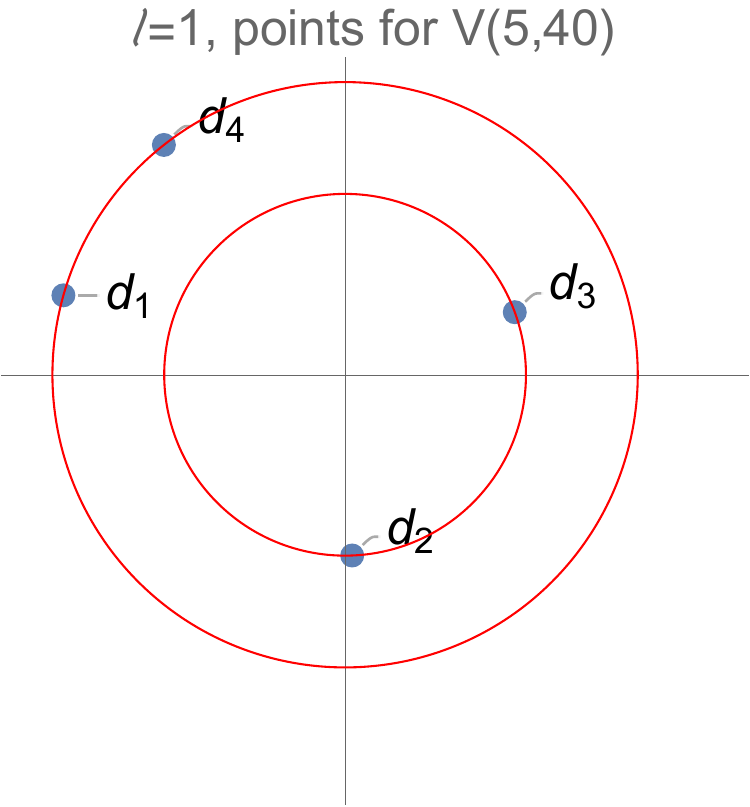}
		\includegraphics[scale=0.3]{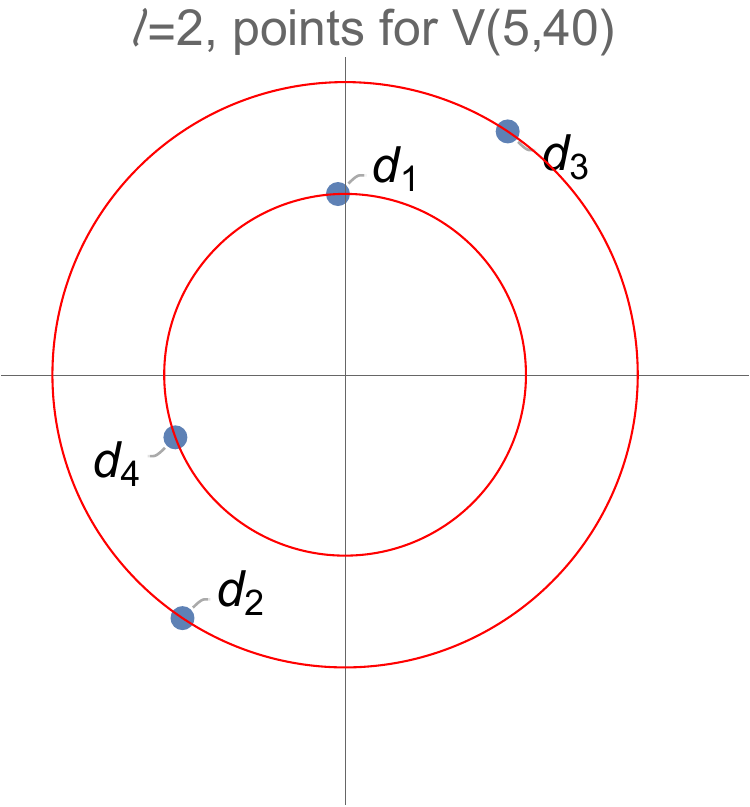}
		\qquad
		\includegraphics[scale=0.3]{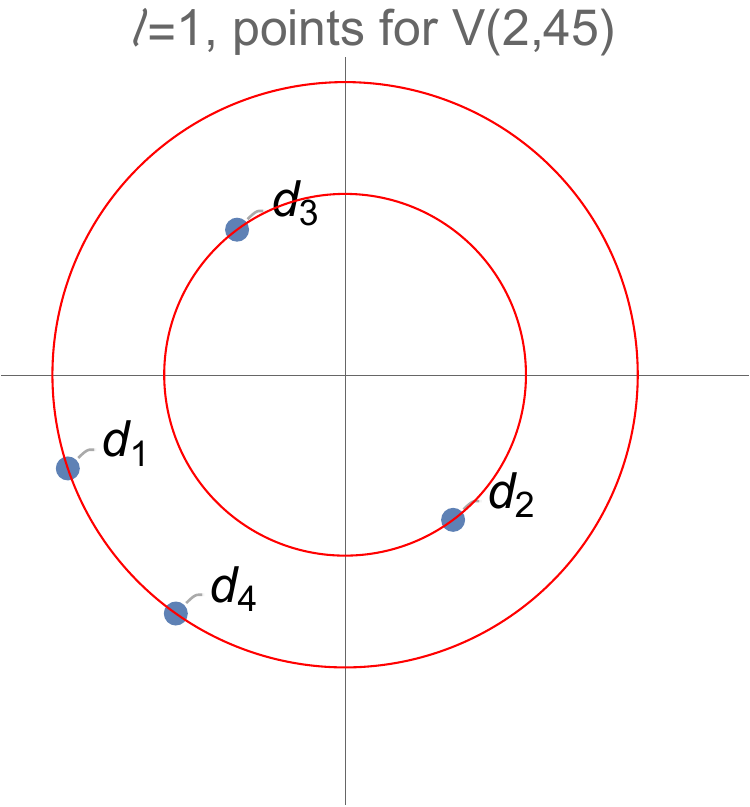}
		\includegraphics[scale=0.3]{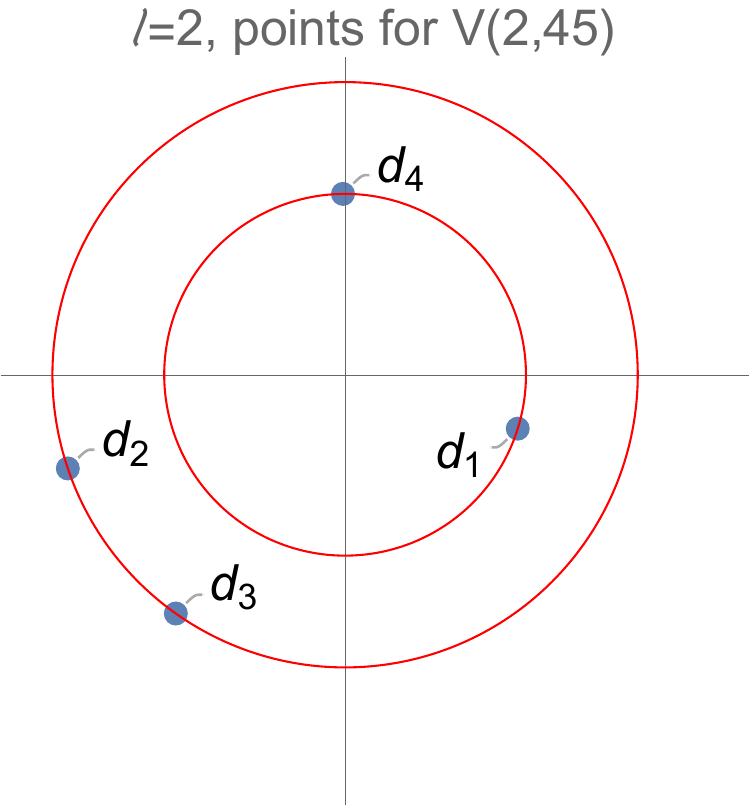}
	\end{center}
	For every two graphs close to each other in a row which satisfy the argument differences in the corresponding cases in Condition~\ref{Styles 5n+1}, it proves \eqref{src sum to 0, 5n+1 case} due to the following equations: 
	\begin{align*}
		C_5^{2,4}\cos(\tfrac{\pi}{10})+C_5^{4,2}\sin(\tfrac{2\pi}{5})\(\frac{\cos(\frac{\pi}{10})}{\sin(\frac{\pi}5)}-\frac{\cos(\frac{3\pi}{10})}{\sin(\frac{2\pi}5)}\)&=0,\quad \text{for }c'\equiv 1,4\Mod 5;\\
		C_5^{2,4}\sin(\tfrac{\pi}5)\(\frac{\cos(\frac{\pi}{10})}{\sin(\frac{\pi}5)}-\frac{\cos(\frac{3\pi}{10})}{\sin(\frac{2\pi}5)}\)+C_5^{4,2}\cos(\tfrac{3\pi}{10})&=0,\quad \text{for }c'\equiv 2,3\Mod 5. 
	\end{align*}
	This proves (5-1) of Theorem~\ref{Kloosterman sums vanish}.

	\subsubsection{The $5n+2$ case}
	As \eqref{Vrc sum equals 0, 5n+1 case}, we denote $P_3(d)=e(\frac{2d}c)$ instead of $e(\frac{4d}c)$ in the $5n+4$ case Proposition~\ref{Vrc sum equals 0} and instead of $e(\frac{d}c)$ in the $5n+1$ case \eqref{Vrc sum equals 0, 5n+1 case}. To prove (5-2) of Theorem~\ref{Kloosterman sums vanish}, it suffices to show
	\begin{equation}\label{src sum to 0, 5n+2 case}
		C_5^{0,4}\sin(\tfrac \pi 5) s_{r,c}^{(1)}+C_5^{0,2}\sin(\tfrac {2\pi} 5) s_{r,c}^{(2)}=0. 
	\end{equation}
	
	When we compute $\Arg_3(d_j\rightarrow d_{j+1};\ell)$ for $j=1,2,3$, in \eqref{Vrc sum equals 0, 5n+1 case} it was $e(\frac{\beta}5)$ and now it should be $e(\frac{2\beta}5)$. Therefore, we need to add $\frac {\beta}5$ to the argument differences in Condition~\ref{Styles 5n+1} in each case to get the following condition. 
	\begin{condition}\label{Styles 5n+2}
		For the $5n+2$ case, we have the following styles of argument differences:
		\begin{enumerate}
			\item[$\bullet$] $c'\equiv 1\Mod 5$, $\beta=1$; 
			\begin{table}[!htbp]
				\centering
				\begin{tabular}{|c|ccccccccc|}
					\hline
					$c'\equiv 1\Mod 5$ & $d_1$ & $\rightarrow$ & $d_2$ & $\rightarrow$ & $d_3$ & $\rightarrow$ & $d_4$ & $\rightarrow$ & $d_1$ \\
					\hline
					$\Arg(d_u\rightarrow d_v;1)$  & & $-\frac 1{10}$ & & $-\frac 3{10}$ & & $-\frac 1{10}$ & & $\frac 1{2}$ & \ \\
					[1ex]
					$\Arg(d_u\rightarrow d_v;2)$ & & $-\frac 2{5}$ & & $\frac 1{10}$ & & $-\frac 2{5}$ & & $-\frac 3{10}$ & \ \\
					\hline
				\end{tabular}
			\end{table}
			
			\item[$\bullet$] $c'\equiv 2\Mod 5$, $\beta =3$;
			\begin{table}[!htbp]
				\centering
				\begin{tabular}{|c|ccccccccc|}
					\hline
					$c'\equiv 2\Mod 5$ & $d_1$ & $\rightarrow$ & $d_2$ & $\rightarrow$ & $d_3$ & $\rightarrow$ & $d_4$ & $\rightarrow$ & $d_1$ \\
					\hline
					$\Arg(d_u\rightarrow d_v;1)$  & & $\frac 3{10}$ & & $\frac 3{10}$ & & $\frac 3{10}$ & & $\frac 1{10}$ & \ \\
					[1ex]
					$\Arg(d_u\rightarrow d_v;2)$ & & $\frac 1{5}$ & & $\frac 1{10}$ & & $\frac 1{5}$ & & $\frac 1{2}$ & \ \\
					\hline
				\end{tabular}
			\end{table}
			
			\item[$\bullet$] $c'\equiv 3\Mod 5$, $\beta =2$;
			\begin{table}[!htbp]
				\centering
				\begin{tabular}{|c|ccccccccc|}
					\hline
					$c'\equiv 3\Mod 5$ & $d_1$ & $\rightarrow$ & $d_2$ & $\rightarrow$ & $d_3$ & $\rightarrow$ & $d_4$ & $\rightarrow$ & $d_1$ \\
					\hline
					$\Arg(d_u\rightarrow d_v;1)$  & & $-\frac 3{10}$ & & $-\frac 3{10}$ & & $-\frac 3{10}$ & & $-\frac 1{10}$ & \ \\
					[1ex]
					$\Arg(d_u\rightarrow d_v;2)$ & & $-\frac 1{5}$ & & $-\frac 1{10}$ & & $-\frac 1{5}$ & & $\frac 1{2}$ & \ \\
					\hline
				\end{tabular}
			\end{table} 
			
			\item[$\bullet$] $c'\equiv 4\Mod 5$, $\beta =4$. 
			\begin{table}[!htbp]
				\centering
				\begin{tabular}{|c|ccccccccc|}
					\hline
					$c'\equiv 4\Mod 5$ & $d_1$ & $\rightarrow$ & $d_2$ & $\rightarrow$ & $d_3$ & $\rightarrow$ & $d_4$ & $\rightarrow$ & $d_1$ \\
					\hline
					$\Arg(d_u\rightarrow d_v;1)$  & & $\frac 1{10}$ & & $\frac 3{10}$ & & $\frac 1{10}$ & & $\frac 1{2}$ & \ \\
					[1ex]
					$\Arg(d_u\rightarrow d_v;2)$ & & $\frac 2{5}$ & & $-\frac 1{10}$ & & $\frac 2{5}$ & & $\frac 3{10}$ & \ \\
					\hline
				\end{tabular}
			\end{table}
		\end{enumerate}
	\end{condition}
	The condition above corresponding to the following styles of $P(d)$ for $d\in V(r,c)$. 
	\begin{center}
		\includegraphics[scale=0.3]{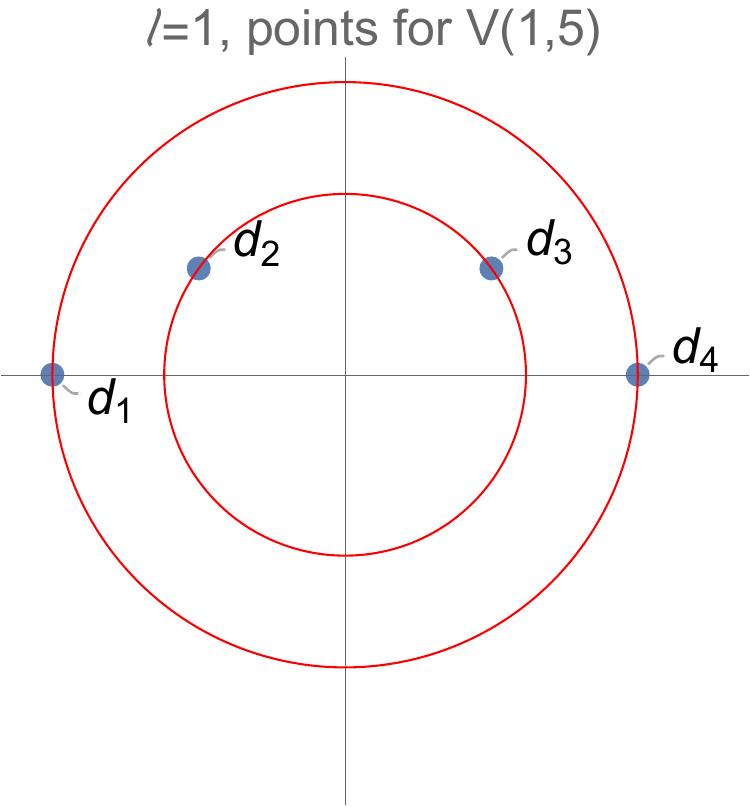}
		\includegraphics[scale=0.3]{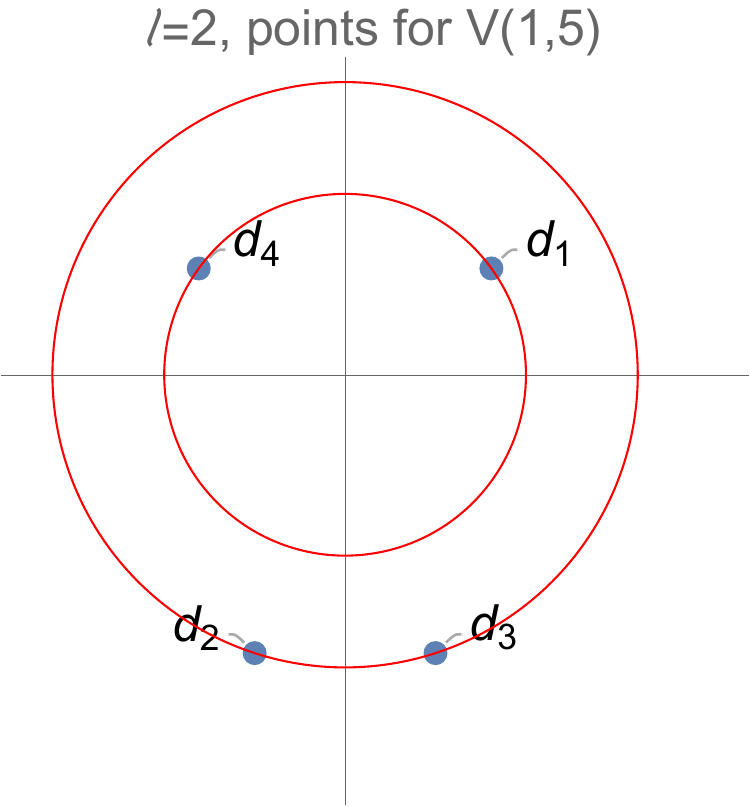}
		\qquad
		\includegraphics[scale=0.3]{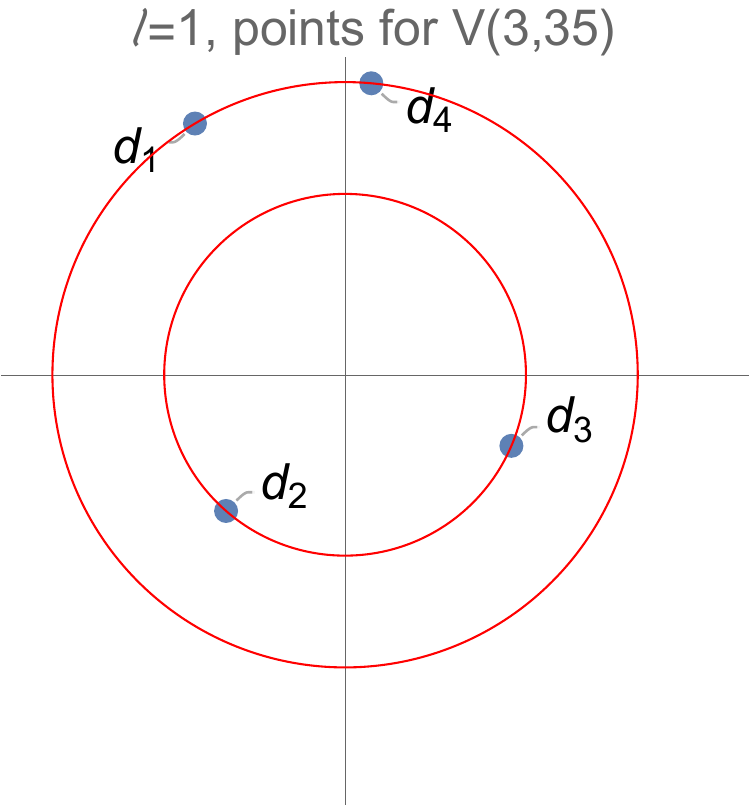}
		\includegraphics[scale=0.3]{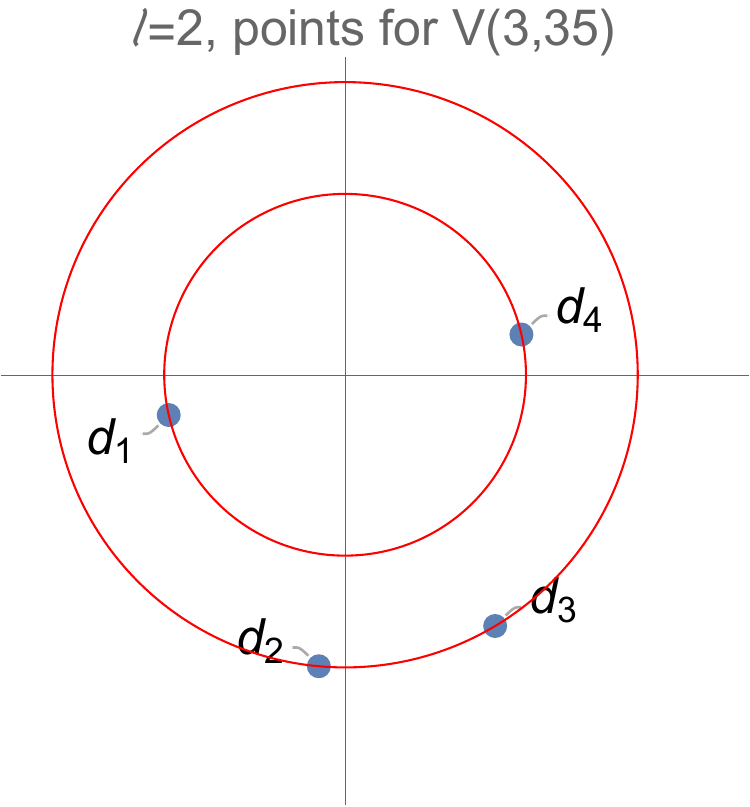}
		\\
		\includegraphics[scale=0.3]{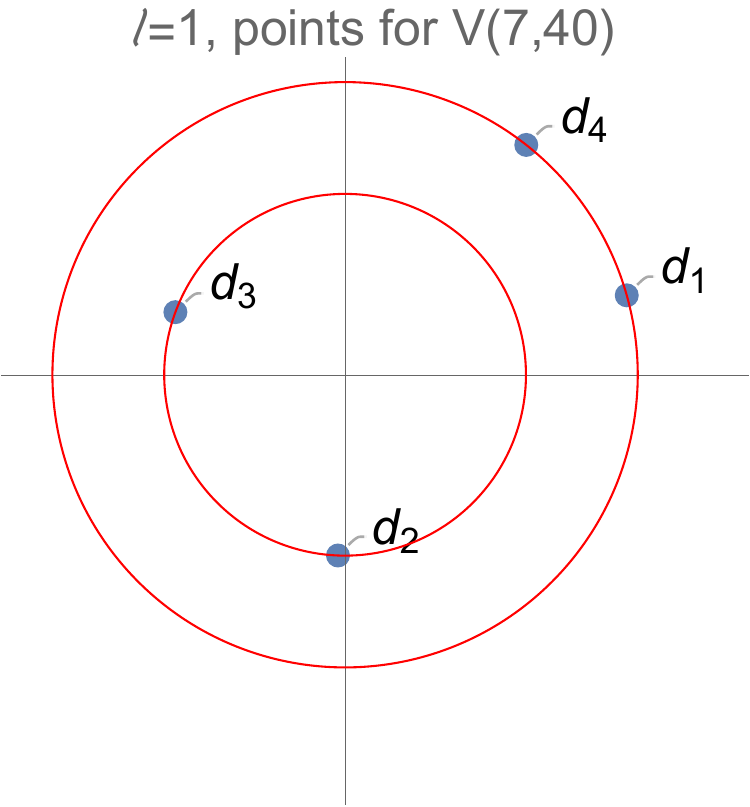}
		\includegraphics[scale=0.3]{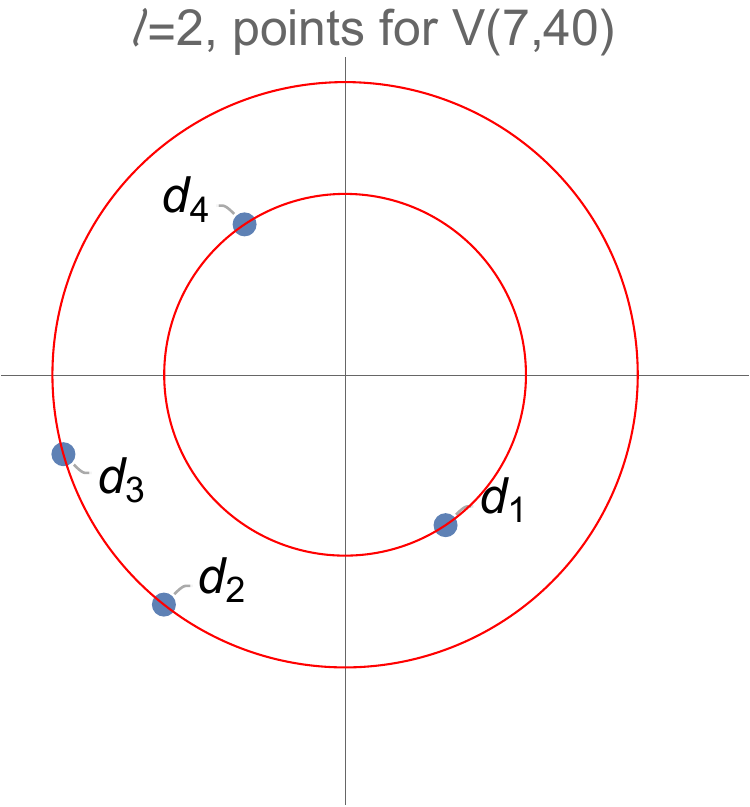}
		\qquad 
		\includegraphics[scale=0.3]{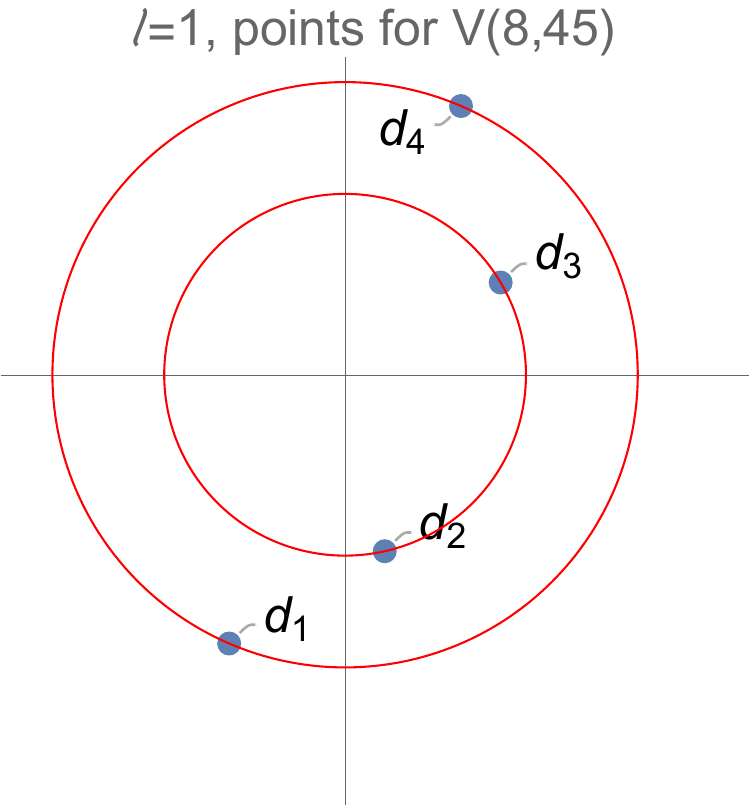}
		\includegraphics[scale=0.3]{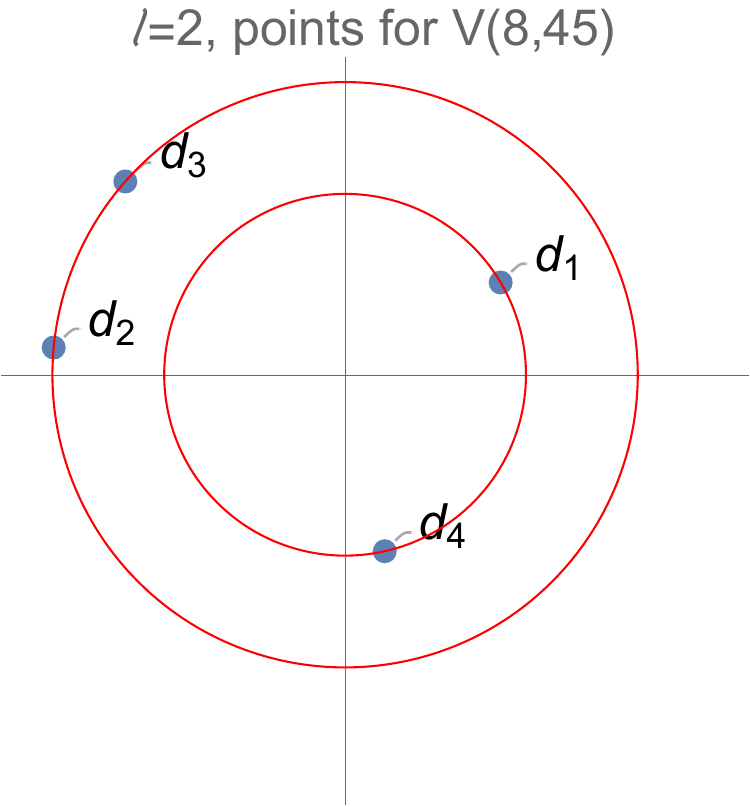}
	\end{center}
	For every two graphs with same $c$ which satisfy the argument differences in the corresponding cases in Condition~\ref{Styles 5n+2}, it proves \eqref{src sum to 0, 5n+2 case} due to the following equations: 
	\begin{align*}
		C_5^{0,4}\sin(\tfrac \pi 5)\cdot\frac{\cos(\tfrac{3\pi}{10})}{\sin(\frac{2\pi}5)}+C_5^{0,2}\sin(\tfrac{2\pi}{5})\(\frac{\cos(\frac{3\pi}{10})}{\sin(\frac{2\pi}5)}-\frac{\cos(\frac{\pi}{10})}{\sin(\frac{\pi}5)}\)&=0,\quad \text{for }c'\equiv 1,4\Mod 5;\\
		C_5^{0,4}\sin(\tfrac{\pi}{5})\(\frac{\cos(\frac{\pi}{10})}{\sin(\frac{\pi}5)}-\frac{\cos(\frac{3\pi}{10})}{\sin(\frac{2\pi}5)}\)-C_5^{0,2}\sin(\tfrac {2\pi} 5)\cdot\frac{\cos(\tfrac{\pi}{10})}{\sin(\frac{\pi}5)}&=0,\quad \text{for }c'\equiv 2,3\Mod 5. 
	\end{align*}
	This proves (5-2) of Theorem~\ref{Kloosterman sums vanish}.

	\subsection{Restate the condition for (7-5) of Theorem~\ref{Kloosterman sums vanish}}
	We still denote $c=7A=7c'$. 
	When $49|c$, recall the notation in \S\ref{Subsection 7n+5: 49|c} and we have \eqref{Arg diff 7n+5 case 49|c} for any $d\in V(r,c)$ and $d_*=d+c'$: 
	\begin{equation}\label{Arg diff 7n+5 case 49|c, last section using}
		\Arg(d\rightarrow d_*;\ell)=\left\{\begin{array}{ll}
			-\frac 27 &\ d\equiv 1,6\Mod 7;\vspace{1ex}\\
			\frac 37 &\ d\equiv 2,5\Mod 7;\vspace{1ex}\\ 
			-\frac 17 &\ d\equiv 3,4\Mod 7. 
		\end{array}
		\right.
	\end{equation}

	When $7\|c'$, denote $A=c'=c/7$ and recall the notation of $V(r,c)$ before \eqref{Vrc sum equals 0, mod 7} and $d_j$ and $a_j$ in \eqref{ajinverse7 dj =1}. We combine Condition~\ref{Styles six points mod 7}, Condition~\ref{Styles six points mod 7, 7Tpm1 case}, \eqref{claim 7T+1 Arg diff} and \eqref{claim 7T-1 Arg diff} and get the following condition: 
	\begin{condition}\label{Styles six points mod 7 7n+5}
		For the $7n+5$ case, we have the following conditions on $\Arg(Q\rightarrow P;\ell)$ when $A\ell\equiv \pm 1\Mod 7$ with tables for $\Arg_j(d_u\rightarrow d_v;\ell)$.  
		\begin{enumerate}
			\item[$\bullet$] $c'\equiv 1\Mod 7$. $A\cdot 1=7T+1$, $\Arg(Q\rightarrow P;1)=-\frac 37$; 
			\begin{table}[!htbp]
				\centering
				\begin{tabular}{|c|ccccccccccccc|}
					\hline
					$c'\equiv 1\Mod 7$ & $d_1$ & $\rightarrow$ & $d_2$ & $\rightarrow$ & $d_3$ & $\rightarrow$ & $d_4$ & $\rightarrow$ & $d_5$ & $\rightarrow$ & $d_6$ & $\rightarrow$ & $d_1$\\
					\hline
					$\Arg(d_u\rightarrow d_v;1)$  & & $\frac 3{14}$ & & $-\frac 3{7}$ & & $\frac 2{7}$ & & $-\frac 3{7}$ &  & $\frac 3{14}$ & & $\frac 1{7}$ & \ \\
					[1ex]
					$\Arg(d_u\rightarrow d_v;2)$ & & $-\frac 3{14}$ & & $-\frac 1{14}$ & & $-\frac 2{7}$ & & $-\frac 1{14}$  &  & $-\frac 3{14}$ & & $-\frac 1{7}$ & \ \\
					[1ex]
					$\Arg(d_u\rightarrow d_v;3)$ & & $-\frac 				3{7}$ & & $\frac 5{14}$ & & $\frac 3{7}$ & & $\frac 5{14}$  &  & $-\frac 3{7}$ & & $-\frac 2{7}$ & \ \\
					\hline
				\end{tabular}
			\end{table}
			
			\item[$\bullet$] $c'\equiv 2\Mod 7$. $A\cdot 3=7T-1$, $\Arg(Q\rightarrow P;3)=-\frac 3{14}$; 
			\begin{table}[!htbp]
				\centering
				\begin{tabular}{|c|ccccccccccccc|}
					\hline
					$c'\equiv 2\Mod 7$ & $d_1$ & $\rightarrow$ & $d_2$ & $\rightarrow$ & $d_3$ & $\rightarrow$ & $d_4$ & $\rightarrow$ & $d_5$ & $\rightarrow$ & $d_6$ & $\rightarrow$ & $d_1$\\
					\hline
					$\Arg(d_u\rightarrow d_v;1)$  & & $-\frac 5{14}$ & & $-\frac 2{7}$ & & $-\frac 1{7}$ & & $-\frac 2{7}$ &  & $-\frac 5{14}$ & & $\frac 3{7}$ & \ \\
					[1ex]
					$\Arg(d_u\rightarrow d_v;2)$ & & $-\frac 3{14}$ & & $-\frac 1{14}$ & & $-\frac 2{7}$ & & $-\frac 1{14}$  &  & $-\frac 3{14}$ & & $-\frac 1{7}$ & \ \\
					[1ex]
					$\Arg(d_u\rightarrow d_v;3)$ & & $-\frac 1{7}$ & & $-\frac 3{14}$ & & $\frac 1{7}$ & & $-\frac 3{14}$  &  & $-\frac 1{7}$ & & $-\frac 3{7}$ & \ \\
					\hline
				\end{tabular}
			\end{table}
			
			\item[$\bullet$] $c'\equiv 3\Mod 7$. $A\cdot2=7T-1$, $\Arg(Q\rightarrow P;2)=\frac 5{14}$;
			\begin{table}[!htbp]
				\centering
				\begin{tabular}{|c|ccccccccccccc|}
					\hline
					$c'\equiv 3\Mod 7$ & $d_1$ & $\rightarrow$ & $d_2$ & $\rightarrow$ & $d_3$ & $\rightarrow$ & $d_4$ & $\rightarrow$ & $d_5$ & $\rightarrow$ & $d_6$ & $\rightarrow$ & $d_1$\\
					\hline
					$\Arg(d_u\rightarrow d_v;1)$  & & $\frac 5{14}$ & & $\frac 2{7}$ & & $\frac 1{7}$ & & $\frac 2{7}$ &  & $\frac 5{14}$ & & $-\frac 3{7}$ & \ \\
					[1ex]
					$\Arg(d_u\rightarrow d_v;2)$ & & $\frac 1{14}$ & & $\frac 5{14}$ & & $\frac 3{7}$ & & $\frac 5{14}$  &  & $\frac 1{14}$ & & $-\frac 2{7}$ & \ \\
					[1ex]
					$\Arg(d_u\rightarrow d_v;3)$ & & $\frac3{7}$ & & $-\frac 5{14}$ & & $-\frac 3{7}$ & & $-\frac 5{14}$  &  & $\frac 3{7}$ & & $\frac 2{7}$ & \ \\
					\hline
				\end{tabular}
			\end{table}
			
			\item[$\bullet$] $c'\equiv 4\Mod 7$. $A\cdot 2=7T+1$, $\Arg(Q\rightarrow P;2)=-\frac 5{14}$; 
			\begin{table}[!htbp]
				\centering
				\begin{tabular}{|c|ccccccccccccc|}
					\hline
					$c'\equiv 4\Mod 7$ & $d_1$ & $\rightarrow$ & $d_2$ & $\rightarrow$ & $d_3$ & $\rightarrow$ & $d_4$ & $\rightarrow$ & $d_5$ & $\rightarrow$ & $d_6$ & $\rightarrow$ & $d_1$\\
					\hline
					$\Arg(d_u\rightarrow d_v;1)$  & & $-\frac 5{14}$ & & $-\frac 2{7}$ & & $-\frac 1{7}$ & & $-\frac 2{7}$ &  & $-\frac 5{14}$ & & $-\frac 3{7}$ & \ \\
					[1ex]
					$\Arg(d_u\rightarrow d_v;2)$ & & $-\frac 1{14}$ & & $-\frac 5{14}$ & & $-\frac 3{7}$ & & $-\frac 5{14}$  &  & $-\frac 1{14}$ & & $\frac 2{7}$ & \ \\
					[1ex]
					$\Arg(d_u\rightarrow d_v;3)$ & & $-\frac 3{7}$ & & $\frac 5{14}$ & & $\frac 3{7}$ & & $\frac 5{14}$  &  & $-\frac 3{7}$ & & $-\frac 2{7}$ & \ \\
					\hline
				\end{tabular}
			\end{table}
			
			\item[$\bullet$] $c'\equiv 5\Mod 7$. $A\cdot 3=7T+1$, $\Arg(Q\rightarrow P;3)=\frac 3{14}$; 
			\begin{table}[!htbp]
				\centering
				\begin{tabular}{|c|ccccccccccccc|}
					\hline
					$c'\equiv 5\Mod 7$ & $d_1$ & $\rightarrow$ & $d_2$ & $\rightarrow$ & $d_3$ & $\rightarrow$ & $d_4$ & $\rightarrow$ & $d_5$ & $\rightarrow$ & $d_6$ & $\rightarrow$ & $d_1$\\
					\hline
					$\Arg(d_u\rightarrow d_v;1)$  & & $\frac 5{14}$ & & $\frac 2{7}$ & & $\frac 1{7}$ & & $\frac 2{7}$ &  & $\frac 5{14}$ & & $-\frac 3{7}$ & \ \\
					[1ex]
					$\Arg(d_u\rightarrow d_v;2)$ & & $\frac 3{14}$ & & $\frac 1{14}$ & & $\frac 2{7}$ & & $\frac 1{14}$  &  & $\frac 3{14}$ & & $\frac 1{7}$ & \ \\
					[1ex]
					$\Arg(d_u\rightarrow d_v;3)$ & & $\frac 1{7}$ & & $\frac 3{14}$ & & $-\frac 1{7}$ & & $\frac 3{14}$  &  & $\frac 1{7}$ & & $\frac 3{7}$ & \ \\
					\hline
				\end{tabular}
			\end{table}
			
			\item[$\bullet$] $c'\equiv 6\Mod 7$. $A\cdot 1=7T-1$, $\Arg(Q\rightarrow P;1)=\frac 37$. 
			\begin{table}[!htbp]
				\centering
				\begin{tabular}{|c|ccccccccccccc|}
					\hline
					$c'\equiv 6\Mod 7$ & $d_1$ & $\rightarrow$ & $d_2$ & $\rightarrow$ & $d_3$ & $\rightarrow$ & $d_4$ & $\rightarrow$ & $d_5$ & $\rightarrow$ & $d_6$ & $\rightarrow$ & $d_1$\\
					\hline
					$\Arg(d_u\rightarrow d_v;1)$  & & $-\frac 3{14}$ & & $\frac 3{7}$ & & $-\frac 2{7}$ & & $\frac 3{7}$ &  & $-\frac 3{14}$ & & $-\frac 1{7}$ & \ \\
					[1ex]
					$\Arg(d_u\rightarrow d_v;2)$ & & $\frac 3{14}$ & & $\frac 1{14}$ & & $\frac 2{7}$ & & $\frac 1{14}$  &  & $\frac 3{14}$ & & $\frac 1{7}$ & \ \\
					[1ex]
					$\Arg(d_u\rightarrow d_v;3)$ & & $\frac 3{7}$ & & $-\frac 5{14}$ & & $-\frac 3{7}$ & & $-\frac 5{14}$  &  & $\frac 3{7}$ & & $\frac 2{7}$ & \ \\
					\hline
				\end{tabular}
			\end{table}
		\end{enumerate}
	\end{condition}

	
	We only prove (7-0) of Theorem~\ref{Kloosterman sums vanish} and omit the proof of the other ($7$-$k$) cases because the proofs are essentially the same.

	\subsection{(7-0) of Theorem~\ref{Kloosterman sums vanish}}
	
	As \eqref{Vrc sum equals 0, mod 7}, we still denote $V(r,c)=\{d\Mod c^*: d\equiv r\Mod c'\}$, $d_j\in V(r,c)$ by $d_j\equiv j\Mod 7$. Recall the Kloosterman sums defined at \eqref{S infty infty for mod p}, \eqref{S 0 infty mod 7, +1} and \eqref{S 0 infty mod 7, -1}.

	For $A\defeq c'=c/7$, when $A\ell=7T+1$ for some integer $T\geq 0$, as \eqref{Q B} we define 
	\begin{equation}
		Q(B)\defeq 2\sqrt 7Q_1(B)Q_2(B)Q_3(B)
	\end{equation} 
	with $Q_1(B)\defeq (-1)^{[A\ell]}i$ and
	\begin{equation}\label{Q B 7n, +1}
		Q_2(B)\defeq e\(\frac{(\frac 32 T^2+\frac 12 T)C}A\),\quad \text{and}\quad Q_3(B)=e\(\frac{0\cdot B}A\)=1
	\end{equation}
	and let $B=-d_1T$ with $C=-7\overline{d_{1\{A\}}}$. When $A\ell=7T-1$ for some $T\geq 0$, we still define $Q(B)$ as above while we take 
	\begin{equation}\label{Q B 7n, -1}
		Q_2(B)\defeq e\(\frac{(\frac 32 (T-1)^2+\frac 52 T+1)C}A\),\quad \text{and}\quad B=d_1T. 
	\end{equation}
	instead. Note that when $A$ is fixed, $\ell$ is also fixed, i.e. there is only one corresponding $Q(B)$ for every fixed $c$.

	We define the sum on $V(r,c)$ as
	\begin{align}{\label{Vrc sum equals 0, 7n}}
		\begin{split}
			s_{r,c}^{(\ell)}&\defeq \sin(\tfrac{\pi\ell}7)\sum_{d\in V(r,c)}P_1(d)P_2(d)P_3(d)+\sin(\tfrac{\pi\ell}7) \mathbf{1}_{\substack{A\defeq c/7 \\ {[A\ell]=1,6}}} Q(B),\quad\text{where}\\
			P_1(d)&\defeq \frac{(-1)^{\ell c}e\(-\frac{3c'a\ell^2}{14}\)}{\sin(\frac{\pi a\ell}{7})},\quad P_2(d)\defeq e\(-\frac{12 cs(d,c)}{24c}\),\quad  P_3(d)\defeq e\(\frac{0\cdot d}c\)=1. 
		\end{split}
	\end{align}
	Here $\textbf{1}_{\text{condition}}$ equals $1$ if the condition meets and equals $0$ otherwise. 
	
	To prove (7-0) of Theorem~\ref{Kloosterman sums vanish}, it suffices to show
	\begin{equation}\label{src sum to 0, 7n case}
		C_7^{4,6}s_{r,c}^{(1)}+C_7^{6,2}s_{r,c}^{(2)}+C_7^{2,4}s_{r,c}^{(3)} =0. 
	\end{equation}

	First we deal with the case $49|c$ and there is no $Q(B)$. We need to subtract $\frac 57$ from \eqref{Arg diff 7n+5 case 49|c, last section using} and get
	\begin{equation*}
		\Arg(d\rightarrow d_*;\ell)=\left\{\begin{array}{ll}
			0 &\ d\equiv 1,6\Mod 7;\vspace{1ex}\\
			-\frac 27 &\ d\equiv 2,5\Mod 7;\vspace{1ex}\\ 
			\frac 17 &\ d\equiv 3,4\Mod 7. 
		\end{array}
		\right.
	\end{equation*} 
	When $r\equiv d\equiv 2,3,4,5\Mod 7$, we get equi-distribution and \eqref{src sum to 0, 7n case} follows. When $r\equiv d\equiv 1,6\Mod 7$, note that $P_1(d)=(-1)^{(a+1)c\ell}/\sin(\frac{\pi a \ell}7)$ for $ad\equiv 1\Mod c$ has the same $\sgn P_1(d)$ for $\ell=1,2,3$. Hence every summand for $d\in V(r,c)$ in \eqref{src sum to 0, 7n case} has the same argument and we get \eqref{src sum to 0, 7n case} by
	\[C_7^{4,6}\frac{\sin(\frac{\pi}7)}{\sin(\frac{\pi}7)}+C_7^{6,2}\frac{\sin(\frac{2\pi}7)}{\sin(\frac{2\pi}7)}+C_7^{2,4}\frac{\sin(\frac{3\pi}7)}{\sin(\frac{3\pi}7)}=0. \]

	Next we check the condition for $7\|c$. 
	Comparing with Condition~\ref{Styles six points mod 7 7n+5}, since we have different $P_3(d)$ and $Q_3(B)$ in this case, we need to subtract $\frac{5\beta}7$ in $\Arg(d_j\rightarrow d_{j+1};\ell)$, $1\leq j\leq 5$ from Condition~\ref{Styles six points mod 7 7n+5}. We also need to add $\mp\frac{5\ell}7$ to $\Arg(Q\rightarrow P;\ell)$ when $A\ell\equiv \pm 1\Mod 7$. It is important to note that we compute $\Arg(d_6\rightarrow d_1;\ell)$ by
	\[\sum_{j=1}^5\Arg(d_j\rightarrow d_{j+1};\ell)+\Arg(d_6\rightarrow d_1;\ell)=0\]
	instead of adding $\mp\frac{5\ell}7$. 
	\begin{condition}\label{Styles six points mod 7 7n}
		For the $7n$ case, we have the following conditions on $\Arg(Q\rightarrow P;\ell)$ when $A\ell\equiv \pm 1\Mod 7$ with tables for $\Arg_j(d_u\rightarrow d_v;\ell)$.  
		\begin{enumerate}
			\item[$\bullet$] $c'\equiv 1\Mod 7$, $\beta =1$. $A\cdot 1=7T+1$, $\Arg(Q\rightarrow P;1)=-\frac 17$; 
			\begin{table}[!htbp]
				\centering
				\begin{tabular}{|c|ccccccccccccc|}
					\hline
					$c'\equiv 1\Mod 7$ & $d_1$ & $\rightarrow$ & $d_2$ & $\rightarrow$ & $d_3$ & $\rightarrow$ & $d_4$ & $\rightarrow$ & $d_5$ & $\rightarrow$ & $d_6$ & $\rightarrow$ & $d_1$\\
					\hline
					$\Arg(d_u\rightarrow d_v;1)$  & & $\frac 1{2}$ & & $-\frac 1{7}$ & & $-\frac 3{7}$ & & $-\frac 1{7}$ &  & $\frac 1{2}$ & & $-\frac 2{7}$ & \ \\
					[1ex]
					$\Arg(d_u\rightarrow d_v;2)$ & & $\frac 1{14}$ & & $\frac 3{14}$ & & $0$ & & $\frac 3{14}$  &  & $\frac 1{14}$ & & $\frac 3{7}$ & \ \\
					[1ex]
					$\Arg(d_u\rightarrow d_v;3)$ & & $-\frac 				1{7}$ & & $-\frac 5{14}$ & & $-\frac 2{7}$ & & $-\frac 5{14}$  &  & $-\frac 1{7}$ & & $\frac 2{7}$ & \ \\
					\hline
				\end{tabular}
			\end{table}

			\item[$\bullet$] $c'\equiv 2\Mod 7$, $\beta=4$. $A\cdot 3=7T-1$, $\Arg(Q\rightarrow P;3)=-\frac 1{14}$;  
			\begin{table}[!htbp]
				\centering
				\begin{tabular}{|c|ccccccccccccc|}
					\hline
					$c'\equiv 2\Mod 7$ & $d_1$ & $\rightarrow$ & $d_2$ & $\rightarrow$ & $d_3$ & $\rightarrow$ & $d_4$ & $\rightarrow$ & $d_5$ & $\rightarrow$ & $d_6$ & $\rightarrow$ & $d_1$\\
					\hline
					$\Arg(d_u\rightarrow d_v;1)$  & & $-\frac 3{14}$ & & $-\frac 1{7}$ & & $0$ & & $-\frac 1{7}$ &  & $-\frac 3{14}$ & & $-\frac 2{7}$ & \ \\
					[1ex]
					$\Arg(d_u\rightarrow d_v;2)$ & & $-\frac 1{14}$ & & $\frac 1{14}$ & & $-\frac 1{7}$ & & $\frac 1{14}$  &  & $-\frac 1{14}$ & & $\frac 1{7}$ & \ \\
					[1ex]
					$\Arg(d_u\rightarrow d_v;3)$ & & $0$ & & $-\frac 1{14}$ & & $\frac 2{7}$ & & $-\frac 1{14}$  &  & $0$ & & $-\frac 1{7}$ & \ \\
					\hline
				\end{tabular}
			\end{table}
			
			\item[$\bullet$] $c'\equiv 3\Mod 7$. $A\cdot2=7T-1$, $\Arg(Q\rightarrow P;2)=-\frac 3{14}$; 
			\begin{table}[!htbp]
				\centering
				\begin{tabular}{|c|ccccccccccccc|}
					\hline
					$c'\equiv 3\Mod 7$ & $d_1$ & $\rightarrow$ & $d_2$ & $\rightarrow$ & $d_3$ & $\rightarrow$ & $d_4$ & $\rightarrow$ & $d_5$ & $\rightarrow$ & $d_6$ & $\rightarrow$ & $d_1$\\
					\hline
					$\Arg(d_u\rightarrow d_v;1)$  & & $-\frac 3{14}$ & & $-\frac 2{7}$ & & $-\frac 3{7}$ & & $-\frac 2{7}$ &  & $-\frac 3{14}$ & & $\frac 3{7}$ & \ \\
					[1ex]
					$\Arg(d_u\rightarrow d_v;2)$ & & $\frac 1{2}$ & & $-\frac 3{14}$ & & $-\frac 1{7}$ & & $-\frac 3{14}$  &  & $\frac 1{2}$ & & $-\frac 3{7}$ & \ \\
					[1ex]
					$\Arg(d_u\rightarrow d_v;3)$ & & $-\frac1{7}$ & & $\frac 1{14}$ & & $0$ & & $\frac 1{14}$  &  & $-\frac 1{7}$ & & $\frac 1{7}$ & \ \\
					\hline
				\end{tabular}
			\end{table}
			
			\item[$\bullet$] $c'\equiv 4\Mod 7$, $\beta =2$. $A\cdot 2=7T+1$, $\Arg(Q\rightarrow P;2)=\frac 3{14}$; 
			\begin{table}[!htbp]
				\centering
				\begin{tabular}{|c|ccccccccccccc|}
					\hline
					$c'\equiv 4\Mod 7$ & $d_1$ & $\rightarrow$ & $d_2$ & $\rightarrow$ & $d_3$ & $\rightarrow$ & $d_4$ & $\rightarrow$ & $d_5$ & $\rightarrow$ & $d_6$ & $\rightarrow$ & $d_1$\\
					\hline
					$\Arg(d_u\rightarrow d_v;1)$  & & $\frac 3{14}$ & & $\frac 2{7}$ & & $\frac 3{7}$ & & $\frac 2{7}$ &  & $\frac 3{14}$ & & $-\frac 3{7}$ & \ \\
					[1ex]
					$\Arg(d_u\rightarrow d_v;2)$ & & $\frac 1{2}$ & & $\frac 3{14}$ & & $\frac 1{7}$ & & $\frac 3{14}$  &  & $\frac 1{2}$ & & $\frac 3{7}$ & \ \\
					[1ex]
					$\Arg(d_u\rightarrow d_v;3)$ & & $\frac1{7}$ & & $-\frac 1{14}$ & & $0$ & & $-\frac 1{14}$  &  & $\frac 1{7}$ & & $-\frac 1{7}$ & \ \\
					\hline
				\end{tabular}
			\end{table}
			
			\item[$\bullet$] $c'\equiv 5\Mod 7$, $\beta =3$. $A\cdot 3=7T+1$, $\Arg(Q\rightarrow P;3)=\frac 1{14}$; 
			\begin{table}[!htbp]
				\centering
				\begin{tabular}{|c|ccccccccccccc|}
					\hline
					$c'\equiv 5\Mod 7$ & $d_1$ & $\rightarrow$ & $d_2$ & $\rightarrow$ & $d_3$ & $\rightarrow$ & $d_4$ & $\rightarrow$ & $d_5$ & $\rightarrow$ & $d_6$ & $\rightarrow$ & $d_1$\\
					\hline
					$\Arg(d_u\rightarrow d_v;1)$  & & $\frac 3{14}$ & & $\frac 1{7}$ & & $0$ & & $\frac 1{7}$ &  & $\frac 3{14}$ & & $\frac 2{7}$ & \ \\
					[1ex]
					$\Arg(d_u\rightarrow d_v;2)$ & & $\frac 1{14}$ & & $-\frac 1{14}$ & & $\frac 1{7}$ & & $-\frac 1{14}$  &  & $\frac 1{14}$ & & $-\frac 1{7}$ & \ \\
					[1ex]
					$\Arg(d_u\rightarrow d_v;3)$ & & $0$ & & $\frac 1{14}$ & & $-\frac 2{7}$ & & $\frac 1{14}$  &  & $0$ & & $\frac 1{7}$ & \ \\
					\hline
				\end{tabular}
			\end{table}
			
			\item[$\bullet$] $c'\equiv 6\Mod 7$, $\beta =6$. $A\cdot 1=7T-1$, $\Arg(Q\rightarrow P;1)=\frac 17$.  
			\begin{table}[!htbp]
				\centering
				\begin{tabular}{|c|ccccccccccccc|}
					\hline
					$c'\equiv 6\Mod 7$ & $d_1$ & $\rightarrow$ & $d_2$ & $\rightarrow$ & $d_3$ & $\rightarrow$ & $d_4$ & $\rightarrow$ & $d_5$ & $\rightarrow$ & $d_6$ & $\rightarrow$ & $d_1$\\
					\hline
					$\Arg(d_u\rightarrow d_v;1)$  & & $\frac 1{2}$ & & $\frac 1{7}$ & & $\frac 3{7}$ & & $\frac 1{7}$ &  & $\frac 1{2}$ & & $\frac 2{7}$ & \ \\
					[1ex]
					$\Arg(d_u\rightarrow d_v;2)$ & & $-\frac 1{14}$ & & $-\frac 3{14}$ & & $0$ & & $-\frac 3{14}$  &  & $-\frac 1{14}$ & & $-\frac 3{7}$ & \ \\
					[1ex]
					$\Arg(d_u\rightarrow d_v;3)$ & & $\frac 1{7}$ & & $\frac 5{14}$ & & $\frac 2{7}$ & & $\frac 5{14}$  &  & $\frac 1{7}$ & & $-\frac 2{7}$ & \ \\
					\hline
				\end{tabular}
			\end{table}
		\end{enumerate}
	\end{condition}
	Note that the condition for $c'\Mod 7$ is the same as the reversed condition for $-c'\Mod 7$. Hence we only need show the corresponding graphs for $c'\equiv 1,2,3\Mod 7$, and also for the other $7n+k$ cases in the remaining subsections. In each of the following graphs, if $d_u$ and $d_v$ are not shown, then $P(d_u)=P(d_v)$ are both the remaining non-labeled point.

	\begin{center}
		\includegraphics[scale=0.32]{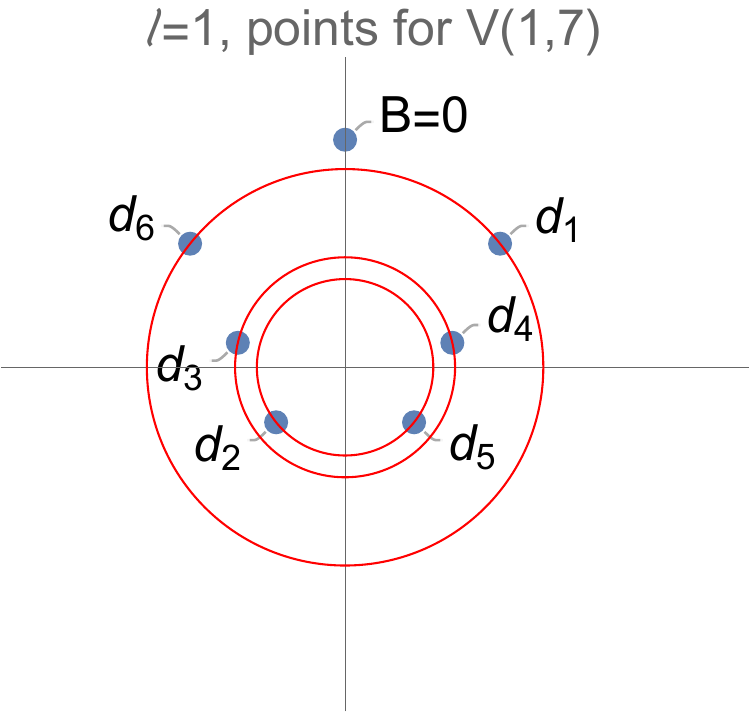}
		\includegraphics[scale=0.32]{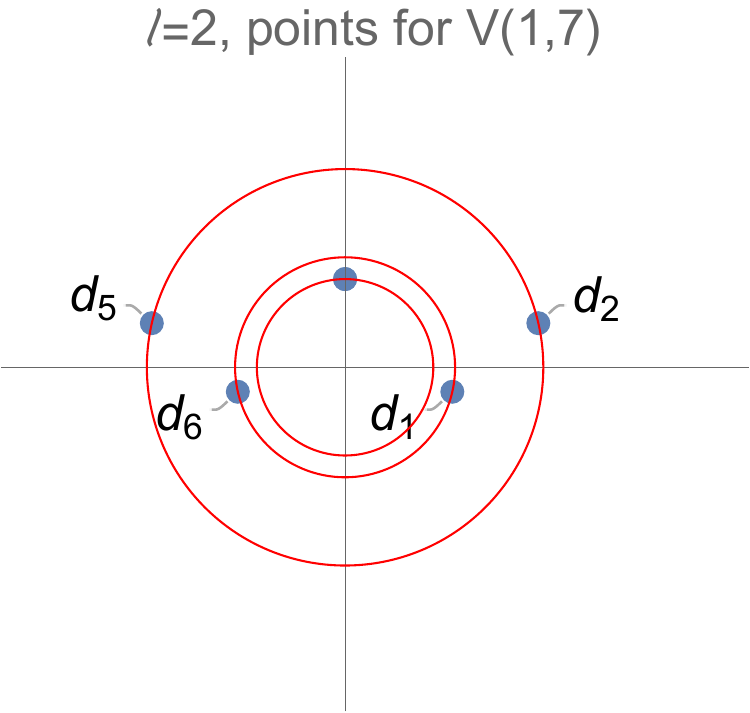}
		\includegraphics[scale=0.32]{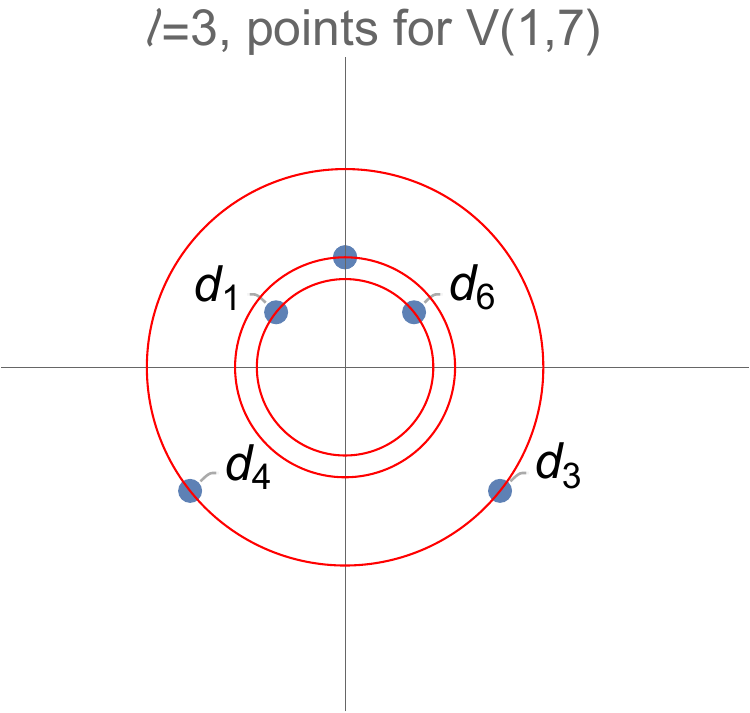}
		\\
		\includegraphics[scale=0.32]{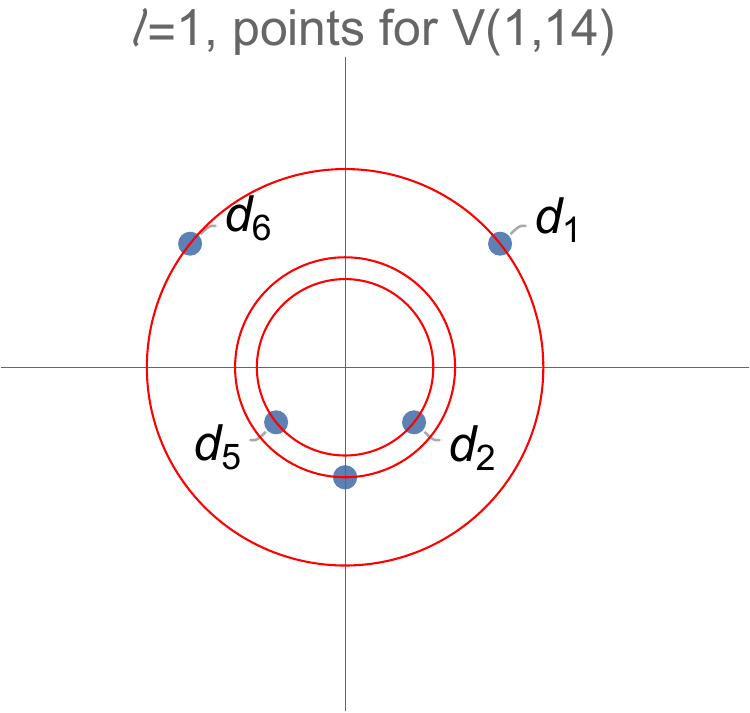}
		\includegraphics[scale=0.32]{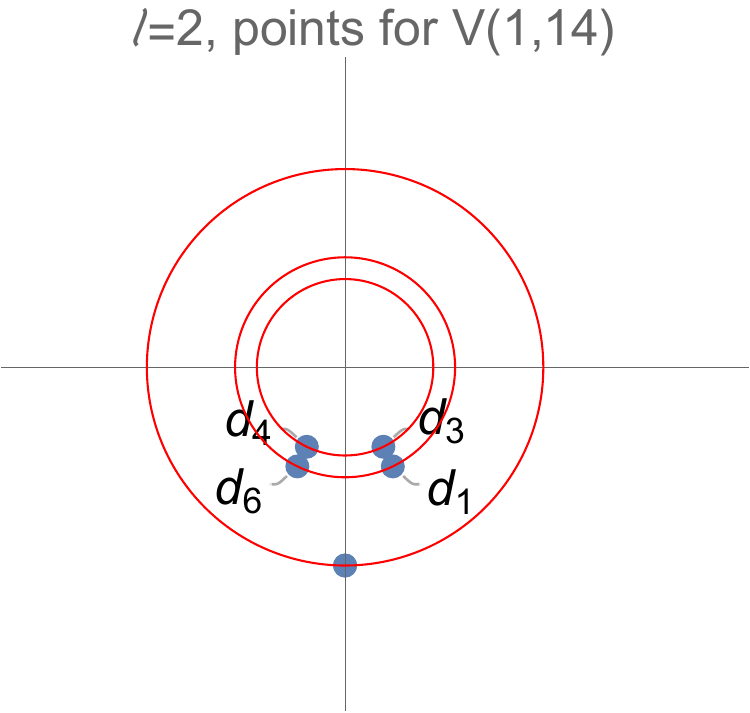}
		\includegraphics[scale=0.32]{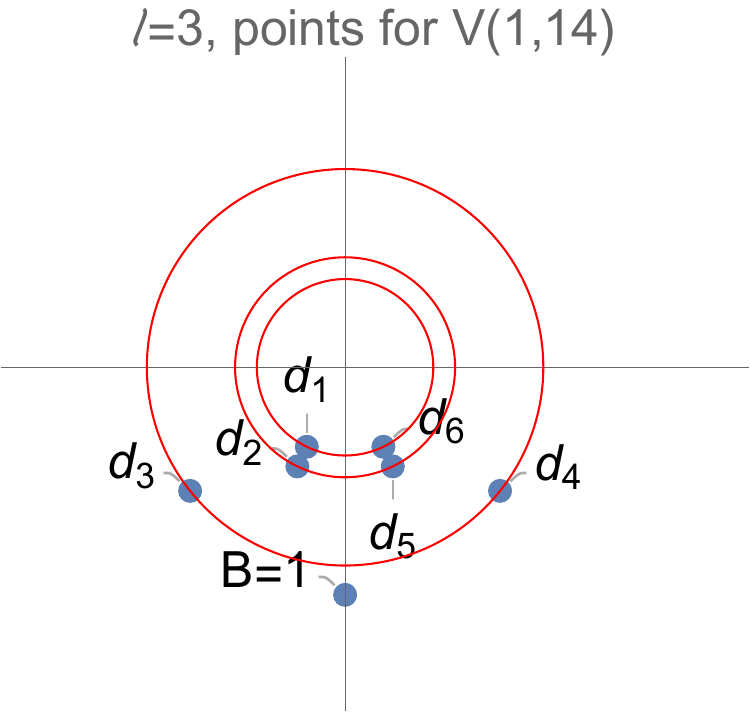}
		\\
		\includegraphics[scale=0.32]{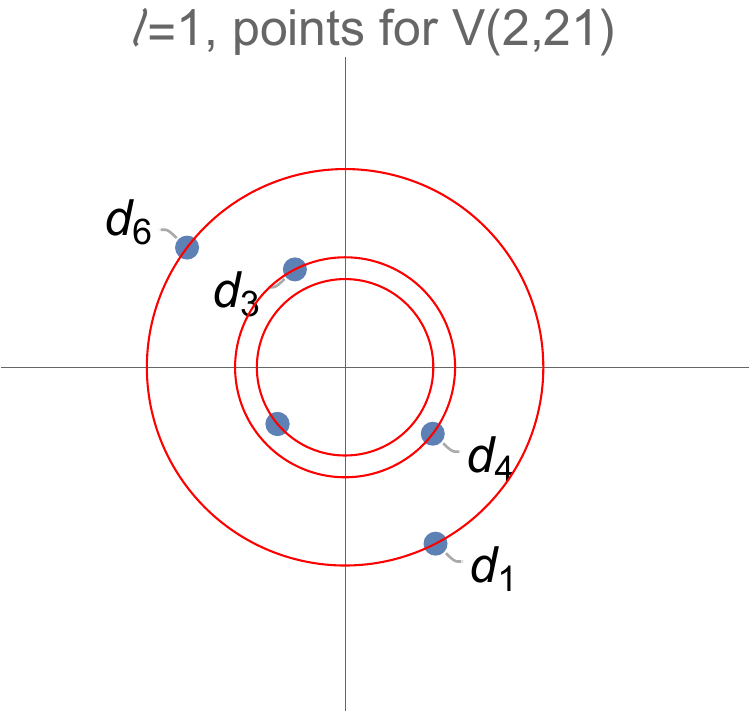}
		\includegraphics[scale=0.32]{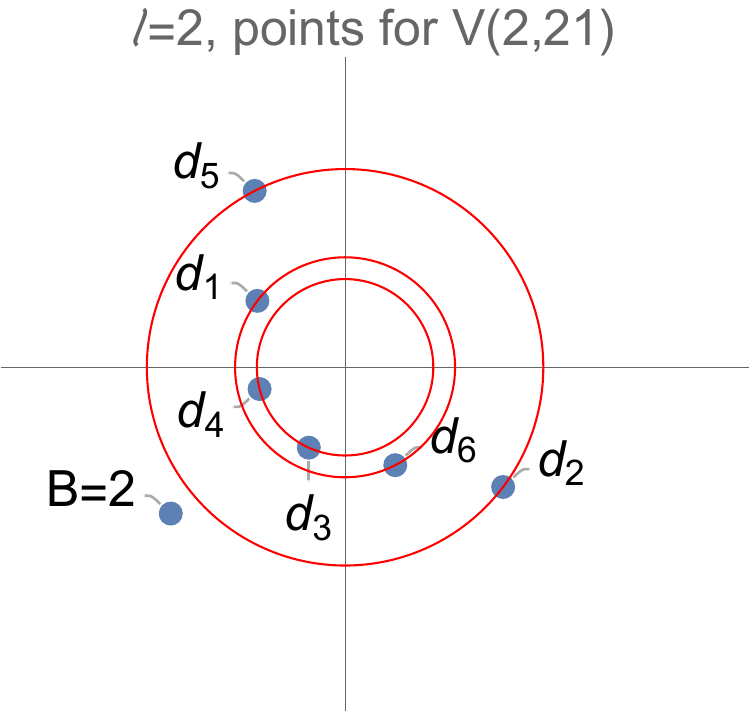}
		\includegraphics[scale=0.32]{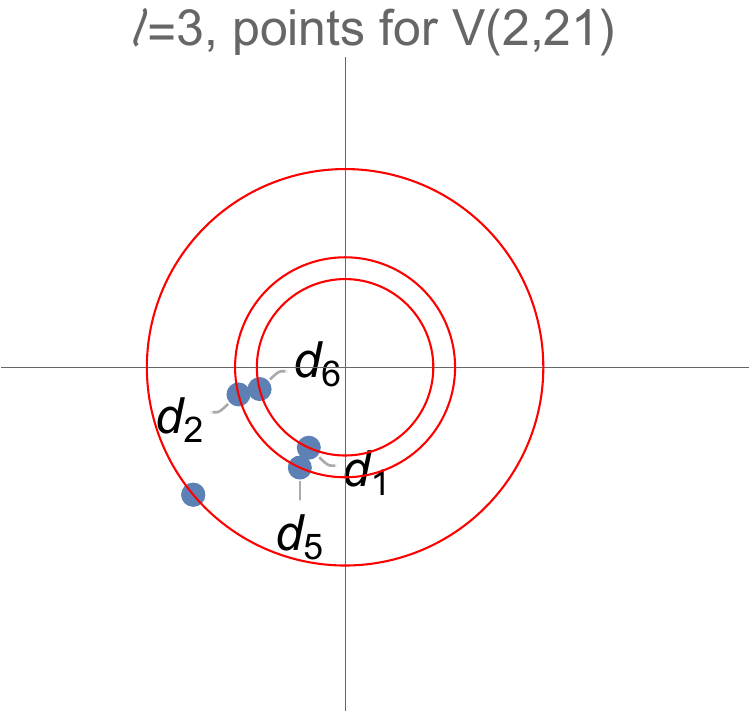}
	\end{center}
	
	Visualizing by the above graphs, \eqref{src sum to 0, 7n case} is proved by the following equations: 
	\begin{align*}
		C_7^{4,6}\sin(\tfrac{\pi}7)&\(\frac{\cos(\frac{2\pi}7)}{\sin(\frac {\pi}7)}+\frac{\cos(\frac{3\pi}7)}{\sin(\frac {2\pi}7)}-\frac{\cos(\frac{2\pi}7)}{\sin(\frac {3\pi}7)}\)+C_7^{6,2}\sin(\tfrac{2\pi}7)\(\frac{\cos(\frac{3\pi}7)}{\sin(\frac {\pi}7)}-\frac{\cos(\frac{3\pi}7)}{\sin(\frac {2\pi}7)}+\frac{1}{\sin(\frac {3\pi}7)}\)\\
		&+C_7^{2,4}\sin(\tfrac{3\pi}7)\(-\frac{\cos(\frac{2\pi}7)}{\sin(\frac {\pi}7)}+\frac{1}{\sin(\frac {2\pi}7)}+\frac{\cos(\frac{2\pi}7)}{\sin(\frac {3\pi}7)}\)=-C_7^{4,6}\sin(\tfrac{\pi}7)\sqrt 7,\\	
		C_7^{4,6}\sin(\tfrac{\pi}7)&\(-\frac{\cos(\frac{2\pi}7)}{\sin(\frac {\pi}7)}+\frac{1}{\sin(\frac {2\pi}7)}+\frac{\cos(\frac{2\pi}7)}{\sin(\frac {3\pi}7)}\)+C_7^{6,2}\sin(\tfrac{2\pi}7)\(\frac{1}{\sin(\frac {\pi}7)}+\frac{\cos(\frac{\pi}7)}{\sin(\frac {2\pi}7)}+\frac{\cos(\frac{\pi}7)}{\sin(\frac {3\pi}7)}\)\\
		&+C_7^{2,4}\sin(\tfrac{3\pi}7)\(\frac{\cos(\frac{2\pi}7)}{\sin(\frac {\pi}7)}+\frac{\cos(\frac{\pi}7)}{\sin(\frac {2\pi}7)}+\frac{\cos(\frac{\pi}7)}{\sin(\frac {3\pi}7)}\)=-C_7^{2,4}\sin(\tfrac{3\pi}7)\sqrt 7,\\	
		C_7^{4,6}\sin(\tfrac{\pi}7)&\(\frac{\cos(\frac{3\pi}7)}{\sin(\frac {\pi}7)}-\frac{\cos(\frac{3\pi}7)}{\sin(\frac {2\pi}7)}+\frac{1}{\sin(\frac {3\pi}7)}\)+C_7^{6,2}\sin(\tfrac{2\pi}7)\(-\frac{\cos(\frac{3\pi}7)}{\sin(\frac {\pi}7)}+\frac{\cos(\frac{3\pi}7)}{\sin(\frac {2\pi}7)}+\frac{\cos(\frac{\pi}7)}{\sin(\frac {3\pi}7)}\)\\
		&+C_7^{2,4}\sin(\tfrac{3\pi}7)\(\frac{1}{\sin(\frac {\pi}7)}+\frac{\cos(\frac{\pi}7)}{\sin(\frac {2\pi}7)}+\frac{\cos(\frac{\pi}7)}{\sin(\frac {3\pi}7)}\)=-C_7^{6,2}\sin(\tfrac{2\pi}7)\sqrt 7. \\	
	\end{align*} 
	This proves (7-0) of Theorem~\ref{Kloosterman sums vanish}. 
We omit the similar proofs of (7-$k$) for $k\in \{1,2,3,4,6\}$ here, which can be found in \cite{QihangKLsumsGitHub}.

	\section*{Acknowledgement}
	The author extends sincere thanks to the referee for their careful review and valuable
    comments. The author thanks Professor Scott Ahlgren for his careful reading in a previous version of this paper and for his plenty of insightful suggestions. The author also thanks Nick Andersen and Alexander Dunn for helpful comments.

	\bibliographystyle{alpha}
	\bibliography{allrefs}

\end{document}